\newcommand{\G}{\Gamma}
\newcommand{\Z}{\mathbb{Z}}
\newcommand{\N}{\mathbb{N}}
\newcommand{\ff}{\mathbb{F}}
\newcommand{\sk}{\smallskip}
\newcommand{\msk}{\medskip}
\newtheorem{thm}{Theorem}[section]
\newtheorem{prop}[thm]{Proposition}
\newtheorem{lem}[thm]{Lemma}
\newtheorem{coro}[thm]{Corollary}
\theoremstyle{definition}
\newtheorem{rem}[thm]{Remark}
\newtheorem{exam}[thm]{Example}
\theoremstyle{remark}
\begin{document} \sloppy
\numberwithin{equation}{section}
\title{Waring numbers over finite commutative local rings} 
\author[R.A.\@ Podest\'a, D.E.\@ Videla]{Ricardo A.\@ Podest\'a, Denis E.\@ Videla}
\dedicatory{\today}
\keywords{Waring numbers, local rings, generalized Paley graphs, Waring graphs, diameter}
\thanks{2020 {\it Mathematics Subject Classification.} Primary 11P05;\, Secondary 05C12, 05C25, 11A07, 13H99.}
\thanks{Partially supported by CONICET, FONCyT and SECyT-UNC}

\address{Ricardo A.\@ Podest\'a, FaMAF -- CIEM (CONICET), Universidad Nacional de C\'ordoba, \newline
	Av.\@ Medina Allende 2144, Ciudad Universitaria, (5000) C\'ordoba, Argentina. 
	\newline {\it E-mail: podesta@famaf.unc.edu.ar}}
\address{Denis E.\@ Videla, FaMAF -- CIEM (CONICET), Universidad Nacional de C\'ordoba, \newline
	Av.\@ Medina Allende 2144, Ciudad Universitaria,  (5000) C\'ordoba, Argentina. 
	\newline {\it E-mail: devidela@famaf.unc.edu.ar}}

\begin{abstract}
In this paper we study Waring numbers $g_R(k)$ for $(R,\frak m)$ a finite commutative local ring with identity and $k \in \N$ with $(k,|R|)=1$. 
We first relate the Waring number $g_R(k)$ with the diameter of the Cayley graphs 
$G_R(k)=Cay(R,U_R(k))$ and $W_R(k)=Cay(R,S_R(k))$ with $U_R(k)=\{x^k : x\in R^*\}$ and $S_R(k)=\{x^k : x\in R^\times\}$, distinguishing the cases where the graphs are directed or undirected. 
We show that in both cases (directed or undirected), the graph $G_R(k)$ can be obtained by blowing-up the vertices of $G_{\ff_{q}}(k)$ a number $|\frak{m}|$ of times, 
with independence sets the cosets of $\frak{m}$, where $q$ is the size of the residue field $R/\frak m$.
Then, by using the above blowing-up, 
we reduce the study of the Waring number $g_R(k)$ over the local ring $R$ to the computation of the Waring number $g(k,q)$ over the finite residue field $R/\frak m \simeq \ff_q$.
In this way, using known results for Waring numbers over finite fields, we obtain several explicit results for Waring numbers over finite commutative local rings with identity.
\end{abstract}

\maketitle

\section{Introduction}
In this work we study the Waring numbers $g_R(k)$ over finite commutative local rings $R$ with identity, where $(k,|R|)=1$, through the computation of the diameter of certain Cayley graphs with vertex set $R$ and connection set defined by $k$-th powers in $R$. 
The case when $k$ and $|R|$ are not coprime remains open. 
We will reduce the computation of Waring numbers $g_R(k)$ over finite commutative local rings $(R,\frak m)$ to the more known problem of computing the Waring numbers $g(k,q)$ over finite fields $\ff_q$. More precisely, we will show that $g_R(k)$ can be put in terms of $g(k,q)$, where the residue field $R/\frak m$ is isomorphic to $\ff_q$.

\subsection{Preliminaries}
Here we recall the basic facts and definitions of Cayley graphs over rings and Waring numbers over finite rings.

\subsubsection*{Cayley graphs over rings}
Let $G$ be a finite abelian group and $S$ a subset of $G$ with $0\notin S$. The \textit{Cayley graph } $Cay(G,S)$ is the directed graph whose vertex set is $G$ and $v, w \in G$ form a directed edge (or arc) $\overrightarrow{vw}$ of $\Gamma$ from $v$ to $w$ if $w-v \in S$. Since $0\notin S$, $\Gamma$ has no loops. 
Notice that if $S$ is symmetric, that is $-S=S$, then we can consider $Cay(G,S)$ as undirected (and conversely), and hence $Cay(G,S)$ is $|S|$-regular. In fact, if $\overrightarrow{vw}$ is an arc then $\overrightarrow{wv}$ is an arc also, and hence we consider both arcs with opposite directions as a single undirected edge ${vw}$.

One interesting instance of these graphs is when $G$ is a finite field. In particular,
if we let $q=p^m$ with $p$ a prime number and $k$ a non-negative integer with $k\mid q-1$, 
the \textit{generalized Paley graph} (\textit{GP-graph} for short) is the Cayley graph
\begin{equation} \label{Gammas}
	\G(k,q) = Cay(\ff_{q},U_{k}) \qquad \text{with} \qquad U_k = \{ x^{k} : x \in \ff_{q}^*\}
\end{equation} 
where $\ff_q^* = \ff_q \smallsetminus \{0\}$.
Notice that $\G(k,q)$ is an $n$-regular graph with $n=\tfrac{q-1}k$. 
The graph $\G(k,q)$ is undirected either if $q$ is even or if $k$ divides $\tfrac{q-1}2$ when $p$ is odd (equivalently if $n$ is even when $p$ is odd) and it is connected if $n$ is a primitive divisor of $q-1$ (see  \eqref{prim div}).  
When $k=1$ we get the complete graph $\G(1,q)=K_q$ and when $k=2$ we get the classic Paley graph $\Gamma(2,q) = P(q)$. 

GP-graphs have been extensively studied in the few past years. 
Lim and Praeger studied their automorphism groups and characterized all GP-graphs which are Hamming graphs 
(\cite{LP}). 
In \cite{PP}, Pearce and Praeger characterized all GP-graphs which are Cartesian decomposable (some generalization 
to the directed case can be found in \cite{PV7}).
The number of walks in GP-graphs is related with the number of solutions of 
diagonal equations over finite fields (\cite{V}). 
Under some mild restrictions, the spectrum of GP-graphs determines the weight distribution of their associated irreducible
codes (\cite{PV2}, \cite{PV4}). In fact, using this relation between graphs and codes, the spectrum and some spectral properties of the graphs $\G(3,q)$ and $\G(4,q)$ were recently obtained in \cite{PV8}.

Another important special case of Cayley graphs is obtained when $G$ is a finite commutative ring $R$ with identity and $S$ is its group of units $R^*$. That is 
$$G_R = Cay(R,R^*),$$
called the \textit{unitary Cayley graph}.
Unitary Cayley graphs were studied for instance in \cite{Ak+}, \cite{Il}, \cite{Ki+}, \cite{LZ} and \cite{PV5}. 
Recently, Liu and Zhou \cite{LZ2} defined and studied the \textit{quadratic unitary Cayley graphs} 
$\mathcal{G}_R = Cay(R,T_{R})$,
where $T_R=Q_{R}\cup (-Q_R)$ and $Q_{R}=\{x^2: x\in R^*\}$ with $R$ a finite commutative ring with identity. Under some conditions, they showed that in most of the cases these graphs are Kronecker products of classic Paley graphs $P(q)$ and the pseudograph $\mathring{K}_m$ obtained by attaching loops to all of the vertices of a complete graph (see Theorems 2.3 and 2.5 in \cite{LZ2}). This was shown by de Beaudrap \cite{Be} in the case $R=\mathbb{Z}_n$.
This decomposition allowed that author to determine the spectra, energy and other interesting properties of $\mathcal{G}_R$.

\subsubsection*{Waring numbers on finite fields and rings}
The classical problem introduced by Waring near 1770 for the integers $\Z$ can be considered more generally in the context of rings. The Waring's problem on an arbitrary ring $R$ asks, given a natural number $k$, what is the minimum $s\in \mathbb{N}$ 
such that the diagonal equation
	$$x_{1}^{k}+\cdots +x_{s}^{k}=r$$ 
has solutions for all $r\in R$. If this number exists, it is called the \textit{$k$-th Waring number of $R$} and it is denoted by $g_{R}(k)$. Clearly, $g_{\ff_{q}}(k)=g(k,q)$. 
 We are interested in Waring numbers over finite rings.

If $R=\ff_q$ is a finite field of $q$ elements
$g_{\ff_q}(k)$ is usually denoted by $g(k,q)$.
The number $g(k,q)$ not always exist. If $q=p^m$ then $g(k,q)$ exists if and only if $\tfrac{p^m-1}{p^d-1}\nmid k$ for any $d\mid m$ with $d\ne m$. When $g(k,q)$ exists, we have that 
	$$g(k,q)=g((k,q-1),q).$$ 
Since $g(1,q)=1$ trivially, it is customary to assume that $k\mid q-1$.
The study of Waring numbers in finite fields has a long history that can be traced back to Cauchy. After him, several authors studied these numbers. In the last 30 years, we can mention the works of Cipra, Cochrane and Pinner (\cite{Ci}, \cite{CiCP}, \cite{CP}), Garc\'ia-Sol\'e (\cite{GS}), Glibichuk et al.\@ (\cite{Gl}, \cite{GlR}), Kononen (\cite{KK}), Konyagin (\cite{Kon}), Moreno-Castro (\cite{MC1}, \cite{MC}), Winterhof et al.\@ (\cite{Win}, \cite{Win1}, \cite{Win2}) and recently Podestá-Videla (\cite{PV6}, \cite{PV7}).	
The connection between Waring numbers over arbitrary finite fields $\ff_q$ and GP-graphs was established in \cite{PV6} following ideas of Hamidoune and García-Solé (see \cite{GS}) in the case $\ff_p$ with $p$ prime.
Namely, if the GP-graph $\G(k,q)$ is connected (not necessarily undirected), 
then we have (see Theorem 3.3 in \cite{PV6})
\begin{equation} \label{g=d} 
	g(k,q) = \delta(\G(k,q)),
\end{equation}
where $\delta(\G)$ denotes the diameter of a graph $\G$.

The Waring problem over finite rings has received much less attention than over finite fields. 
It was probably initiated by Charles Small in 1977 studying the problem for $\Z_n$ (\cite{Sm}, \cite{Smb}, \cite{Smc}), the so called Waring's problem modulo $n$.
There are few works on these topics, even in the commutative case, although during the last 5 years some works have appeared.
In the non-commutative case, we mention Karabulut's paper \cite{K} where the asymptotic Waring problem over arbitrary finite rings is studied. Also, very recently, Kishore (\cite{K2}) and Kishore-Singh (\cite{K2b}) studied the Waring numbers over matrix rings over finite fields $M_n(\ff_q)$ improving the asymptotic results of Karabulut for the rings of matrices over finite fields. 
Namely, he proved Larsen's conjecture, i.e.\@ that for any $k\ge 1$, there is a constant $C_k$ depending only on $k$ such that for any $q>C_k$ and $n\ge 1$ every matrix in $M_n(\ff_q)$ can be written as the sum of two $k$-th powers in $M_n(\ff_q)$.

\subsection{New families of Cayley graphs}
Here we define two new families of Cayley graphs over rings whose connection sets are the $k$-th powers of units and $k$-th powers of non-zero elements, respectively. In what follows, let $k\in\mathbb{N}$ and let $R$ be a finite commutative ring (not necessarily local) with identity and denote by $R^*$ the units of $R$ and by $R^\times = R \smallsetminus \{0\}$. 

We define the \textit{unitary Cayley graph of $k$-th powers} or \textit{$k$-unitary Cayley graph} by 
\begin{equation} \label{GRk}
	G_R(k)= Cay(R,U_R(k)) \qquad \text{where} \qquad U_R(k) = \{x^k: x\in R^*\}.
\end{equation} 
Clearly, $G_R(1)$ is the unitary Cayley graph $G_R=Cay(R,R^*)$. 
In general, $G_{R}(k)$ is directed.
Notice that $U_R(k)$ is a symmetric set if and only if $-1\in U_R(k)$. 
They are generalizations of generalized Paley graphs over finite fields to commutative rings with identity. 
In particular, when $R$ is a finite field $\ff_q$ of cardinality $q=p^n$ then $G_R(k)$ is the generalized Paley graph
\begin{equation} \label{GPkq}
	G_{\ff_q}(k) = \G(k,q) 
\end{equation}
defined in \eqref{Gammas}.
See \cite{PV2} (also \cite{PV4}, \cite{PV5}) for more information and results on the graphs $\G(k,q)$.

We also define the \textit{Waring graph of $k$-th powers over $R$} as
\begin{equation} \label{WRk}
	W_R(k) = Cay(R, S_R(k)) \qquad \text{where} \qquad S_R(k) = \{x^k: x\in R^{\times}\}.
\end{equation}
The difference with $G_R(k)$ is that now powers of nilpotent elements are allowed in the connection set. 
In general, $G_{R}(k)$ and $W_{R}(k)$ are directed graphs. 
The graph $W_{R}(k)$ may have loops (for instance if $R$ has nilpotent elements of degree less than $k$). 
In any case, since $U_R(k) \subset S_R(k)$, in fact $U_R(k) = S_R(k) \cap R^*$, we have that $G_{R}(k)$ is a subgraph of $W_{R}(k)$. Furthermore, $G_{R}(k)$ is undirected if and only if $W_{R}(k)$ is undirected as well, since if $-1 \in R^*$ we have 
$-1 \in U_R(k)$ if and only if $-1\in S_R(k)$.
Notice that, if $G_{R}(k)$ is connected, then $W_{R}(k)$ is also connected.
When $R=\ff_q$ is a finite field of cardinality $q=p^n$ with $p$ prime, then 
\begin{equation} \label{W=G para R=Fq}
	W_{\ff_q}(k) = G_{\ff_q}(k).  
\end{equation}

\subsection{Outline and main results}
In what follows let $R$ be a finite commutative ring with identity and let $k\in \N$ be coprime with $|R|$. We now give a brief account of the main results in the paper. 
 
In Section \ref{sec2} we study the structure of the graph $G_R(k)$ for $R$ a local ring with unique maximal ideal $\frak m$.
In Theorem \ref{Local case} we give the Kronecker product decomposition of $G_R(k)$, namely $G_R(k) \simeq G_{\ff_q}(k) \otimes \mathring{K}_{m}$, where $\mathring{K}_{m}$ is the complete graph of $m$ vertices with a loop added at every vertex and $m=|\frak m|$.
Also, we show that $G_{R}(k)$ is a balanced blow-up of order $m$, more precisely $G_R(k) \simeq {\G(k,q)}^{(m)}$, whose independent sets are all the cosets of $\frak m$ in $R$. In Corollary~\ref{-1RFq} we give arithmetic conditions for these graphs to be undirected or connected.

In Section \ref{sec3} we study the relation between the Waring number $g_R(k)$ and the diameter of the graphs $G_R(k)$ and $W_R(k)$, with $R$ not necessarily local.
In Theorem 2.1 of \cite{PV6} we showed that the Waring number $g(k,q)$ over the finite field $\ff_q$ equals the diameter of the generalized Paley graph $\G(k,q)$. In Theorem \ref{eqbound} we extend this result to any (not necessarily finite) commutative ring with identity.
In Proposition \ref{lem1} we compute the diameter $\delta (\G\otimes \mathring{K}_m)$ for any graph $\G$, distinguishing the cases when $\G$ is directed or undirected. In the directed case, the directed girth of a graph plays a role.  	
	
In Section \ref{sec4} we study Waring numbers over a finite commutative local ring with identity $(R,\frak m)$. 
In Theorem \ref{teo waring Local case}, one of the main results, we show that if 
$(k,|R|)=1$ then, under certain arithmetic conditions on $k$ and $q$, the number $g_R(k)$ exists and in this case it can be put in terms of $g(k,q)$, where $q$ is the size of the residue field, that is $R/\frak m \simeq \ff_q$. In fact, $g_R(k)$ can take the values $1,2$ or $g(k,q)$ if $G_R(k)$ is undirected and $g(k,q)$ or $g(k,q)+1$ if $G_R(k)$ is directed (see \eqref{gR local} and \eqref{gR local dir}). 

As a consequence, in Section \ref{sec5red} we give a reduction formula for Waring numbers over finite local rings (as in \cite{PV7} for finite fields) . In fact, we show in Theorem \ref{teo reduction} that under certain mild arithmetic conditions on $a,b,c,p \in \N$, with $p$ prime, 
the Waring numbers $g_{R_{ab}}(\tfrac{p^{ab}-1}{bc})$ and $g_{R_a}(\tfrac{p^{a}-1}{c})$ exist. In this case, reduction formulas of the kind
\begin{equation*} \label{red fla intro}
	g_{R_{ab}}(\tfrac{p^{ab}-1}{bc}) = b g_{R_a}(\tfrac{p^a-1}c) 
		\qquad \text{or} \qquad 
	g_{R_{ab}}(\tfrac{p^{ab}-1}{bc}) = b (g_{R_a}(\tfrac{p^a-1}c)-1)
\end{equation*} 
hold for finite commutative local rings with identity $R_{ab}$ and $R_{a}$ whose corresponding residue fields have sizes $p^{ab}$ and $p^a$, respectively.

In Section \ref{sec5} we give explicit formulas for Waring numbers over finite commutative local rings.
We first extend Kononen's formula for Waring numbers over finite fields to Waring numbers over finite commutative local rings $R$ whose residue field $\ff_q$ has size $q=p^{\varphi(r^m)}$ with $p$ prime, namely 
$$g_R \big( \tfrac{p^{\varphi(r^m)}-1}{r^m} \big) = \tfrac 12 (p-1) \varphi(r^m)$$ 
(see Proposition \ref{kononen} for details). In Theorem \ref{prop gral fla b} we give conditions for the formula 
$$ g_R(\tfrac{p^{ab}-1}{b(p^a-1)}) = b$$
to hold for $R$ a finite commutative local ring and $a,b,p \in \N$ with $p$ prime. Using this last result, in Example \ref{coros 6.3-6.5} we get infinite families of integers $k_{p,a}$ depending on $p$ and $a$, such that 
$$g_{R_{ta}}(k_{p,a}) \in \{2,3,5,7\}$$
where $R_{ta}$ is any finite commutative local ring with residue field of size $p^{ta}$, for certain $t \in \N$.

Finally, in Section \ref{sec6} we apply previous results to get explicit Waring numbers over finite commutative local rings with identity. First, in Theorem \ref{gR2} we give conditions to obtain small Waring numbers over local rings $R$, namely 
$$2 \le g_R(k) \le 3.$$ 
Thus, using Small's result for Waring numbers over finite fields, we get explicit small Waring numbers over finite commutative local rings in Proposition \ref{coro gR23}. Then, considering the case in which $\G(k,q)$ is a connected strongly regular graph, in Proposition \ref{prop gR2} we show that $g_R(2)=2$ for finite commutative local rings $R$ whose residue field is $\ff_q$. 
Lastly, we consider the case when $(R,\frak m)$ is a local ring with prime residue field $\Z_p$, like for example $\Z_{p^s}$ or  $\Z_p[x]/(x^s)$. In this case, we obtain Proposition \ref{prop Zps} as a version of Theorem \ref{gR2} with one less hypothesis. 
Also, for any $p$ odd, in Corollary \ref{coro exact Zp} we obtain the values $g_R(2)=2,3$ for $p\equiv 1,3 \pmod 4$ respectively, $g_R(\frac{p-1}2)=2$ and $g_R(p-1)=p$, while in Corollary \ref{coro p2} we also get $g_R(\frac{p+1}2)=\frac{p+1}2$ for any $R$ of size $p^2$.

\section{The structure of $G_R(k)$ for $R$ local} \label{sec2}
In this section we study some structural properties of the graphs $G_R(k)$ defined in \eqref{GRk}, i.e. 
$$G_R(k)=Cay(R,U_R(k)) \qquad \text{with} \qquad U_R(k)=\{x^k : x \in R^*\},$$ 
for $R$ a finite commutative local ring $(R,\frak m)$ with identity, where $k$ is coprime with $|R|$. We show that 
$G_R(k)$ can be decomposed as Kronecker products, which are actually balanced blow-ups, and we give arithmetic conditions for (non)directedness and connectedness of the graphs.

We begin by showing that the function $x \mapsto x^k$ on $R/\frak m$ is a bijection from the coset 
$a+\frak m$ to the coset $a^k+\frak m$ for any $a \in R^*$. Throughout the paper we will denote by $(a,b)$ the greatest common divisor of integers $a$ and $b$.

\begin{lem} \label{prop local}
	Let $(R,\frak{m})$ be a finite commutative local ring with identity
	with associated residue field $R/\frak{m} \simeq \ff_q$. 
	Let $k\in \mathbb{N}$ be such that $(k,|R|)=1$.
	Then, we have: 
	\begin{enumerate}[$(a)$]
		\item For every $a\in R^*$ the function $g_a :  a+\frak{m} \rightarrow a^{k}+\frak{m}$ given by $g_a(x)=x^k$ is a bijection. \sk
		
		\item Let $a,b\in R$ be such that $b-a\in U_R(k)$. 
		If $a\equiv c\pmod{\frak{m}}$ and $b\equiv d\pmod{\frak{m}}$, then we have $d-c\in U_R(k)$. 
	\end{enumerate}
\end{lem}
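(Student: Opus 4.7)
The plan is to prove part (a) by reducing the $k$-th power map on the coset $a+\mathfrak{m}$ to the $k$-th power map on the multiplicative group $1+\mathfrak{m}$, which will be a bijection by an elementary order argument. Part (b) will then follow immediately from (a) applied to a ``witness'' $x$ satisfying $b-a=x^k$.

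First I would record the structural fact that $1+\mathfrak{m}$ is a multiplicative subgroup of $R^*$. Closure is immediate from $(1+u)(1+v)=1+(u+v+uv)\in 1+\mathfrak{m}$. Since $R$ is a finite, hence Artinian, local ring, its maximal ideal $\mathfrak{m}$ is nilpotent, so every $m\in\mathfrak{m}$ satisfies $m^t=0$ for some $t$, and each $1+m$ is a unit with inverse $\sum_{i\ge 0}(-m)^i$ (a finite sum). The order of this group equals $|\mathfrak{m}|$, which divides $|R|$ in the additive group. Hence $(k,|1+\mathfrak{m}|)$ divides $(k,|R|)=1$, and the $k$-th power map is an automorphism of $1+\mathfrak{m}$.

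For part (a), given $a\in R^*$, multiplication by $a^{-1}$ is a bijection from $a+\mathfrak{m}$ onto $1+\mathfrak{m}$, and multiplication by $a^k$ is a bijection from $1+\mathfrak{m}$ onto $a^k+\mathfrak{m}$. Composing with the $k$-th power automorphism of $1+\mathfrak{m}$,
\[
a+\mathfrak{m} \xrightarrow{\ x\mapsto a^{-1}x\ } 1+\mathfrak{m} \xrightarrow{\ (\cdot)^k\ } 1+\mathfrak{m} \xrightarrow{\ y\mapsto a^k y\ } a^k+\mathfrak{m},
\]
one gets the map $x\mapsto a^k(a^{-1}x)^k=x^k=g_a(x)$. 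As a composition of bijections, $g_a$ is itself a bijection.

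For part (b), write $b-a=x^k$ with $x\in R^*$. The hypotheses $a\equiv c$ and $b\equiv d\pmod{\mathfrak{m}}$ give $d-c\equiv b-a=x^k\pmod{\mathfrak{m}}$, so $d-c$ lies in the coset $x^k+\mathfrak{m}$. By (a) applied to the unit $x$, there exists $y\in x+\mathfrak{m}$ with $y^k=d-c$. Since $x\in R^*$ and the non-units of $R$ are exactly $\mathfrak{m}$, any element of the coset $x+\mathfrak{m}$ is a unit, so $y\in R^*$ and $d-c=y^k\in U_R(k)$. The only non-routine ingredient is the structural observation that $1+\mathfrak{m}$ is a multiplicative group whose order divides $|R|$; once this is in place, the coprimality of $k$ with $|R|$ supplies the $k$-th power bijection on $1+\mathfrak{m}$, and everything else is composition of bijections.
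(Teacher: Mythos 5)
Your proof is correct, and it takes a genuinely different route from the paper's. The paper proves injectivity of $g_a$ by a direct computation: a collision would produce a nontrivial $k$-th root of unity $1+m$ in $1+\frak{m}$, and expanding $(1+m)^k=1$ binomially and multiplying by $m^{N-2}$ (with $N$ the nilpotency index of $m$) forces $km^{N-1}=0$, whence $m^{N-1}=0$ since $k\cdot 1_R$ is a unit --- contradicting minimality of $N$. You instead observe that $1+\frak{m}$ is a finite multiplicative group of order $|\frak{m}|$ dividing $|R|$, so by Lagrange the $k$-th power map is a bijection of $1+\frak{m}$, and then you factor $g_a$ through this group by translating with $a^{-1}$ and $a^k$ (the identity $a^k(a^{-1}x)^k=x^k$ uses commutativity, which is available). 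Both arguments hinge on the same key fact --- that $1$ is the only $k$-th root of unity in $1+\frak{m}$ --- but yours gets it from a soft group-theoretic principle rather than a nilpotency computation; it is shorter, makes transparent that the real hypothesis is coprimality of $k$ with the residue characteristic $p$ (since $|1+\frak{m}|$ is a $p$-power), and generalizes readily. The paper's version is more hands-on and self-contained, isolating exactly where the unit $k\cdot 1_R$ enters. Your part (b) coincides with the paper's. One cosmetic remark: the $k$-th power map on $1+\frak{m}$ is a bijection for any finite group of order coprime to $k$; calling it an automorphism additionally uses that the group is abelian, which holds here but is not needed for the argument.
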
 

\begin{proof}
($a$) Notice that since $a+\frak m$ and $a^{k}+\frak m$ have the same number of elements, by finiteness of $R$ it is enough to show that $g_a$ is injective.
Assume the opposite, i.e.\@ suppose that there are elements $a+m_1, a+m_2$ in $a+\frak m$ with $m_1\neq m_2$ such that 
	$(a+m_1)^k=(a+m_2)^k$. 
Since $a\in R^{*}$ we get $a+m_2\in R^{*}$ as well and so, from the above expression, we obtain that 
	$$((a+m_1)(a+m_2)^{-1})^{k}=1.$$ 
Moreover, we have that $(a+m_1)(a+m_2)^{-1}\in 1+\frak{m}$, since 
in general $(b+\frak{m})(c+\frak{m})=bc+\frak{m}$ and $(a+\frak{m})^{-1}=a^{-1}+\frak{m}$ 
for $a\in R^*$ and $b,c\in R$.
Notice that $(a+m_1)(a+m_2)^{-1}= 1+m$ for some $m\in \frak{m}\smallsetminus \{0\}$ since $m_1\neq m_2$. Hence, 
we obtain a $k$-th root of unity different from $1$ in the coset $1+\frak{m}$.
	
Now, since $m\in \frak{m}\smallsetminus \{0\}$ there exists a minimum $N>1$ such that $m^{N}=0$.
On the other hand, since $(1+m)^{k}=1$ we obtain that
	\begin{equation*}
		km + \sum_{2 \le \ell \le k} \tbinom{k}{\ell}m^{\ell} = 0.
	\end{equation*}
Hence, by multiplying the above identity by $m^{N-2}$ we have that
$km^{N-1}=0$. Thus, since $k$ is coprime with the characteristic of $R$, we obtain that $k \cdot 1_{R}$ is a unit in $R$. 
Then, we have that $m^{N-1}=0$, which contradicts the minimality of $N$. 
Therefore $g$ is injective and, thus, it is a bijection as asserted.
	
	\msk 
	
\noindent ($b$) 
Since $b-a\in U_R(k)$, then there exists $e\in R^{*}$ such that $b-a=e^{k}$. 
On the other hand, by hypothesis $a\equiv c\pmod{\frak{m}}$ and $b\equiv d\pmod{\frak{m}}$, these imply that
	$$(d-c)+\frak{m}=(b-a)+\frak{m}= e^{k}+\frak{m},$$
in particular $(d-c)\in e^{k}+\frak{m}$. Thus, by ($a$), we obtain that there exists $r \in e+\frak{m}$ such that $r^{k}=d-c$.
Finally, since $e\in R^{*}$ then $r\in R^{*}$ and therefore $d-c\in U_R(k)$, as asserted.
\end{proof}

The previous result for cosets on a local ring $R$ directly implies the following result for vertices and arcs on the graph $G_R(k)$. 
\begin{prop} \label{Thm mod Local case}
Let $(R,\frak{m})$ be a finite commutative local ring with identity and let $k\in \mathbb{N}$ be such that $(k,|R|)=1$. 
Denote by $E$ the set of arcs in the graph $G_R(k)$.
If $ab \in E$ then $cd \in E$ for every $c\in a+\frak m$ and $d\in b+\frak m$. The vertices in each coset $a+\frak m$ of $\frak m$ are independent. \hfill $\square$
\end{prop}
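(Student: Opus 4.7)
The plan is to obtain both assertions as essentially direct consequences of Lemma \ref{prop local}, since that lemma already carries the whole arithmetic burden. I begin by translating the arc relation into the Cayley formalism: by definition of $G_R(k) = Cay(R, U_R(k))$, one has $ab \in E$ if and only if $b - a \in U_R(k)$, and likewise $cd \in E$ iff $d - c \in U_R(k)$.

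For the first assertion, given $c \in a + \frak{m}$ and $d \in b + \frak{m}$, I have $a \equiv c \pmod{\frak{m}}$ and $b \equiv d \pmod{\frak{m}}$; combined with the hypothesis $b - a \in U_R(k)$, Lemma \ref{prop local}($b$) yields $d - c \in U_R(k)$, which is exactly the statement $cd \in E$. No extra argument is required.

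For the second assertion (independence of each coset $a + \frak{m}$), I use two simple facts. First, $U_R(k) \subseteq R^*$, since a $k$-th power of a unit is a unit. Second, in a local ring the maximal ideal is precisely the complement of the group of units, so $\frak{m} \cap R^* = \emptyset$. Consequently, for $c, d \in a + \frak{m}$ with $c \neq d$, the difference $d - c$ lies in $\frak{m} \setminus \{0\}$ and hence cannot belong to $U_R(k)$; therefore no arc joins $c$ and $d$. (Loops are already ruled out because $0 \notin U_R(k)$.)

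The step that could have been delicate, namely showing that the property of being a $k$-th power depends only on the coset modulo $\frak{m}$ when one restricts to units, is precisely what Lemma \ref{prop local} establishes via the bijection $g_a$; once that is available, the proposition reduces to bookkeeping and to the elementary characterisation of units in a local ring. So I anticipate no genuine obstacle beyond invoking the preceding lemma correctly.
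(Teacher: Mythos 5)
Your proof is correct and matches the paper's approach: the paper states this proposition as a direct consequence of Lemma \ref{prop local}(b) (hence the omitted proof), and your independence argument via $U_R(k)\subseteq R^*=R\smallsetminus\frak{m}$ is exactly the implicit reasoning. Nothing is missing.
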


We now give the Kronecker product decomposition of $G_R(k)$ in terms of GP-graphs.
We recall that the \textit{Kronecker product} of the graphs $\G_1,\ldots, \G_s$, denoted by 
$$\G=\G_1\otimes \cdots\otimes \G_s,$$ 
is the graph with vertex set $V(\Gamma)=V(\G_1) \times \cdots \times V(\G_s)$ 
where two vertices $v=(v_1,\ldots,v_s)$ and $w=(w_1,\ldots,w_s)$ 
are neighbors in $\Gamma$ if and only if $v_i$ and $w_i$ are neighbors in the graph $\G_i$ for all $i=1,\ldots,s$.
This product is associative and commutative. 
The Kronecker product extends naturally to pseudographs (directed graphs or graphs with loops). 
In this case, notice that $\G_1 \otimes \G_2$ is loopless if $\G_1$ or $\G_2$ is loopless, and $\G_1\otimes \G_2$ is directed if $\G_1$ or $\G_2$ is directed. 

For any graph $G$ and $t\in \N$, the \textit{(balanced) blow-up of order $t$} of $G$, denoted  $G^{(t)}$, is the graph obtained by replacing each vertex $x$ of $G$ by a set $V_x$ of $t$ independent vertices and every edge $\{x, y\}$ of $G$ by a complete bipartite graph $K_{t,t}$ with parts $V_x$ and $V_y$ (of course $G^{(1)} = G$). 
Notice that we have the natural isomorphism 
\begin{equation} \label{blowup}
G \otimes \mathring{K}_m \simeq G^{(m)},
\end{equation}
where $\mathring{\G}$ denotes a graph $\G$ with a loop added at each vertex.

\begin{thm} \label{Local case}
Let $(R,\frak{m})$ be a finite commutative local ring with $m=|\frak{m}|$ and residue field $R/\frak{m} \simeq \ff_q$. If $k\in \mathbb{N}$ satisfies $(k,|R|)=1$, then
\begin{equation} \label{Local form G curv}
G_R(k) \simeq {\G(k,q)}^{(m)} \simeq G_{\ff_q}(k) \otimes \mathring{K}_{m} ,  
\end{equation} 
where ${\G(k,q)}^{(m)}$ denotes the balanced blow-up of order $m$ of $\G(k,q)$, whose independent sets are all the cosets of $\frak m$ in $R$, and $\mathring{K}_{m}$ is the complete graph of $m$ vertices with a loop added at every vertex.
In particular, $G_R(k)$ is $\frac{m(q-1)}{k}$-regular.
\end{thm}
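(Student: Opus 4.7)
The plan is to combine Proposition \ref{Thm mod Local case} with Lemma \ref{prop local}$(a)$ to show that the natural quotient map $R \to R/\mathfrak{m} \simeq \mathbb{F}_q$ identifies $G_R(k)$ with the blow-up of $\Gamma(k,q)$ along the cosets of $\mathfrak{m}$; the second isomorphism in \eqref{Local form G curv} is then immediate from \eqref{blowup}.

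More concretely, I would first observe that the cosets of $\mathfrak{m}$ partition $R$ into $q$ classes of size $m$, and that Proposition \ref{Thm mod Local case} already gives two of the three ingredients needed for the blow-up description: each coset is independent in $G_R(k)$, and the arc relation between distinct cosets is well defined (if one pair of representatives is joined by an arc, then every pair is). It remains to identify this induced relation on cosets with the arc relation of $\Gamma(k,q) = G_{\mathbb{F}_q}(k)$. For this, let $\pi : R \to R/\mathfrak{m}$ denote the quotient and pick representatives $a, b \in R$ with $\pi(a) \neq \pi(b)$. I would show
\begin{equation*}
b - a \in U_R(k) \quad \Longleftrightarrow \quad \pi(b) - \pi(a) \in U_{\mathbb{F}_q}(k).
\end{equation*}

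The forward direction is routine: if $b - a = e^k$ with $e \in R^*$, then $e \notin \mathfrak{m}$, so $\pi(e) \in \mathbb{F}_q^*$ and $\pi(b) - \pi(a) = \pi(e)^k \in U_{\mathbb{F}_q}(k)$. The converse is where Lemma \ref{prop local}$(a)$ does the real work: if $\pi(b) - \pi(a) = \pi(y)^k$ for some $y \in R^*$, then $b - a \in y^k + \mathfrak{m}$; by Lemma \ref{prop local}$(a)$ the map $x \mapsto x^k$ is a bijection from $y + \mathfrak{m}$ onto $y^k + \mathfrak{m}$, so there exists $z \in y + \mathfrak{m} \subset R^*$ with $z^k = b - a$, hence $b - a \in U_R(k)$. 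Putting these three facts together, the map $\pi$ realizes $G_R(k)$ as the balanced blow-up $\Gamma(k,q)^{(m)}$, with the cosets of $\mathfrak{m}$ serving as the independent fibers. The regularity statement then follows since $\Gamma(k,q)$ is $\tfrac{q-1}{k}$-regular (as recalled after \eqref{Gammas}) and each vertex in the blow-up inherits $m$ neighbors from every neighbor of its fiber's image, giving degree $\tfrac{m(q-1)}{k}$.

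The main obstacle is the converse direction above, which is really a lifting statement: a $k$-th power residue modulo $\mathfrak{m}$ must come from an actual $k$-th power of a unit in $R$. The coprimality hypothesis $(k,|R|)=1$ is used precisely here, indirectly via Lemma \ref{prop local}$(a)$, since without it the fibers of $x \mapsto x^k$ over $a^k + \mathfrak{m}$ need not coincide with $a + \mathfrak{m}$. Everything else, including the identification with the Kronecker product $G_{\mathbb{F}_q}(k) \otimes \mathring{K}_m$, is then a formal consequence of \eqref{blowup} and \eqref{GPkq}.
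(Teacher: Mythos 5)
Your proposal is correct and follows essentially the same route as the paper: Proposition \ref{Thm mod Local case} supplies the independence of the cosets and the well-definedness of the arc relation between them, the identification of the quotient with $\G(k,q)$ comes from Lemma \ref{prop local}$(a)$, and the Kronecker form follows from \eqref{blowup}. The only difference is one of detail -- the paper treats the identification of the induced coset relation with the arc relation of $\G(k,q)$ as immediate, whereas you spell out the lifting argument (the converse direction of your displayed equivalence), which is indeed where the hypothesis $(k,|R|)=1$ enters.
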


\begin{proof}
	Clearly, from Proposition \ref{Thm mod Local case}, 
	we have that $G_{R}(k)$ is a balanced blow-up of order $m$ of $\G(k,q)$, where the independent sets are all of the cosets of $\frak m$ in $R$. 
	That is
	$$G_R(k) \simeq {\G(k,q)}^{(m)}.$$
	Now, by \eqref{blowup} we have that ${\G(k,q)}^{(m)}\simeq G_{\ff_q}(k) \otimes \mathring{K}_{m}$, and thus we obtain \eqref{Local form G curv} as desired.
	The last assertion is clear from \eqref{Local form G curv} and the fact that $\G(k,q)$ is $\frac{q-1}{k}$-regular (see the Introduction).
\end{proof}

\begin{exam}
Consider the local ring $\Z_n$ of integers modulo $n$, where $n=p^r$ with $p$ prime. Then, the maximal ideal is $\frak m= \Z_{p^{r-1}}$ and, thus, we have 
$$G_{\Z_{p^r}}(k) \simeq G_{\Z_p}(k) \otimes \mathring{K}_{p^{r-1}} \simeq \G(k,p)^{(p^{r-1})}$$
for any $k$ coprime to $n$. \hfill $\lozenge$
\end{exam}

In the next result we obtain some structural consequences for the graphs and the rings.
We give two arithmetic conditions: the first one 
relates the belonging of $-1$ to $U_R$ and $U_{R/\frak m}$ for a local ring $(R,\frak m)$, 
hence it has to do with non-directness of the graphs, while the second one is on connectivity of the graphs. 

We first need to recall that an integer $n$ is a \textit{primitive divisor} of $p^m-1$ if $n\mid p^m-1$ and $n\nmid p^{a}-1$ for all $1\le a<m$. 
For simplicity, as in our previous works \cite{PV7} we denote this fact by  
	\begin{equation} \label{prim div}
		n\dagger p^m-1.
	\end{equation} 
Also, it is well-known that 
	\begin{equation} \label{con cond}
	\G(k,q) \text{ is connected} \quad \Leftrightarrow \quad n \dagger q-1,
	\end{equation}
where $n$ is the regularity degree of $\G(k,q)$, that is 
	$$n=\tfrac{q-1}{k'} \quad \text{ where } \quad k'=(k,q-1).$$

\begin{coro} \label{-1RFq}
Let $(R,\frak{m})$ be a finite commutative local ring with $m=|\frak{m}|$ and residue field $R/\frak{m} \simeq \ff_q$. 
Let $k\in \mathbb{N}$ such that $(k,|R|)=1$. 
Then, 

\begin{enumerate}[$(a)$]
	\item $-1\in U_R(k)$ if and only if $-1\in U_{R/\frak m,k}$. In particular, $-1 \in U_R(k)$ if and only if $q$ is even or else if $q$ is odd and $(k,q-1) \mid \frac{q-1}{2}$. \sk
	
	\item $G_R(k)$ is undirected if and only if $G_{R/\frak m}(k)$ is undirected. \sk 

\nopagebreak 	
	\item $G_{R}(k)$ is connected if and only if $\frac{q-1}{(k,q-1)} \dagger q-1$.
\end{enumerate}
\end{coro}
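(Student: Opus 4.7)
The plan is to reduce each of the three claims to a corresponding statement on the residue field, using Lemma 2.1 and Theorem 2.3, and exploiting the fact that in a local ring the reduction $\pi\colon R\to R/\frak m$ restricts to a surjection $R^*\twoheadrightarrow (R/\frak m)^*$.

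For part $(a)$, the forward direction is immediate: if $x^k=-1$ for some $x\in R^*$, then $\pi(x)^k=-1$ with $\pi(x)\in \ff_q^*$. For the converse I would fix $\bar y\in\ff_q^*$ with $\bar y^k=-1$, lift it to some $y\in R^*$, and then apply Lemma \ref{prop local}$(a)$ to the bijection $g_y\colon y+\frak m \to y^k+\frak m$. Since $y^k+\frak m = -1+\frak m$, this produces some $x\in y+\frak m$ with $x^k=-1$; and since any coset $y+\frak m$ with $y\in R^*$ is entirely contained in $R^*$ (non-units lie in $\frak m$), we get $x\in R^*$ as required. For the ``in particular'' clause, when $q$ is even $-1=1=1^k$ is trivial; when $q$ is odd, $U_{\ff_q}(k)$ is the unique subgroup of the cyclic group $\ff_q^*$ of order $\tfrac{q-1}{(k,q-1)}$, so it contains $-1$ (the unique element of order $2$) exactly when this order is even, i.e.\@ when $(k,q-1) \mid \tfrac{q-1}{2}$.

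Part $(b)$ then reduces to part $(a)$. Indeed, $G_R(k)$ is undirected iff its connection set $U_R(k)$ is symmetric, and since $U_R(k)$ is a multiplicative subgroup of $R^*$ (being the image of the $k$-th power endomorphism of $R^*$), symmetry is equivalent to $-1\in U_R(k)$. The same remark applied to $R/\frak m$ combined with $(a)$ yields the equivalence.

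For part $(c)$ I would use the Kronecker decomposition $G_R(k)\simeq \G(k,q)\otimes \mathring{K}_m$ from Theorem \ref{Local case}. Because $\mathring{K}_m$ has a loop at every vertex and a (directed) edge between every ordered pair of distinct vertices, a short check from the definition of the Kronecker product shows that $\G\otimes\mathring{K}_m$ is (strongly) connected iff $\G$ is; hence $G_R(k)$ is connected iff $\G(k,q)$ is, and the stated arithmetic criterion follows from \eqref{con cond}. The only real subtlety in the whole proof is the lift step in $(a)$: one must verify both that the chosen lift lies in $R^*$ and that Lemma \ref{prop local}$(a)$ applies to land exactly on the coset $-1+\frak m$; everything else is routine.
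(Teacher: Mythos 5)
Your proof is correct, and its overall skeleton (reduce everything to the residue field via Lemma \ref{prop local} and the decomposition $G_R(k)\simeq \G(k,q)\otimes\mathring{K}_m$) matches the paper's. Two sub-steps are argued differently. For the converse in $(a)$, the paper does not lift directly: it notes that $-1\in U_{\ff_q,k}$ makes $G_{\ff_q}(k)$ undirected, invokes Theorem \ref{Local case} and closure of undirected graphs under Kronecker product to conclude $G_R(k)$ is undirected, and then reads off $-1\in U_R(k)$ from symmetry of the connection set; you instead lift $\bar y$ to a unit $y$ and apply the bijection $g_y:y+\frak m\to y^k+\frak m=-1+\frak m$ of Lemma \ref{prop local}$(a)$, which is more direct and purely algebraic (and your observation that $y+\frak m\subset R^*$ because non-units form $\frak m$ is exactly the point that needs checking). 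For the converse in $(c)$, the paper argues spectrally: if $\frac{q-1}{(k,q-1)}$ is not a primitive divisor then the trivial eigenvalue of $G_{\ff_q}(k)$, and hence of $G_R(k)=G_{\ff_q}(k)\otimes\mathring{K}_m$, has multiplicity greater than $1$, so $G_R(k)$ is disconnected; you instead project a walk in the product onto its first coordinate, which works because $\G(k,q)$ is loopless so every arc of the product projects to an arc of $\G(k,q)$, and is more elementary (no spectral input). Your approach buys self-containedness at the level of Lemma \ref{prop local}; the paper's buys uniformity, since the same Kronecker-product formalism is reused throughout Sections 3 and 4. Both are sound; the only genuine care points are the ones you flagged yourself, and you handled them.
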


\begin{proof}
($a$)
Clearly, if $-1\in U_R(k)$ then by projecting on $R/\frak m$, we obtain that $-1 \in U_{\ff_{q},k}$ as asserted.
	
Conversely, if $-1 \in U_{\ff_{q},k}$, then $G_{\ff_{q}}(k)$ is undirected. 
On the other hand, since $(k,|R|)=1$ by Theorem \ref{Local case}, we have that $G_{R}(k) = G_{\ff_{q}}(k) \otimes \mathring{K}_m$. 
Thus, the graph $G_{R}(k)$ is undirected since the Kronecker product is closed in the family of undirected graphs.
Hence, we have that the set $U_R(k)$ is symmetric and so $-1 \in U_R(k)$.
	
The last statement follows from the fact that $-1 \in U_{\ff_{q},k}$ if and only if $q$ is even or else if $q$ odd and $(k,q-1)\mid \frac{q-1}{2}$.

\noindent ($b$) 
This is a direct consequence of ($a$).

\noindent ($c$) 
Assume first that $\frac{q-1}{(k,q-1)} \dagger q-1$. Note that $(k,q)=1$ since $(k,|R|)=1$ and so
by Theorem~\ref{Local case}, we have that
$G_{R}(k) \simeq  G_{\ff_q}(k) \otimes \mathring{K}_{m}$, where $G_{\ff_q}(k)=\G(k,q)$ by \eqref{GPkq}.
The graph $\G(k,q)$ is connected since $\frac{q-1}{(k,q-1)} \dagger q-1$, by \eqref{con cond}. Thus, $\G(k,q) \otimes \mathring{K}_{m}$ is connected as well, since the Kronecker product between a  connected graph (digraph) and $\mathring{K}_m$ is always connected. Therefore, $G_{R}(k)$ is connected as asserted. 

Conversely, assume that $G_R(k)$ is connected.
If $\frac{q-1}{k'}$ is not a primitive divisor of $q-1$, then $G_{\ff_q}(k)$ is not connected and, thus, its trivial eigenvalue  $\lambda_1(G_{\ff_q}(k)) = \frac{q-1}{(k,q-1)}$ has multiplicity greater than $1$.
Since $G_{R}(k) \simeq  G_{\ff_q}(k) \otimes \mathring{K}_{m}$, by Theorem \ref{Local case}, the trivial eigenvalue $$\lambda_1(G_R(k)) = m\lambda_1(G_{\ff_q}(k)) = \tfrac{m(q-1)}{(k,q-1)}$$ 
of $G_{R}(k)$ has multiplicity greater than $1$ and hence $G_R(k)$ is not connected. 
\end{proof}

It is known (for instance by using Hensel's lemma and the fact that a finite ring is complete) that the number of $k$-th roots of unity in $(R,\frak m)$ is the same as the number of $k$-th roots of unity in $R/\frak m$, for any $k$ coprime with the size of $R$. 
We finish the section with a direct graph theoretical proof of this fact.

\begin{prop} \label{k-th roots of 1}
Let $(R,\frak{m})$ be a finite commutative local ring with identity and let $k\in \mathbb{N}$ be such that $(k,|R|)=1$. 
Then, $R$ and $R/\frak m$ have the same number of $k$-th roots of unity.
\end{prop}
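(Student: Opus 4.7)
The plan is to use the Kronecker product decomposition of Theorem \ref{Local case} to compare the sizes of the image sets $U_R(k)$ and $U_{\ff_q}(k)$, and then recover the counts of roots of unity via the first isomorphism theorem.

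First I would observe that the set $\mu_k(R) := \{x \in R : x^k = 1\}$ consists entirely of units, hence it is precisely the kernel of the group homomorphism $\varphi_k : R^* \to R^*$, $x \mapsto x^k$. Since the image of $\varphi_k$ is exactly $U_R(k)$, the first isomorphism theorem gives
\begin{equation*}
|\mu_k(R)| = \frac{|R^*|}{|U_R(k)|}.
\end{equation*}
Because $(R,\frak m)$ is local with residue field of size $q$ and $m = |\frak m|$, we have $|R| = qm$ and $R^* = R \setminus \frak m$, so $|R^*| = (q-1)m$. An analogous computation in the field $\ff_q$ gives $|\mu_k(\ff_q)| = (q-1)/|U_{\ff_q}(k)|$.

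Next I would read off $|U_R(k)|$ from the graph-theoretic structure. By Theorem \ref{Local case}, $G_R(k) \simeq G_{\ff_q}(k) \otimes \mathring{K}_m$. The (out-)degree of each vertex of $G_R(k)$ is $|U_R(k)|$, while the out-degree of each vertex of $G_{\ff_q}(k)$ is $|U_{\ff_q}(k)|$ and the out-degree of each vertex of $\mathring K_m$ is $m$ (every vertex is joined to every vertex, with the loop included). Since degrees multiply under Kronecker products, this yields
\begin{equation*}
|U_R(k)| = m \cdot |U_{\ff_q}(k)|.
\end{equation*}

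Combining the two displayed identities then gives
\begin{equation*}
|\mu_k(R)| = \frac{(q-1)m}{m\,|U_{\ff_q}(k)|} = \frac{q-1}{|U_{\ff_q}(k)|} = |\mu_k(\ff_q)|,
\end{equation*}
which is the claim. There is essentially no obstacle here: once the Kronecker decomposition of Theorem \ref{Local case} is in hand, the only thing to verify is the degree identity for the Kronecker product with the reflexive complete graph $\mathring K_m$, which is immediate from the definition. The mild subtlety is that one must work with $G_R(k)$ as a digraph (so that the degree equals $|U_R(k)|$ even when $-1 \notin U_R(k)$), but this is covered by the setup of the paper.
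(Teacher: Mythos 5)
Your proposal is correct and follows essentially the same route as the paper: the paper counts the fibers of $x \mapsto x^k$ on $R^*$ directly (showing each nonempty fiber has size $u_R(k)$, hence $u_R(k)\,|U_R(k)| = |R^*|$), which is exactly your first-isomorphism-theorem step, and it likewise extracts $|U_R(k)| = m\,|U_{\ff_q}(k)|$ from the regularity degree in Theorem \ref{Local case}. The two arguments differ only in phrasing.
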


\begin{proof}
Let $u_{R}(k)=|\{x\in R^{*}: x^{k}=1 \}|$ be the number of $k$-th roots of the unity on $R$ and  
put $u_R(k,a) =|\{x\in R^{*}: x^{k}=a\}|$ for $a\in R^*$. Then, we have $u_R(k,1)=u_R(k)$ and
$$\sum_{a \in U_R(k)} u_R(k,a) = |R^*|.$$
Now, fix $x\in R$. For any $y\in R^*$ such that $x^k=y^k =a$ we have that $(xy^{-1})^k=1$, and this implies that
$u_R(k,a)=u_{R}(k)$ for all $a\in U_R(k)$. Hence 
$$u_{R}(k) |U_R(k)|=|R^*|.$$
In the same way, we obtain that
$$u_{R/\frak{m}}(k) |U_{R/\frak{m},k}| = |(R/\frak{m})^*|.$$

On the other hand, we have that $|U_R(k)| = m|U_{R/\frak{m},k}|$ where $m=|\frak{m}|$, since the degree of regularity of $G_R(k)$ is $m$ times the regularity degree of $G_{R/\frak m}(k)$ by Theorem \ref{Local case}.
Finally, since $|R^*|=m|(R/\frak{m})^*|$, we finally obtain that 
$$u_{R/\frak{m}}(k) = \frac{|(R/\frak{m})^*|}{|U_{R/\frak{m},k}|} = \frac{|R^*|}{|U_R(k)|}=u_{R}(k),$$
as asserted.
\end{proof}

\section{The diameter of $W_R(k)$ and Waring numbers $g_R(k)$} \label{sec3}
In Theorem 2.1 of \cite{PV6} we showed that the Waring number $g(k,q)$ over finite fields $\ff_q$ equals the diameter of the generalized Paley graph $\G(k,q)$, provided that this graph is connected (so that the Waring number exists). This kind of relationship can be extended to (not necessarily finite) commutative rings $R$ with identity as well. Namely, the Waring number $g_{R}(k)$ is related to the diameters of the graphs $G_{R}(k)$ and $W_{R}(k)$ as we next show.

\subsection{Waring numbers as diameters of Cayley graphs.}
We will denote the diameter of a graph $G=G(V,E)$ by $\delta(G)$.  
We recall that the diameter of $G$ is the greatest distance between any pair of vertices, that is
$$ \delta(G) = \max _{u,v\in V} d(v,u)$$
where $d(u,v)$ denotes the distance between the vertices $u$ and $v$, that is the number of (directed) edges in a shortest path (or geodesic) between $u$ and $v$.

As we have done in \cite{PV6} for finite fields, we now relate the diameter of the GP-graphs $W_R(k)$ with the Waring numbers $g_R(k)$, for a finite commutative ring $R$ with identity.

\begin{thm} \label{eqbound}
	Let $R$ be a commutative ring with identity and let $k\in \N$. 
	Then, we have:
	\begin{enumerate}[$(a)$]
		\item If $W_{R}(k)$ is connected with $\delta(W_{R}(k))<\infty$, then $g_{R}(k)$ exists and $g_{R}(k)=\delta (W_{R}(k))$. \sk
	
		\item If $G_{R}(k)$ is connected with $\delta(G_{R}(k))<\infty$, then $W_{R}(k)$ is also connected, $g_{R}(k)$ exists and $$g_{R}(k) = \delta (W_{R}(k)) \le \delta (G_{R}(k)).$$	
	\end{enumerate}
\end{thm}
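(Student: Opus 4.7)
The plan is to translate the Waring problem into the standard dictionary between word-length in a Cayley graph and representations as sums of elements of the connection set. For any abelian group $(G,+)$ and $S\subset G$, the Cayley graph $Cay(G,S)$ is vertex-transitive under translation, so $d(u,v)=d(0,v-u)$ and hence
\[
\delta(Cay(G,S)) \;=\; \max_{r\in G}d(0,r),
\]
with $d(0,r)$ equal to the minimum number of elements of $S$ (with repetition) whose ordered sum is $r$. This identity holds uniformly in the directed and undirected cases.

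For part $(a)$, I would apply the above dictionary with $G=R$ and $S=S_R(k)=\{x^k:x\in R^{\times}\}$. Then $d_{W_R(k)}(0,r)$ is the least $s\in\N$ for which one can write $r=y_1^k+\cdots+y_s^k$ with each $y_i\in R\smallsetminus\{0\}$. Since $0^k=0$, one may pad any such representation by zero summands without changing the sum; this reconciles the paper's definition of $g_R(k)$ (as sums of \emph{exactly} $s$ $k$-th powers, with zero allowed) with the at-most-$s$ nonzero-summand formulation. Consequently $g_R(k)=\max_{r\in R} d_{W_R(k)}(0,r)=\delta(W_R(k))$, and finiteness of the diameter furnishes the existence of $g_R(k)$.

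For part $(b)$, the inclusion $U_R(k)=S_R(k)\cap R^{*}\subset S_R(k)$ exhibits $G_R(k)$ as a spanning subgraph of $W_R(k)$: same vertex set, and a subset of the arcs. Hence every directed $u\to v$ walk in $G_R(k)$ is a walk in $W_R(k)$, giving $d_{W_R(k)}(u,v)\le d_{G_R(k)}(u,v)$ for all $u,v\in R$. This yields connectedness of $W_R(k)$ together with $\delta(W_R(k))\le\delta(G_R(k))<\infty$, and part $(a)$ then provides $g_R(k)=\delta(W_R(k))\le\delta(G_R(k))$, as asserted.

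The only delicate point I anticipate is the bookkeeping about zero summands: because $g_R(k)$ is defined via sums of exactly $s$ terms while Cayley-graph distance counts only nonzero contributions, one must explicitly invoke padding by $0^k$ to equate the two notions. Once this is noted, the argument is essentially a dictionary translation, and the directed vs.\ undirected distinction causes no trouble since translation-invariance of the distance in $Cay(R,S)$ holds in either case.
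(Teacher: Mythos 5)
Your proposal is correct and follows essentially the same route as the paper: translation-invariance of distances in $Cay(R,S_R(k))$ reduces the diameter to $\max_{r} d(0,r)$, the distance $d(0,r)$ is identified with the minimal number of nonzero $k$-th powers summing to $r$, padding by $0^k$ reconciles this with the definition of $g_R(k)$, and part $(b)$ follows from $G_R(k)$ being a spanning subgraph of $W_R(k)$. No gaps; the zero-padding point you flag is exactly the step the paper also makes explicit.
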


\begin{proof}
$(a)$ We proceed similarly as in Theorem 2.1 in \cite{PV6}, but we include the proof for completeness.
First, we show that the diameter can be realized with paths starting from $0$.
Let $u,v$ be vertices of $W_R(k)$ such that 
$$d(u,v)=\delta(W_{R}(k))=\delta.$$ 
Clearly, $\delta \ge d(0,c)$ for all $c\in R^{\times}$. In particular, we have that $\delta\ge d(v-u,0)$. 
Assume that $\delta > d(v-u,0)=t$. Then, there is a sequence $x_{1}, \ldots, x_{t} \in R^{\times}$ such that 
$$v-u = x_1^k + \cdots + x_t^k.$$ 
This induces a walk from $u$ to $v$, i.e.\@ $d(u,v) \le t < \delta = d(u,v)$, which is absurd. Therefore $\delta = d(0,v-u)$, and we have that 
\begin{equation}\label{diam gama}¨
	\delta(W_{R}(k)) = \max_{c\in R^{\times}} d(0,c).
\end{equation}

Notice that every element of $R^{\times}$ can be written as a sum of $\delta$ $k$-th powers. In fact, if 
$c\in R^{\times}$ with $d(0,c) = s'$ then $s' \le \delta$, then there exist $y_{1},\ldots,y_{s'}$ such that 
$$y_1^k + \cdots + y_{s'}^k = c.$$ 
Defining $y_{s'+1} = \cdots = y_{\delta}=0$ 
we obtain that $y_1^k + \cdots + y_{\delta}^k = c$, as desired.

Now, by \eqref{diam gama}, there exists $a\in R^{\times}$ such that $\delta(\G)=d(0,a)$. 
Clearly, $a$ cannot be written as a sum of less than a number $\delta$ of $k$-th powers, otherwise we would obtain a walk from $0$ to $a$
in $W_{R}(k)$ of length less than $d(0,a)$. Therefore, $g_{R}(k)$ exists and $g_{R}(k) = \delta(W_{R}(k))$, as claimed.

\sk 
\noindent $(b)$ Notice that since $G_{R}(k)$ is connected and it is a subgraph of $W_{R}(k)$ with the same vertex set, then $W_{R}(k)$ is connected and 
we have that $\delta(W_{R}(k))\le \delta(G_{R}(k))$, and so item ($a$) implies the result.
\end{proof}

\begin{rem}
If we take $R=\ff_{q}$ a finite field we re-obtain Theorem 2.1 of \cite{PV6}. 
\end{rem}

\subsection{Relation between girth and diameter}
We recall that the \textit{girth} of a (directed) graph $G$ is the length of the shortest (directed) cycle in the graph if $G$ has (directed) cycles, otherwise it is infinite. It is usually denoted by $g(G)$ but we will denote it by $\gamma(G)$ to avoid confusion with the Waring numbers $g_R(k)$ and $g(k,q)$. That is, we have 
$$\gamma(G) = \begin{cases} 
	\min\limits_{C_n \hookrightarrow G} n, & \qquad \text{if $G$ has (directed) cycles}, \\[1mm]
	\hfil \infty, & \qquad \text{if $G$ is acyclic}.     
\end{cases}$$

For the next result we will need this lemma relating the girth and the diameter of a directed connected graph.
\begin{lem} \label{g=d+1}
	If $\G$ is a directed connected graph and $\gamma(\G)> \delta(\G)$ then $\gamma(\G)=\delta(\G)+1$.
\end{lem}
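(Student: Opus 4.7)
The plan is to bound $\gamma(\Gamma)$ from above by $\delta(\Gamma)+1$ using any single arc of $\Gamma$, together with the standard fact that a closed directed walk always contains a directed cycle of no greater length.

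First I would note that since the diameter $\delta(\Gamma)$ is finite and we are dealing with a directed graph, the hypothesis that $\Gamma$ is ``connected'' must be interpreted as strongly connected (otherwise $d(v,u)$ would be infinite for some pair). Pick any arc $\overrightarrow{uv}$ of $\Gamma$; such an arc exists because otherwise $\Gamma$ has no cycles at all, forcing $\gamma(\Gamma)=\infty$, and then the conclusion $\gamma(\Gamma)=\delta(\Gamma)+1$ would be vacuous or would already contradict connectedness for $|V(\Gamma)|\ge 2$. By strong connectedness there is a directed path from $v$ back to $u$ of length at most $\delta(\Gamma)$. Concatenating the arc $\overrightarrow{uv}$ with this return path yields a closed directed walk based at $u$ of length at most $\delta(\Gamma)+1$.

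Next I would invoke the standard lemma that every closed directed walk contains a directed cycle of length at most the length of the walk. (Proof sketch, in case it is needed: if the closed walk repeats a vertex other than the basepoint pair, cut out the repeated stretch to obtain a strictly shorter closed walk, and iterate; the process terminates at a simple directed cycle.) Applying this to the closed walk constructed above gives a directed cycle in $\Gamma$ of length at most $\delta(\Gamma)+1$, hence
\begin{equation*}
\gamma(\Gamma)\le \delta(\Gamma)+1.
\end{equation*}
Combined with the hypothesis $\gamma(\Gamma)>\delta(\Gamma)$, which forces $\gamma(\Gamma)\ge \delta(\Gamma)+1$ since both quantities are integers, we obtain the desired equality $\gamma(\Gamma)=\delta(\Gamma)+1$.

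I do not anticipate any real obstacle. The only subtle point is that the argument requires \emph{strong} connectedness rather than just weak connectedness, so that a genuine directed path from $v$ to $u$ exists and realizes the diameter; this is presumably the intended meaning in the paper, since the graphs $G_R(k)$ and $W_R(k)$ are Cayley graphs (hence vertex-transitive) and the diameter hypothesis $\delta(\Gamma)<\infty$ makes this automatic in the setting where the lemma will be applied.
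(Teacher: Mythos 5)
Your proof is correct. It takes a mildly different route from the paper's: the paper starts from a girth-realizing cycle $v_0v_1\cdots v_\gamma$ and argues that its initial segment $v_0\cdots v_{\gamma-1}$ must be a geodesic (otherwise a shorter $v_0$--$v_{\gamma-1}$ path followed by the closing arc $\overrightarrow{v_{\gamma-1}v_0}$ would produce a shorter cycle), whence $\gamma(\G)-1=d(v_0,v_{\gamma-1})\le\delta(\G)$; you instead start from an arbitrary arc $\overrightarrow{uv}$, close it with a $v$--$u$ geodesic of length at most $\delta(\G)$, and extract a cycle from the resulting closed walk. The two arguments are mirror images of one another and both reduce to the inequality $\gamma(\G)\le\delta(\G)+1$ plus integrality. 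Yours requires the auxiliary fact that a closed directed walk contains a directed cycle of no greater length (which you could avoid by observing that a geodesic is a simple path, so your closed walk is already a cycle); the paper's requires the symmetric observation that a geodesic plus the closing arc is a genuine cycle. Your remark that ``connected'' must mean strongly connected is apt, and matches how the lemma is applied (to vertex-transitive Cayley graphs with finite diameter). One pedantic caveat you half-address: for the one-vertex graph one has $\delta=0$ and $\gamma=\infty$, so the statement as literally written fails there; like the paper, you may safely assume $\G$ has at least one arc. Incidentally, the paper's final line contains a typo ($\gamma(\G)=\delta(\G)-1$ instead of $\delta(\G)+1$) which your version does not reproduce.
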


\begin{proof}
	Let $v_0 v_1 \cdots v_\gamma$ be a closed directed walk in $\G$ which realizes the girth, i.e.\@ $v_0=v_\gamma$ and 
	$\gamma(\G)=\gamma$.
	Notice that, there is an arrow from $v_{\gamma-1}$ to $v_0$, and so $d(v_0,v_{\gamma-1})=\gamma-1$, otherwise we would obtain a cycle of length less than $\gamma(\G)$.
	Hence, by maximality of the diameter we obtain that $\gamma(\G)-1\le \delta(\Gamma)$. Therefore, $\gamma(\G)=\delta(\G)-1$, as claimed.
\end{proof}

The following result will be useful to understand the diameter of $G_{R}(k)$ in the next section.
From now on we will need to denote by $d_\G$ the distance in a graph $\G$ to consider and compare distances between different graphs simultaneously.
\begin{prop} \label{lem1}
	Let $\G$ be a  graph and let $m,n$ be positive integers. 
\begin{enumerate}[$(a)$]
	\item If $\G$ is undirected and connected then $\G\otimes \mathring{K}_m$ is also undirected and connected and 
	\begin{equation} \label{diam1}
	\delta (\G\otimes \mathring{K}_m) = \begin{cases}
	\delta(\G),  & \qquad \text{if $\G \ne K_n$ for any $n\in \N$}, \\[1mm]
	\hfil 2,	 & \qquad \text{if $\G=K_n$ for some $n\ge 2$ and $m\ge 2$}, \\[1mm]
	\hfil 1,	 & \qquad \text{if $\G=K_1$ or else $\G=K_n$ and $m=1$}.
	\end{cases}
	\end{equation}
	
	\item If $\G$ is directed arc-transitive and connected, then $\G\otimes \mathring{K}_m$ is directed and connected and 
	\begin{equation} \label{diam2}
		\delta (\G\otimes \mathring{K}_m)=
		\begin{cases}
		\hfil \delta(\G), 	& \qquad \text{if $\gamma(\G)\le \delta(\G)$ or $m=1$},	\\[1mm]
		\hfil \delta(\G)+1, & \qquad \text{if $\gamma(\G)> \delta(\G)$ and $m\ge 2$}.
		\end{cases}
	\end{equation}
\end{enumerate}
\end{prop}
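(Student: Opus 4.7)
The plan is to express the distance in $\Gamma\otimes\mathring{K}_m$ purely in terms of walks in the factor $\Gamma$, exploiting the very flexible walk-structure of $\mathring{K}_m$. Since $\mathring{K}_m$ is complete with a loop at every vertex, between any two of its vertices $a,b$ there is a walk of any length $k\ge 1$, and a walk of length $k=0$ iff $a=b$. Combining this with the standard fact that a walk of length $k$ in a Kronecker product $\Gamma_1\otimes\Gamma_2$ exists iff there are walks of the same length $k$ in each factor, I would first establish the formula
\[
d_{\Gamma\otimes\mathring{K}_m}\bigl((v,a),(w,b)\bigr)=
\begin{cases}
d_\Gamma(v,w),& v\neq w,\\
0,&v=w,\ a=b,\\
\ell(v),& v=w \text{ and } a\neq b,
\end{cases}
\]
where $\ell(v)$ denotes the minimum length of a closed walk of length $\ge 1$ at $v$ in $\Gamma$, and the third case only occurs when $m\ge 2$. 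The diameter of the product is then the maximum over pairs of vertices of these three quantities, which in particular forces the product to be connected.

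For part (a), $\Gamma$ is undirected and connected, and the product inherits undirectedness from its factors. If $\Gamma\neq K_1$, then by connectedness every vertex of $\Gamma$ has at least one neighbor, so $\ell(v)=2$ for every $v$ (traverse any incident edge and return). Hence for $m\ge 2$ the diameter equals $\max(\delta(\Gamma),2)$: this is $\delta(\Gamma)$ whenever $\delta(\Gamma)\ge 2$, i.e.\ whenever $\Gamma$ is not a complete graph, and it is $2$ precisely when $\Gamma=K_n$ with $n\ge 2$, where $\delta(\Gamma)=1$. The case $m=1$ is handled by $\Gamma\otimes\mathring{K}_1\simeq\Gamma$, which gives diameter $\delta(\Gamma)$. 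Together with the trivial case $\Gamma=K_1$, this reproduces the three subcases of \eqref{diam1}.

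For part (b), $\Gamma$ is directed, arc-transitive (hence vertex-transitive) and connected, so the argument for $v\neq w$ carries over verbatim using directed distances. The interesting point is the value of $\ell(v)$ in the remaining case $v=w,\ a\neq b$: a closed directed walk of length $\ge 1$ at $v$ contains a directed cycle through some vertex (by excising detours between repeated vertices), and by vertex-transitivity any directed cycle of minimum length $\gamma(\Gamma)$ can be transported by an automorphism to pass through $v$, giving $\ell(v)=\gamma(\Gamma)$. Thus for $m\ge 2$ the diameter equals $\max(\delta(\Gamma),\gamma(\Gamma))$, which is $\delta(\Gamma)$ when $\gamma(\Gamma)\le\delta(\Gamma)$ and, by Lemma \ref{g=d+1}, equals $\delta(\Gamma)+1$ when $\gamma(\Gamma)>\delta(\Gamma)$. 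For $m=1$ the product collapses to $\Gamma$ itself, giving \eqref{diam2}.

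The step I expect to be the most delicate is the identification $\ell(v)=\gamma(\Gamma)$ in the directed case: in a general digraph a vertex need not lie on a shortest cycle, and it is precisely arc-transitivity (via vertex-transitivity) that forces every vertex to lie on a girth cycle. The other non-trivial input is Lemma \ref{g=d+1}, which converts the case $\gamma(\Gamma)>\delta(\Gamma)$ into the clean answer $\delta(\Gamma)+1$. The degenerate possibilities ($\Gamma=K_1$, $m=1$, or $\Gamma$ complete) then amount to routine case-checking once the general formula above is in place.
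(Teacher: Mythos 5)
Your proposal is correct and follows essentially the same route as the paper: both reduce $d_{\G\otimes\mathring{K}_m}((v,i),(w,j))$ to $d_\G(v,w)$ when $v\neq w$ and to the length of a shortest closed walk (hence, via arc-transitivity, the girth) when $v=w$, and both finish the directed case with Lemma \ref{g=d+1}. Your packaging via the walk-length characterization of the Kronecker product is a slightly cleaner way to organize the same case analysis, and it in fact handles the $v=w$ subcase of part (a) more explicitly than the paper does.
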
 

\begin{proof}
($a$) Let us see first that $\G\otimes \mathring{K}_m$ is connected. Let $(v,i),(w,j)$ be any pair of vertices of $\G\otimes \mathring{K}_m$. 
Since $\G$ is connected there is a walk $v v_1 \cdots v_m$ from $v$ to $w$ in $\G$, where $v_m = w$. Thus,  $(v,i)(v_1,j) \cdots(v_m,j)$ is a walk in $\G \otimes \mathring{K}_m$
from $(v,i)$ to $(w,j)$. Also, since $\mathring{K}_m$ and $\G$ are undirected graphs ($\G$ by hypothesis), their Kronecker product is also undirected.

Now we compute the diameter of $\G\otimes \mathring{K}_m$. Assume first that $\G$ is not a complete graph.
To see that $\delta (\G\otimes \mathring{K}_m)=\delta(\G)$, it is enough to show that given $v,w \in V(\G)$ we have
	$$d((v,i),(w,j))=d(v,w)$$
for any $i,j\in \{1,\ldots,m\}$.
	For this purpose, notice that by the definition of the Kronecker product for all $i,j \in \{1,\ldots,m\}$ we have 
	\begin{equation}\label{neighbors}
		(v,i) \sim_{\G\otimes \mathring{K}_m} (w,j)\qquad  \Leftrightarrow \qquad v\sim_{\G} w. 
	\end{equation}
	Clearly, \eqref{neighbors} implies the above assertion. 
		Thus, if $(v,i),(w,j) \in V(\G)\times \{1,\ldots,m\}$ satisfy   
	$d((v,i),(w,j)) = \delta (\G\otimes \mathring{K}_m)$, then by our first assertion we have that
	$$\delta (\G\otimes \mathring{K}_m) = d((v,i),(w,j)) = d(v,w).$$ 
	By definition of diameter, we obtain that $\delta(\G)\ge \delta (\G\otimes \mathring{K}_m)$. In a similar manner, we can obtain that
	$\delta(\G)\le \delta (\G\otimes \mathring{K}_m)$, and therefore $\delta (\G\otimes \mathring{K}_m)=\delta(\G)$, as desired. 

 Now, suppose that $\G=K_n$ for some $n\ge 1$. Then, $\G \otimes \mathring{K}_m$ is an $n$-partite complete graph with partitions of size $m$. Thus, assuming $n\ge 2$ we have that
$\delta( \G \otimes \mathring{K}_m)=2$ for $m\ge 2$, as asserted (the case $m=1$ is trivial).

\noindent ($b$) 
That $\G \otimes \mathring{K}_m$ is connected in this case can be proved in the same way as in item ($a$).
On the other hand, 
it is well-known that if either $G$ or $H$ is a directed graph, 
then $G \otimes H$ is also directed. Hence, $\G \otimes \mathring{K}$ is directed, since $\G$ is directed by assumption.

Clearly, if $m=1$, then $\G\otimes \mathring{K}_1=\G$ and so $\delta(\G\otimes \mathring{K}_1)=\delta(\G)$, as asserted. 
Now, assume that $m \ge 2$ and let $(v,i)$ and $(w,j)$ be two vertices in $\G\otimes \mathring{K}_m$ which realize the diameter, that is $\delta(\G \otimes \mathring{K}_m)=d_{\G\otimes \mathring{K}_m}((v,i),(w,j))$. 
Thus, there are two cases to consider, either $v\neq w$ or else $v=w$. 

If $v\neq w$, then it can be shown that $d_{\G \otimes \mathring{K}_m}((v,i),(w,j)) = d_{\G}(v,w)$ as in ($a$) and so in this case 
$\delta(\G \otimes \mathring{K}_m)=\delta(\G)$. 
Notice that in this case a directed cycle which begins and ends in $v$ induces 
a directed walk from $(v,i)$ to $(v,j)$ with $i\neq j$. In particular a directed cycle of length $\gamma(\G)$ induces a minimal directed walk from $(v,i)$ to $(v,j)$, that is $d_{\G \otimes \mathring{K}_m}((v,i),(v,j))=\gamma(\G)$ and so 
we have that $\gamma(\G)\le \delta (\G)$, in this case.

On the other hand, if $v=w$ then any directed walk from $(v,i)$ to $(v,j)$ induces a closed walk in $\G$ 
which begins and ends in $v$. Conversely, any closed walk in $\G$ which begins and ends in $v$ induces a directed walk from $(v,i)$ to $(v,j)$. 
Hence, we obtain that $d_{\G \otimes \mathring{K}_m}((v,i),(v,j))$ is equal to the length of the minimal directed cycle in $\G$ which begins and ends in $v$. Moreover, since $\G$ is arc-transitive we have that
$d_{\G \otimes \mathring{K}_m}((v,i),(v,j))=\gamma(\G)$ . 
This implies that 
$$\delta (\G \otimes \mathring{K}_m) = d_{\G \otimes \mathring{K}_m}((v,i),(v,j)) = \gamma(\G).$$
Notice that in this case, necessarily $\gamma(\G)> \delta(\G)$ since otherwise the walk which realizes the diameter in $\G$, would induce a minimal directed walk from two vertices in $\G\otimes \mathring{K}_m$ of length greater than the diameter of $\G \otimes \mathring{K}_m$.
Then, we obtain that
	$$\delta (\G\otimes \mathring{K}_m)=
\begin{cases}
	\hfil \delta(\G), & \qquad \text{if $\gamma(\G)\le \delta(\G)$ or $m=1$},	\\[1mm]
	\hfil \gamma(\G), & \qquad \text{if $\gamma(\G)> \delta(\G)$ and $m\ge 2$}.
\end{cases}$$
 Finally, by Lemma \ref{g=d+1} we get \eqref{diam2} as we wanted.
\end{proof}

We finish the section with an observation about the girth of the graphs of our interest.
\begin{lem} \label{lem 3gRWp}
Let $R$ be a commutative ring with identity of characteristic $char(R)=p^s$ with $p$ prime and  $s\in \N$. 
Then, we have that 
	\begin{equation} \label{gWG}
		3\le \gamma(W_R(k)) \le \gamma(G_R(k)) \le p^s.
	\end{equation}
	In particular, if $R=\ff_q$ is a field, then $3 \le \gamma(\G(k,q)) \le p$. 
\end{lem}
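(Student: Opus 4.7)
The plan is to handle the three inequalities in \eqref{gWG} separately and then deduce the finite-field case by specialization. The lower bound $\gamma(W_R(k)) \ge 3$ is immediate from the definition of girth recalled just above this lemma, since $C_n$ ranges over cycles on $n \ge 3$ vertices.

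For the middle inequality $\gamma(W_R(k)) \le \gamma(G_R(k))$, I would use that $U_R(k) = S_R(k)\cap R^* \subseteq S_R(k)$, as already noted in the introduction. Since $G_R(k)$ and $W_R(k)$ share the vertex set $R$, every arc of $G_R(k)$ is also an arc of $W_R(k)$, and hence every (directed) cycle of $G_R(k)$ is a (directed) cycle of $W_R(k)$. The inequality on girths follows.

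For the upper bound $\gamma(G_R(k)) \le p^s$, the idea is to exhibit an explicit directed cycle of length $p^s$ sitting inside the prime subring $\langle 1_R\rangle$ of $R$. Since $1_R = (1_R)^k$ and $1_R \in R^*$, we have $1_R \in U_R(k)$, so translation by $1_R$ produces an arc $x \to x + 1_R$ at every vertex $x \in R$. The hypothesis $char(R) = p^s$ means that the additive order of $1_R$ is exactly $p^s$, so the elements $0,\, 1_R,\, 2\cdot 1_R,\, \ldots,\, (p^s-1)\cdot 1_R$ are pairwise distinct with $p^s\cdot 1_R = 0$. Traversing them in order yields a directed cycle of length $p^s$ in $G_R(k)$, which forces $\gamma(G_R(k)) \le p^s$. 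The ``in particular'' assertion is then obtained by specializing to $R = \ff_q$ with $q = p^n$, so $char(R) = p$ (i.e.\ $s = 1$), and by \eqref{W=G para R=Fq} both $W_{\ff_q}(k)$ and $G_{\ff_q}(k)$ coincide with $\G(k,q)$; the chain \eqref{gWG} then collapses to $3 \le \gamma(\G(k,q)) \le p$.

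There is essentially no real obstacle here — each of the three inequalities reduces to a short verification. The only point deserving care is ensuring that the closed walk of length $p^s$ around the prime subring is a genuine cycle, i.e.\ has no repeated vertices before closing; but this is immediate from the fact that $1_R$ has additive order exactly $p^s$.
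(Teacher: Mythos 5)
Your proof is correct and follows essentially the same route as the paper: the key step in both is that $1_R=(1_R)^k\in U_R(k)$ generates arcs $x\to x+1_R$, so the additive order $p^s$ of $1_R$ yields a directed $p^s$-cycle bounding $\gamma(G_R(k))$ from above, while the subgraph relation $G_R(k)\subseteq W_R(k)$ and the definitional bound $\gamma\ge 3$ give the rest. Your explicit remark that the closed walk has no repeated vertices (since the additive order of $1_R$ is exactly $p^s$) is a small point the paper glosses over, but otherwise the arguments coincide.
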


\begin{proof}
For every $a\in R$, the vertices $a_0=a$ and $a_1=a+1_R$ in $R$ form a directed edge from $a$ to $a_1$ in $G_R(k)$ since 
$a_1-a = 1_R = (1_R)^k \in U_R(k)$ for any $k$. If we recursively define  $a_{i+1}=a_i+1_R$, we have that $w_a(m)=aa_1a_2 \cdots a_m$ is a directed walk of length $m$ in $G_R(k)$. Since the characteristic of $R$ is $p^s$, we have that 
	$$a_{p^s} = a + p^s \cdot 1_R = a$$ 
and thus $w_a(p^s)$ is a $p^s$-cycle. In this way we get $3 \le \gamma(G_R(k)) \le p^s$. Since $G_R(k)$ is a subgraph of $W_R(k)$ we obtain \eqref{gWG}, as we wanted to show.  
The remaining assertion is clear since $char(\ff_q)=p$ and $G_{\ff_q}(k)=\G(k,q)$.
\end{proof}

\section{The Waring number $g_R(k)$ for $R$ local} \label{sec4}
Here we show that the Waring problem over finite commutative local rings can be reduced to the Waring problem over finite fields. 
More precisely, the Waring number $g_R(k)$ over the finite commutative local ring $(R,\frak m)$, if exists, is given in terms of the Waring number $g(k,q)$ over the residue field $R/\frak m\simeq \ff_q$.

We first need a lemma comparing the distances between two vertices $a,b \in R$ in the graphs $W_R(k)$ and $G_R(k)$. 
\begin{lem} \label{dW=dG}
Let $(R,\frak m)$ be a finite commutative local ring with residue field $\ff_{q}$ of prime characteristic $p$. 
Let $k \in \N$  be such that $(k,p)=1$ and suppose that $\frac{q-1}{(k,q-1)}\dagger q-1$.
Then, for any $a,b \in R$ we have 
$$d_{W_R(k)}(a,b) \leq d_{G_R(k)}(a,b)$$ 
and, furthermore,  $d_{W_R(k)}(a,b)=  d_{G_R(k)}(a,b)$ if $b \not \in a+\frak{m}$.  
\end{lem}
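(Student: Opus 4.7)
The plan is to derive the easy inequality from the inclusion of connection sets, then establish the matching lower bound in the unit case by absorbing the nilpotent contributions via Lemma~\ref{prop local}(a). I would first observe that $U_R(k)\subseteq S_R(k)$ makes $G_R(k)$ a spanning subdigraph of $W_R(k)$, so every directed walk in $G_R(k)$ is a directed walk in $W_R(k)$; this immediately yields $d_{W_R(k)}(a,b)\le d_{G_R(k)}(a,b)$ for all $a,b\in R$. Since both are Cayley graphs on $(R,+)$, distances are translation-invariant, so the hypothesis $b\notin a+\frak m$ reduces the remaining assertion to showing $d_{G_R(k)}(0,c)\le d_{W_R(k)}(0,c)$ for $c:=b-a\in R\setminus\frak m=R^*$ (using that $R$ is local). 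The hypothesis $\tfrac{q-1}{(k,q-1)}\dagger q-1$, together with Corollary~\ref{-1RFq}(c), guarantees that $G_R(k)$ and therefore $W_R(k)$ are connected, so all the distances in play are finite.

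For the reverse inequality, I would set $s:=d_{W_R(k)}(0,c)$ and write $c=x_1^k+\cdots+x_s^k$ with $x_i\in R^\times$. After reordering, assume $x_1,\ldots,x_t\in R^*$ and $x_{t+1},\ldots,x_s\in\frak m$, and put $\mu:=x_{t+1}^k+\cdots+x_s^k$, which lies in $\frak m$ since $\frak m$ is an ideal. Note $t\ge 1$: otherwise $c=\mu\in\frak m$, contradicting $c\in R^*$. Since $R$ is a finite local ring with residue characteristic $p$, its cardinality is a power of $p$, so the hypothesis $(k,p)=1$ upgrades to $(k,|R|)=1$, and Lemma~\ref{prop local}(a) applies to the unit $x_1$. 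The bijection $y\mapsto y^k$ from $x_1+\frak m$ onto $x_1^k+\frak m$ then produces $y_1\in x_1+\frak m\subseteq R^*$ with $y_1^k=x_1^k+\mu$, so
$$c=y_1^k+x_2^k+\cdots+x_t^k$$
expresses $c$ as a sum of $t\le s$ elements of $U_R(k)$. Hence $d_{G_R(k)}(0,c)\le t\le s$, which combined with the opening inequality gives the claimed equality.

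The main obstacle is the absorption step, i.e.\ consolidating the nilpotent contributions $x_{t+1}^k,\ldots,x_s^k$ into a single unit $k$-th power without lengthening the representation. This is exactly what Lemma~\ref{prop local}(a) is engineered to deliver, and it is the place where the coprimality $(k,p)=1$, equivalently $(k,|R|)=1$, is indispensable, since without it the coset-to-coset $k$-power map $y\mapsto y^k$ need not be a bijection and the absorption fails.
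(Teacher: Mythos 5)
Your proof is correct, but it takes a genuinely different route from the paper's. The paper argues on a shortest directed walk $a_0\cdots a_s$ in $W_R(k)$: it locates an index where a step inside $\frak m$ is adjacent to a step outside $\frak m$, applies part $(b)$ of Lemma \ref{prop local} to replace the two arcs $a_{i-1}a_i$, $a_i a_{i+1}$ by the single arc $a_{i-1}a_{i+1}$, and derives a contradiction with minimality of $s$; the conclusion is that a geodesic in $W_R(k)$ between vertices in different cosets already lives in $G_R(k)$. You instead work globally and algebraically: after translating to $d(0,c)$ with $c\in R^*$, you separate the unit powers from the nilpotent powers in a minimal representation $c=x_1^k+\cdots+x_s^k$, collect the nilpotent contribution into a single element $\mu\in\frak m$, and absorb it into one unit power via the coset bijection of Lemma \ref{prop local}$(a)$ (the surjectivity of $y\mapsto y^k$ from $x_1+\frak m$ onto $x_1^k+\frak m$ is exactly what is needed, and your justification that $t\ge 1$ and that $y_1$ stays a unit is right). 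Your version is shorter and avoids both the case analysis on where the $\frak m$-step sits and the contradiction structure; it even yields the a priori stronger bound $d_{G_R(k)}(0,c)\le t$. What the paper's walk-surgery buys in exchange is a reusable template: essentially the same local exchange is invoked again inside the proof of Lemma \ref{dW=dG dir}$(b)$ to handle walks ending in $\frak m\smallsetminus\frak m^2$, where your global absorption would not apply verbatim because the target there is not a unit. Both arguments use the same hypotheses in the same places ($(k,p)=1$, hence $(k,|R|)=1$, for the coset bijection; the primitive-divisor condition only for connectedness/finiteness of the distances).
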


\begin{proof}
The fact that $\frac{q-1}{(k,q-1)}$ is a primitive divisor of $q-1$ implies that  $G_{R}(k)$ is connected and so $W_{R}(k)$ is connected by Corollary \ref{-1RFq} (here we use the hypothesis $(k,p)=1$). Hence, both distances $d_{W_R(k)}(a,b)$ and 
$d_{G_R(k)}(a,b)$ exist for any $a,b \in R$.
	
Since $G_{R}(k)$ is a subgraph of $W_{R}(k)$, we have that $d_{W_R(k)}(a,b) \le d_{G_R(k)}(a,b)$ for any $a,b \in R$. 
We now prove the equality in the case $a$ and $b$ are in different cosets. Thus,
assume that $b \not \in a+\frak m$. It is enough to show that
$d_{W_R(k)}(c,d) \ge d_{G_R(k)}(c,d)$.
	
Let $w_s =a_{0} \cdots a_{s}$ be a directed walk in $W_{R}(k)$ from $c=a_0$ to $d=a_s$ realizing the distance. 
Hence, we have that $a_{i+1}-a_i \in S_R(k)$ for every $i=0,\ldots,s-1$ and 
	$$d_{W_R(k)}(c,d)=s.$$
Clearly, if $s=1$, then $d_{W_R(k)}(c,d)=1=d_{G_R(k)}(c,d)$. So, we can assume that $s\ge 2$.
Notice that if $a_{i+1}-a_i \not\in \frak{m}$ for all $i$, then $w_s$ is just a walk in $G_{R}(k)$, 
since in that case $a_{i+1}-a_i \in U_R(k)$ for all  $i$. 
Hence, we may assume that there is some index $\ell$ such that $a_{\ell+1}-a_\ell \in \frak{m}$. Since $c-d\not\in \frak{m}$ and $s\ge 2$, there exists some index $i\in \{1,\ldots,s-1\}$ such that
	$$a_{i}-a_{i-1} \in \frak{m} \quad \text{and} \quad a_{i+1}-a_{i}\not\in\frak{m} \qquad \text{or else} \qquad   a_{i}-a_{i-1}\not\in\frak{m} \quad \text{and} \quad a_{i+1}-a_{i}\in\frak{m}.$$
Without loss of generality, suppose that $a_{i}-a_{i-1}\in\frak{m}$ and $a_{i+1}-a_{i}\not\in\frak{m}$.
By the choice of the $a_j$'s, we have that $a_{i+1}-a_{i}\in U_R(k)$ in this case. Then, by ($b$) in Lemma \ref{prop local} we have that $a_{i+1}-a_{i-1}\in U_R(k)$ since	$a_i\equiv a_{i-1}\pmod{\frak{m}}$. 
Therefore, by deleting the arrows $a_{i-1}a_{i}$, $a_{i}a_{i+1}$ and adding $a_{i-1}a_{i+1}$ we obtain a directed walk in $W_{R}(k)$ from $c$ to $d$ of length $s-1$, which contradicts the minimality of $s$. 
Thus, $a_{i+1}-a_i \not\in \frak{m}$ for all $i=1,\ldots,s-1$ and then the walk $w_s=a_{0} \cdots a_{s}$ is a walk in $G_{R}(k)$. By minimality of the distance we have that $d_{G_{R}(k)}(c,d)\le s = d_{W_{R}(k)}(c,d)$. Therefore, $d_{W_R(k)}(a,b)=  d_{G_R(k)}(a,b)$ as asserted.  
\end{proof}

We now relate the girth of $\G(k,q)$ with the distance between vertices of $G_R(k)$ and $W_{R}(k)$ which are in the same coset.
\begin{lem} \label{dW=dG dir}
Let $(R,\frak m)$ be a finite commutative local ring with residue field $\ff_{q}$ and put $m=|\mathfrak{m}|$.
Let $k \in \N$  be such that $(k,p)=1$ and suppose that $\frac{q-1}{(k,q-1)}\dagger q-1$.
If $m>1$, then we have the following cases:
 	\begin{enumerate}[$(a)$]
 		\item If $-1 \in U_R(k)$, then $d_{G_R(k)}(a,b)=2$ for any $a,b \in R$ such that $a\equiv b\pmod{\frak{m}}$. \sk 
 		
 		\item If $-1 \not \in U_R(k)$, then 	
 		$d_{G_R(k)}(a,b) = \gamma(\G(k,q))$ for any $a,b \in R$ such that $a\equiv b\pmod{\frak{m}}$.
 		Moreover, there exist $a,b \in R$ with $a\equiv b\pmod{\frak{m}}$ such that 
 		$d_{W_R(k)}(a,b) = \gamma(\G(k,q))$. 
  	\end{enumerate}
\end{lem}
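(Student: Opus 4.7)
My plan is to reduce both parts to distance computations inside $\Gamma(k,q)$ by exploiting the Kronecker product decomposition $G_R(k) \simeq \Gamma(k,q) \otimes \mathring{K}_m$ of Theorem \ref{Local case}. Under this isomorphism, a pair of distinct elements $a, b \in R$ with $a \equiv b \pmod{\mathfrak{m}}$ corresponds to vertices $(v, i)$ and $(v, j)$ of the product sharing the first coordinate $v = \bar a \in \ff_q$, with $i \neq j$ in $\{1,\ldots,m\}$. Since $\mathring{K}_m$ is complete with all loops, walks from $(v,i)$ to $(v,j)$ in $\Gamma(k,q) \otimes \mathring{K}_m$ correspond exactly to closed walks based at $v$ in $\Gamma(k,q)$, with the intermediate second coordinates chosen freely.

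For (a), the hypothesis $-1 \in U_R(k)$ makes $G_R(k)$ undirected by Corollary \ref{-1RFq}(b), and cosets of $\mathfrak{m}$ are independent sets in $G_R(k)$ by Proposition \ref{Thm mod Local case}, giving $d_{G_R(k)}(a,b) \geq 2$. For the reverse inequality I would take $c = a + 1$, a neighbor of $a$ since $1 = 1^k \in U_R(k)$ and $c - a = 1 \notin \mathfrak{m}$; applying Lemma \ref{prop local}(b) to the arc $ac$ together with the congruences $a \equiv b$ and $c \equiv c \pmod{\mathfrak{m}}$ yields the arc $bc$, so that $c$ is a common neighbor and $a-c-b$ is a path of length two.

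For the first claim of (b), the translation above expresses $d_{G_R(k)}(a,b)$ as the length of the shortest nontrivial closed directed walk at $v$ in $\Gamma(k,q)$. Since $\Gamma(k,q)$ has no loops (as $0 \notin U_{\ff_q}(k)$) and no directed $2$-cycles (the latter because $-1 \notin U_R(k)$ forces $-1 \notin U_{\ff_q}(k)$ by Corollary \ref{-1RFq}(a)), a standard shortening argument shows that any minimal closed directed walk is a simple cycle, hence of length at least $\gamma(\Gamma(k,q))$. Vertex-transitivity of the Cayley graph $\Gamma(k,q)$ provides a girth cycle through every $v$, yielding the desired equality $d_{G_R(k)}(a,b) = \gamma(\Gamma(k,q))$.

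For the second claim of (b), I would exhibit concrete witnesses $a = 0$ and $b \in \mathfrak{m} \setminus \mathfrak{m}^k$. Such $b$ exists because $-1 \notin U_R(k)$ forces $k \geq 2$ (otherwise $-1 \in R^* = U_R(1)$), and since $\mathfrak{m} \neq 0$ (as $m > 1$), Nakayama's lemma gives $\mathfrak{m}^k \subseteq \mathfrak{m}^2 \subsetneq \mathfrak{m}$. In any directed walk $0 = a_0 \to \cdots \to a_s = b$ in $W_R(k)$, each step $a_{i+1}-a_i \in S_R(k)$ is either a unit $k$-th power (lying in $U_R(k)$) or a nilpotent $k$-th power (lying in the ideal $\mathfrak{m}^k$); if every step were nilpotent then $b = \sum_i (a_{i+1}-a_i)$ would lie in $\mathfrak{m}^k$, contradicting the choice of $b$. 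Hence at least one step is a unit $k$-th power, and projecting to $\ff_q$ while collapsing the constant nilpotent steps yields a nontrivial closed walk at $0$ in $\Gamma(k,q)$ of length at least $\gamma(\Gamma(k,q))$, forcing $s \geq \gamma(\Gamma(k,q))$. Combined with the upper bound $d_{W_R(k)}(0,b) \leq d_{G_R(k)}(0,b) = \gamma(\Gamma(k,q))$ coming from the first claim and the inclusion $G_R(k) \subseteq W_R(k)$, this gives equality. The main obstacle is this last step, which hinges on two observations working in tandem: the Nakayama-type estimate $\mathfrak{m}^k \subsetneq \mathfrak{m}$ guaranteeing elements outside $\mathfrak{m}^k$, and the ideal property of $\mathfrak{m}^k$ absorbing every sum of nilpotent $k$-th powers, together ruling out purely-nilpotent walks.
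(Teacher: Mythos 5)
Your proof is correct and follows essentially the same route as the paper: part (a) via a common neighbour supplied by Lemma \ref{prop local}(b), part (b) via the correspondence between walks joining two vertices of a coset of $\frak m$ and nontrivial closed walks in $\G(k,q)$ together with transitivity, and a witness $b\in\frak m$ lying outside a higher power of $\frak m$ so that it cannot be reached by nilpotent steps alone. The one local difference is the final lower bound $d_{W_R(k)}(0,b)\ge \gamma(\G(k,q))$: the paper takes a minimal walk and shortens it by the merging argument of Lemma \ref{dW=dG} to conclude it lies entirely in $G_R(k)$, whereas you project an arbitrary walk to $\ff_q$ and count its unit steps, which form a nontrivial closed walk at $0$ --- a slightly more direct finish that reaches the same conclusion.
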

 
\begin{proof}
($a$) This is a consequence of Proposition~\ref{Thm mod Local case} and Theorem \ref{Local case} (here we need $(k,p)=1$). 
 	In fact, if $a\equiv b \pmod{\frak m}$ then they are not neighbors in $G_{R}(k)$. 
 	Also, if $c\in R$ is such that $\bar{a}$ and  $\bar{c}$  (the class of $a$ and $c$ in the residual field $\ff_{q}$) are neighbors
 	in $\G(k,q)$ then $c$ is a common neighbor of $a$ and $b$ by Proposition~\ref{Thm mod Local case} since, in this case, $\G(k,q)$ and $G_{R}(k)$ are both undirected graphs because of the assumption $-1 \in U_R(k)$. Therefore, we have that
 	$d_{G_R(k)}(a,b)=2$, as asserted.
 	
\noindent ($b$) 
Assume now that $-1\not \in U_R(k)$, hence the graphs $G_R(k)$ and $W_R(k)$ are directed. Notice that in this case we have that
 	$$U_R(k) \cap (-U_R(k)) = \varnothing \qquad \text{and} \qquad S_R(k) \cap (-S_R(k)) = \varnothing,$$ 
so that we cannot replicate the proof for the undirected case, 
since if there is an arc from $\bar{a}$ to $\bar{c}$ in $\G(k,q)$, then there is not an arc from $\bar{c}$ to $\bar{a}$, for any $c\in R$. 
 	
Observe that any directed walk from $a$ to $b$ in $G_{R}(k)$ induces a directed walk from $\bar{a}$ to $\bar{b}$ in $\G(k,q)$ of the same length, but since $\bar{a}=\bar{b}$, this is a directed cycle which begins and ends in $\bar{a}$. Reciprocally, any directed cycle which begins and ends in $\bar{a}$ induces a directed walk from $a$ to $b$ with the same length (not necessarily unique). So, 
$d_{G_{R}(k)}(a,b)$ is exactly the length of the minimum directed cycle which begins and ends in $\bar{a}$. Now by taking into account that $\G(k,q)$ is an arc-transitive graph, we have that the girth can be realized in any vertex of the graph, that is $d_{G_{R}(k)}(a,b)=\gamma(\G(k,q))$, as asserted. 
	
Finally, by Lemma \ref{dW=dG} we have that  
	$$d_{W_{R}(k)}(a,b)\le d_{G_{R}(k)}(a,b)=\gamma(\G(k,q)).$$
Notice that in $W_{R}(k)$ may exist arcs in any coset of $\frak{m}$ in $R$ and so two vertices in the same coset of $\frak{m}$ could be neighbors in $W_{R}(k)$.
Without loss of generality we can assume that $c=0$ and so it is enough to find some $d\in \frak{m}$ such that $d_{W_{R}(k)}(0,d)=\gamma(\G(k,q))$.
	
For this purpose, notice that the assumption $-1\not \in U_R(k)$ implies that $k>1$, since $-1 \in U_{R}(k)$ otherwise. On the other hand, if we consider the $s$-th power of $\frak{m}$
	$$\frak{m}^{s}= \Big\{  \sum_{1\le i \le t} n_{i,1} n_{i,2} \cdots n_{i,s}  : n_{i,1}, n_{i,2}, \ldots, n_{i,s} \in \frak{m}, t \in \mathbb{N} \Big\},$$
since $\frak{m}$ has only nilpotents elements, then there exists $M \in \mathbb{N}$ such that $\frak{m}^{M}=\{0\}$. This implies that $\frak{m}^2 \subsetneq \frak{m}$, since if $\frak{m}^2=\frak{m}$ then $\frak{m}^{\ell}=\frak{m}$ for all $\ell \in \mathbb{N}$.
	
\noindent \textit{Claim}: If $d\in \frak{m}\smallsetminus \frak{m}^2$, then $d$ cannot be written as a sum of $k$-th powers of nilpotent elements.

Since $k>1$, we have that if $x\in \frak{m}$ then $x^k\in \frak{m}^2$, then if we assume that
	$$d=x_1^{k}+ \cdots + x_{s}^k, \qquad x_1,\ldots,x_s \in \frak{m},$$
then $d\in \frak{m}^2$ since $\frak{m}^2$ is an ideal, which is a contradiction. 
Therefore, $d$ cannot be written as a sum of $k$-th powers of nilpotent elements.

So, let $d\in \frak{m}\smallsetminus \frak{m}^2$.
The above claim shows that there is not a directed walk in $W_{R}(k)$ from $0$ to $d$
induced only for nilpotents arrows. That is if $w_s=a_0\cdots a_{s}$ is a walk from $0$ to $d$ in $W_{R}(k)$, then $a_{i+1}-a_{i}\not \in \frak{m}$ for some index $i$.
Now if we take a walk as above $w_s$ from $0$ to $d$, realizing the distance $d_{W_R(k)}(0,d)$. 
In the same way as in Lemma \ref{dW=dG}, we can show that
$a_{i+1}-a_i\not \in \frak{m}$ for $i=0,\ldots,s-1$.
Hence, the walk $w_s$ is a walk in $G_{R}(k)$ from $0$ to $d$ and hence
	$$\gamma(\G(k,q))=d_{G_{R}(k)}(0,d)\le s =d_{W_{R}(k)}(0,d).$$ 
Therefore, we obtain that $d_{W_{R}(k)}(0,d)=\gamma(\G(k,q))$, 
as asserted.
 \end{proof}

Recall that the graph $G_R(k)$ is undirected if and only if $-1\in U_R(k)$ and the conditions for that are given in $(a)$ of Corollary \ref{-1RFq}. Thus, we have 
\begin{equation} \label{nodirigido}
G_R(k) \text{ is undirected } \quad \Leftrightarrow \quad \text{if $q$ is even or else if $q$ is odd and $(k,q-1) \mid \tfrac{q-1}2$}.
\end{equation} 

We can now state and prove the announced result.
\begin{thm} \label{teo waring Local case}
Let $(R,\frak m)$ be a finite commutative local ring with identity with residue field $R/\frak m \simeq \ff_{q}$ of prime characteristic $p$. 
Let $k \in \N$ be such that $(k,p)=1$ and put $k'=(k,q-1)$. 
If $\frac{q-1}{k'} \dagger q-1$, then $g_R(k)$ exists, $W_R(k)$ is connected, and 
	$$\delta(W_{R}(k))=g_R(k)= \delta(G_{R}(k)).$$ 
Moreover, in this case we have the following:
	\begin{enumerate}[$(a)$]
		\item If $G_R(k)$ is undirected, then 
		\begin{equation} \label{gR local}
			g_{R}(k)= 
			\begin{cases}
				\hfil 1, & \qquad \text{if $k'=1$ and $|\mathfrak{m}| = 1$},	\\[1mm]
				\hfil 2, & \qquad \text{if $k'=1$ and $|\mathfrak{m}| \ge 2$}, \\[1mm] 
				g(k,q), 
				& \qquad \text{if $k'>1$}. 
			\end{cases}
		\end{equation}
		
		\item If $G_R(k)$ is directed, then 
		\begin{equation} \label{gR local dir}
		g_{R}(k) = 
				\begin{cases}
					\hfil g(k,q), & \qquad \text{if $\gamma(\G(k,q)) \le g(k,q)$ or $|\mathfrak{m}|=1$},	\\[1mm]
					\hfil g(k,q)+1, & \qquad \text{if $\gamma(\G(k,q))> g(k,q)$ and $|\mathfrak{m}| \ge 2$},  
				\end{cases} 
		\end{equation}
where $\G(k,q)$ is the GP-graph defined in \eqref{GPkq}.
\end{enumerate} 
 Therefore, $g_R(k)=g_{R'}(k)$ for any pair of finite commutative local rings $R,R'$ having the same (up to isomorphism) residue field. 
\end{thm}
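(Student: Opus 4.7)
The plan is to combine the Kronecker decomposition $G_R(k) \simeq \Gamma(k,q) \otimes \mathring{K}_m$ from Theorem \ref{Local case} with the diameter formula for $\Gamma \otimes \mathring{K}_m$ in Proposition \ref{lem1} and the bridge $g_R(k)=\delta(W_R(k))$ from Theorem \ref{eqbound}, producing the chain $\delta(W_R(k))=g_R(k)=\delta(G_R(k))$ and then reading off the explicit values from Proposition \ref{lem1}.

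First I would settle existence and the easy half of the main equality. The hypothesis $\tfrac{q-1}{k'}\dagger q-1$ is exactly the one of Corollary \ref{-1RFq}(c), so $G_R(k)$ is connected; since $W_R(k)\supseteq G_R(k)$ on the same finite vertex set, $W_R(k)$ is connected too and has finite diameter. Theorem \ref{eqbound}(b) then supplies existence of $g_R(k)$ together with $g_R(k)=\delta(W_R(k))\le \delta(G_R(k))$. To evaluate $\delta(G_R(k))$ I would invoke Theorem \ref{Local case} and then Corollary \ref{-1RFq}(b), which transfers undirectedness between $G_R(k)$ and $\Gamma(k,q)$. In the undirected case, Proposition \ref{lem1}(a) yields the three values $1,2,g(k,q)$ according to whether $\Gamma(k,q)=K_q$ (equivalent to $k'=1$) and to the size of $m$, using $g(k,q)=\delta(\Gamma(k,q))$ from \eqref{g=d}, which reproduces \eqref{gR local}. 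In the directed case, the arc-transitivity of $\Gamma(k,q)$ lets Proposition \ref{lem1}(b) apply, giving $g(k,q)$ or $g(k,q)+1$ according to whether $\gamma(\Gamma(k,q))>g(k,q)$ and $m\ge 2$, matching \eqref{gR local dir} (via Lemma \ref{g=d+1}).

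The decisive step is the reverse inequality $\delta(W_R(k))\ge \delta(G_R(k))$, which I would obtain by exhibiting a pair $a,b\in R$ realizing $\delta(G_R(k))$ inside $W_R(k)$. Whenever the diameter of $G_R(k)$ is achieved by a pair in different cosets of $\frak m$ --- as in all the generic sub-cases of Proposition \ref{lem1} --- Lemma \ref{dW=dG} gives $d_{W_R(k)}(a,b)=d_{G_R(k)}(a,b)=\delta(G_R(k))$ for free. The genuine obstacle lies in the two same-coset corners, namely the undirected case with $\Gamma(k,q)=K_q$ and $m\ge 2$ (where $\delta(G_R(k))=2$) and the directed case with $\gamma(\Gamma(k,q))>g(k,q)$ and $m\ge 2$ (where $\delta(G_R(k))=\gamma(\Gamma(k,q))$). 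The directed same-coset case is dispatched by the second half of Lemma \ref{dW=dG dir}(b), which directly furnishes $a\equiv b\pmod{\frak m}$ with $d_{W_R(k)}(a,b)=\gamma(\Gamma(k,q))$. For the undirected same-coset case I would appeal to Nakayama's lemma, which gives $\frak m^2\subsetneq \frak m$, and pick any $\mu\in\frak m\setminus\frak m^2$; since $\frak m$ is prime, $\mu=x^k$ with $x\ne 0$ would force $x\in\frak m$ and hence $\mu\in\frak m^k\subseteq\frak m^2$, a contradiction, so $\mu\notin S_R(k)$ and $d_{W_R(k)}(0,\mu)\ge 2=\delta(G_R(k))$.

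Putting the three stages together yields $g_R(k)=\delta(W_R(k))=\delta(G_R(k))$ together with the formulas in \eqref{gR local}--\eqref{gR local dir}. The closing sentence of the theorem is then automatic, since the right-hand sides of \eqref{gR local}--\eqref{gR local dir} depend only on $q$, $k$, and the dichotomy $|\frak m|=1$ vs.\ $|\frak m|\ge 2$. The main obstacle throughout is controlling the extra edges in $W_R(k)$ arising from $k$-th powers of nilpotents: they could a priori create shortcuts that strictly lower the diameter below that of $G_R(k)$, and ruling this out in the undirected same-coset corner is precisely the role of the Nakayama argument sketched above.
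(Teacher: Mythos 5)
Your proposal is correct and follows essentially the same route as the paper: connectedness plus Theorem \ref{eqbound} to get $g_R(k)=\delta(W_R(k))\le\delta(G_R(k))$, Proposition \ref{lem1} applied to the decomposition of Theorem \ref{Local case} to evaluate $\delta(G_R(k))$, and the reverse inequality via Lemma \ref{dW=dG} for diameter-realizing pairs in different cosets of $\frak m$ and Lemma \ref{dW=dG dir} for same-coset pairs. Your explicit Nakayama/primality argument that $W_R(k)$ is not complete in the corner $k'=1$, $|\frak m|\ge 2$ merely spells out what the paper asserts in one line (and both versions tacitly use $k\ge 2$ there).
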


\begin{proof}
The hypothesis $\frac{q-1}{k'} \dagger q-1$ implies that $g(k,q)$ exists, and so the graphs $G_{\ff_q}(k)$ and $G_{R}(k)$ are connected by Theorem~\ref{Local case} and Proposition~\ref{lem1} (here we need $(k,p)=1$). 
Therefore $W_{R}(k)$ is also connected by ($b$) in Theorem \ref{eqbound}.
We recall that it is known that $g(k,q)=g(k',q)$, that $\G(1,q)$ is the complete graph $K_q$ and that $g(1,q)=1$.

Now, by Theorem \ref{eqbound} we have that $\delta(W_{R}(k))\leq \delta(G_{R}(k))$ and hence
it is enough to prove that $\delta(G_{R}(k))\leq \delta(W_{R}(k))$.	

If $m=1$, then $R$ is a finite field and so $W_{R}(k)=G_R(k)$ and certainly 
$\delta(W_{R}(k)) = \delta(G_R(k))$.
Thus, we assume that $m>1$. 
The hypothesis $k>1$ and $m>1$ imply that $W_{R}(k)$ is not the complete graph and so 
$$\delta(W_{R}(k))\ge 2.$$

Now, let $a,b\in R$ be two vertices in $G_R(k)$ realizing the diameter, that is we have that 
$$\delta(G_{R}(k))=d_{G_R(k)}(a,b).$$ 
First, suppose that $b \in a+\frak{m}$. The item ($a$) of Lemma \ref{dW=dG dir} implies that
$d_{G_R(k)}(a,b)=2$ if $-1\in U_{R}(k)$, in this case we obtain that 
$$\delta(W_{R}(k))=2=\delta(G_{R}(k)).$$
Now suppose that $-1\not \in U_R(k)$, the item ($b$) of Lemma \ref{dW=dG dir} implies that 
$$\delta(G_{R}(k))=d_{G_{R}(k)}(a,b)=\gamma(\G(k,q)),$$ 
the same item says that 
there exist $c,d \in R$ with $d\in c+\frak{m}$ such that $d_{W_R(k)}(c,d)=\gamma(\G(k,q))$, by maximality of the diameter we obtain that
$$\delta(G_{R}(k))=\gamma(\G(k,q))=d_{W_R(k)}(c,d)\le \delta(W_{R}(k)).$$
Therefore $\delta(G_{R}(k))=\delta(W_{R}(k))$, in this case as well.

We can now assume that $a,b$ are not in the same coset of $\frak{m}$ in $R$,  thus 
by Lemma \ref{dW=dG} we have that 
$$\delta(G_{R}(k))=d_{G_R(k)}(a,b)= d_{W_R(k)}(a,b) \le \delta(W_{R}(k)).$$
Therefore, we obtain that $\delta(G_{R}(k))=\delta(W_{R}(k))$, as asserted.

To prove \eqref{gR local}, since $(k,p)=1$ we have the decomposition $G_R(k)=G_{\ff_q}(k) \otimes \mathring{K}_m$ and hence we can apply Lemma~\ref{lem1}, thus obtaining 
	$$\delta(W_R(k))=\delta(G_R(k))=\delta(G_{\ff_q}(k))=g(k,q).$$
	
Finally, the assertion \eqref{gR local dir} is a direct consequence of item ($c$) of Lemmas~\ref{lem1}, \ref{dW=dG} and \ref{dW=dG dir}. 
The remaining assertions in the statement are obvious from \eqref{gR local}, and the result follows.
\end{proof}

\begin{rem}
Theorem \ref{teo waring Local case} reduces the study of Waring numbers over local rings to Waring numbers over finite fields. The advantage of this is that pretty much is known for the numbers $g(k,q)$. A recent list of the main known exact values, upper and lower bounds for $g(k,q)$ is given in Section 2 of \cite{PV6} (see also the handbook \cite{MP}).
\end{rem}

Notice that by the previous theorem we know which are all possible values that the Waring number $g_R(k)$ can take for any local ring of fixed size. In fact, if $(R,\frak m)$ is a local ring with residue field $R/\frak m \simeq \ff_q$ and 
$|R|=p^t$ for some prime number $p$, then 
$$g_R(k) \in V \cup (V+1) \qquad \text{ where } \qquad V = \{1,g(k,p), g(k,p^2), \ldots, g(k,p^{t-1})\}.$$

We can adapt any bound for Waring numbers over finite fields to Waring numbers over rings.
\begin{rem}
Assume that $R$ is a finite commutative local ring with identity with residue field isomorphic to $\ff_{q}$ of prime characteristic $p$ and that $g_R(k)$ exists. 

\noindent ($i$) 
By Theorem \ref{teo waring Local case}, we have that 
	$$g_R(k) \le g(k,q)+1.$$ 
We can combine this with any bound for $g(k,q)$. For instance, if $a(k,q) \le g(k,q) \le b(k,q)$ then we have
\begin{equation} \label{bounds}
	a(k,q) \le g_R(k) \le b(k,q)+1.
\end{equation}
  	
\noindent ($ii$) 
Using a result in \cite{GlR}, Cipra proved that if $g(k,q)$ exists and $k < \sqrt q$ then 
$g(k,q) \le 8$ (see \cite[Corollary 1]{Ci}). Cipra also found in \cite[Theorem 4]{Ci} the following bounds 
$g(k,p^2) \le 16 \sqrt{k+1}$ and $g(k,p^m) \le 10 \sqrt{k+1}$ for $n\ge 3$. By Cipra's bounds and  
\eqref{bounds} we arrive at 
\begin{equation} \label{bounds2}
	g_R(k) \le \begin{cases}
		\hfil	9, 		& \qquad \text{if $1 < k < \sqrt{q}$}, \\[1mm]
		a\sqrt{k+1}+1,	& \qquad \text{if $\sqrt{q} \le k \le q-1$}, \end{cases}
\end{equation}
where $a=16$ if $q=p^2$ and $a=10$ if $q=p^m$ with $m\ge 3$.
\end{rem}

\subsubsection*{The Waring's function on rings}
Let $R$ be any ring and $k$ any positive integer and consider the set of pairs $(R,k)$ such that $g_R(k)$ exists, that is 
	$$\mathbb{W}_{rings} = \{(R,k):g_R(k) \in \N\} \subset \mathcal{R} \times \N,$$ 
where $\mathcal{R}$ is the set of rings.
The \textit{Waring's function on rings} is 
	$$g: \mathbb{W}_{rings} \longrightarrow \N, \qquad (R,k) \mapsto g_R(k).$$
In \cite{PV6}, we proved that $g$ is a surjective function if we restrict the domain to finite fields. 
We now prove that $g$, restricted to commutative rings with identity which are not fields, is also surjective. In fact, it will be enough to restrict ourselves to  (finite commutative) local rings.

\begin{prop} \label{g sobre}
	Given any $b\in \N$ there exist $k \in \N$ and a ring $R$ such that $g_R(k)=b$.
	Moreover, $R$ can be taken to be either a finite field or a finite commutative local ring with identity which is not a field.
\end{prop}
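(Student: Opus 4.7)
The approach is to reduce to the finite-field case (already known surjective by \cite{PV6}) via Theorem \ref{teo waring Local case}, which expresses $g_R(k)$ for a finite commutative local ring $(R,\frak m)$, under its arithmetic hypotheses, as either $g(k,q)$ or $g(k,q)+1$, where $\ff_q\simeq R/\frak m$ is the residue field.

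For $b=1$ the cleanest witness is a prime field: with $R=\ff_p$ and $k=1$ we have trivially $g_R(1)=1$, and in fact under the hypotheses of Theorem \ref{teo waring Local case} the value $1$ forces $|\frak m|=1$ by \eqref{gR local}, so this case genuinely needs a field.

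For $b\ge 2$, by the field-case surjectivity from \cite{PV6} pick a prime power $q$ of characteristic $p$ and $k\in\N$ with $(k,p)=1$, $k\mid q-1$, $k>1$ and $g(k,q)=b$. Define $R:=\ff_q[t]/(t^2)$: this is a finite commutative local ring (not a field) with unique maximal ideal $\frak m=(t)$ of cardinality $q\ge 2$, residue field $\ff_q$, and $(k,|R|)=(k,q^2)=1$; the primitive-divisor hypothesis $\tfrac{q-1}{k}\dagger q-1$ of Theorem \ref{teo waring Local case} is automatic, since the existence of $g(k,q)$ is equivalent to connectedness of $\G(k,q)$. By Corollary \ref{-1RFq} the graph $G_R(k)$ inherits the (non-)directedness of $G_{\ff_q}(k)$. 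If $G_{\ff_q}(k)$ is undirected (for instance when $q$ is even or $k\mid\tfrac{q-1}{2}$), formula \eqref{gR local} gives $g_R(k)=g(k,q)=b$ at once; otherwise, \eqref{gR local dir} yields $g_R(k)\in\{b,b+1\}$, and equality with $b$ holds as long as $\gamma(\G(k,q))\le b$, which by Lemma \ref{lem 3gRWp} is automatic whenever $b\ge p$.

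The main obstacle is to guarantee, for every $b\ge 2$, that at least one of these two favorable conditions can be met by a suitable field-side witness, so that Theorem \ref{teo waring Local case} outputs precisely $b$ and not $b+1$. This is handled by exploiting the flexibility in choosing $(q,k)$: on the one hand, one may pick a small prime characteristic $p\le b$, so that the girth bound $\gamma(\G(k,q))\le p\le b$ of Lemma \ref{lem 3gRWp} forces equality in the directed case; on the other hand, one may pass from $q$ to a suitable power $q^t$ (controlling the effect on $g(k,q)$ via the reduction formulas of Section \ref{sec5red}) so that $k\mid\tfrac{q^t-1}{2}$ holds and the graph becomes undirected. Either route produces a non-field finite commutative local ring $R$ with $g_R(k)=b$, completing the proof.
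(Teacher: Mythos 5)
Your overall strategy --- reduce to the finite-field case and transfer the value through Theorem \ref{teo waring Local case} --- is the same as the paper's, but your execution leaves the decisive step unproved. After choosing an arbitrary field witness $(k,q)$ with $g(k,q)=b$, you must rule out the outcome $g_R(k)=b+1$ in the directed case of \eqref{gR local dir}, and you only sketch two possible fixes without carrying either out. The first (``pick a small prime characteristic $p\le b$'') presupposes that a witness with $g(k,q)=b$ and $\operatorname{char}\ff_q\le b$ exists; this does not follow from the bare surjectivity statement of \cite{PV6} that you invoke, and would have to be extracted from an explicit construction. The second (``pass from $q$ to a suitable power $q^t$, controlling the effect on $g(k,q)$ via the reduction formulas'') is not viable as stated: the reduction formulas of Section \ref{sec5red} change both $k$ and the value of the Waring number (multiplying it by the extension degree $b$), so they do not let you keep the value $b$ while arranging $k\mid\tfrac{q^t-1}{2}$.

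The paper closes this gap by never entering the directed case at all: it fixes a prime $p$ coprime to $b$, takes $a$ with $p^a\equiv 1\pmod b$, and uses the explicit witness $k=\frac{p^{ab}-1}{b(p^a-1)}$, $q=p^{ab}$ from Lemma 4.2 and Corollary 4.3 of \cite{PV6}, for which $g(k,q)=b$. Then $\frac{q-1}{k}=b(p^a-1)$ is even whenever $p$ is odd (and $q$ is even when $p=2$), so $G_R(k)$ is undirected by \eqref{nodirigido}; moreover $b(p^a-1)$ is a primitive divisor of $p^{ab}-1$ by Lemma 3.3 of \cite{PV7}, so case $(a)$ of Theorem \ref{teo waring Local case} with $k'=k>1$ gives $g_R(k)=g(k,q)=b$ for every finite commutative local ring $R$ with residue field $\ff_q$. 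If you replace your ``arbitrary witness plus patch'' by this explicit choice of $(k,q)$, your argument becomes complete; the rest of your write-up (the $b=1$ case, the choice $R=\ff_q[t]/(t^2)$, and the observation that the primitive-divisor hypothesis is equivalent to the existence of $g(k,q)$) is correct.
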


\begin{proof}
	Since $g_R(1)=1$, we can assume that $b\ge 2$. If $R=\ff_q$ is a finite field, then Proposition~4.7 in \cite{PV6} ensures that there is some $k \in \N$ such that $g_R(k)=g(k,q)=b$. So suppose that $R$ is not a finite field.
	Let $p$ be any prime which is coprime with $b$. Thus $p$ is a unit in $\Z_b$ and there is $a\in \N$ such that $p^a\equiv 1 \pmod b$.
	By Lemma 4.2 and Corollary 4.3 in \cite{PV6} we have that $b \mid \frac{p^{ab}-1}{p^a-1}$ and $g(k,q)=b$ where $k= \frac{p^{ab}-1}{b(p^a-1)}$ and $q=p^{ab}$. 
	Let $R$ be any commutative local ring with identity whose residue field is isomorphic to $\ff_q$. Now, since $k':=(k,q-1)=k>1$, by \eqref{nodirigido} we have that $G_R(k)$ is non-directed. 
	Taking $c=p^a-1$ in Lemma 3.3 of \cite{PV7}, we get that $\frac{q-1}{k'}=b(p^a-1)$ is a primitive divisor of $p^{ab}-1$.
	Thus, by the third case in ($a$) of Theorem \ref{teo waring Local case} we have that
	$g_R(k)=g(k,q)=b$, as we wanted to see.
\end{proof}

\section{A reduction formula for $g_R(k)$ with $R$ a local ring} \label{sec5red}
In this short section we present a reduction formula for Waring numbers over a finite commutative local ring with identity $R$. In some particular cases, the Waring number over a finite commutative local ring $R_{ab}$ of size $p^{ab}$ can be expressed as $b$ times the Waring number over a finite local commutative local ring of size $p^a$, that is 
$$g_{R_{ab}}(k) = b \cdot g_{R_a}(k),$$ 
for suitable values of $k$. 
This kind of result holds for finite fields in the case when the associated GP-graph $\G(k,p^{ab})$ is Cartesian decomposable 
(see \cite{PV7}) and we will extend it to the case of finite commutative local rings with identity.

Cartesian decomposable GP-graphs, that is those $\G(k,q)$ that can be decomposed as a Cartesian product for certain graphs 
$\G_1, \ldots, \G_t$, i.e.\@
$$\G(k,q) = \G_1 \square \cdots \square \G_t$$ 
were studied by Pearce and Praeger. In \cite{PP} they showed that $\G(k,p^m)$ is Cartesian decomposable if and only if 
	$$\tfrac{p^m-1}{k}=bc \qquad \text{with} \qquad b>1, \: b\mid m \: \text{ and } \: c \dagger p^{\frac mb}-1.$$ 
	In this case, $\G$ turns up to be the product of a single GP-graph, that is 
	$$\G \simeq \square^b \G_0 \qquad \text{with} \qquad \G_0 = \G \big(\tfrac{p^{\frac mb}-1}{c},p^{\frac mb}\big),$$ 
where $b$ and $c$ are integers as described above.

We begin with the following basic result on the girth of iterated Cartesian products of graphs.
\begin{lem} \label{girth prod}
 Let $\G=\square^b \G_0$ be the Cartesian product of $b$ copies of a fixed graph $\G_0$ with cycles. 
 Then, the girths of $\G$ and $\G_0$ satisfy 
 \begin{equation} \label{des girths}
 	\gamma(\G) \le \gamma(\G_0).
 \end{equation}
Moreover, if $\G_0$ is undirected, then $\gamma(\G)=3$ if $\gamma(\G_0)=3$ or $\gamma(\G)=4$ if $\gamma(\G_0)\ge 4$. 
\end{lem}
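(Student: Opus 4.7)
The plan is to split the lemma into two statements: the girth inequality \eqref{des girths}, and the pinned-down values of $\gamma(\G)$ in the undirected case. Both parts hinge on the coordinate-change characterization of adjacency in a Cartesian product, namely that two vertices of $\square^b \G_0$ are adjacent if and only if they differ in exactly one coordinate and the projections at that coordinate are adjacent in $\G_0$.

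For the inequality $\gamma(\G) \le \gamma(\G_0)$, I would embed $\G_0$ as an induced subgraph of $\G = \square^b \G_0$. Fixing any base vertex $x_0 \in V(\G_0)$, the map $u \mapsto (u, x_0, \ldots, x_0)$ is an isomorphism onto its image: two such vertices agree in coordinates $2$ through $b$, so they are adjacent in $\G$ precisely when their first coordinates are adjacent in $\G_0$. A (directed) cycle in $\G_0$ of length $\gamma(\G_0)$ therefore lifts to a cycle of the same length in $\G$, which gives the desired inequality.

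For the second part, assume $\G_0$ is undirected, so $\G$ is also undirected and simple. If $\gamma(\G_0)=3$, the first part yields $\gamma(\G)\le 3$, and simplicity forces $\gamma(\G)\ge 3$, so $\gamma(\G)=3$. If $\gamma(\G_0) \ge 4$, then (assuming $b \ge 2$, since the case $b=1$ is trivial) I would exhibit a $4$-cycle in $\G$ by choosing any edge $u \sim u'$ of $\G_0$ and checking that the vertices $(u,u,x_0,\ldots)$, $(u',u,x_0,\ldots)$, $(u',u',x_0,\ldots)$, $(u,u',x_0,\ldots)$ form one; this gives $\gamma(\G)\le 4$. The matching lower bound amounts to showing that $\G$ is triangle-free whenever $\G_0$ is. Given a hypothetical triangle $a,b,c$ in $\G$, the edges $ab$ and $bc$ change coordinates $i$ and $j$ respectively; if $i\ne j$, then $a$ and $c$ disagree in two positions and cannot be adjacent, whereas if $i=j$ then $a,b,c$ agree outside position $i$ and their $i$-th coordinates form a triangle in $\G_0$, contradicting $\gamma(\G_0)\ge 4$.

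The main obstacle is the triangle-exclusion step in the case $\gamma(\G_0)\ge 4$; the rest is a direct unwinding of the definition of $\square$. I expect no technical difficulty beyond keeping track of which coordinates change along each edge of a putative triangle.
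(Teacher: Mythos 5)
Your proof is correct and follows essentially the same route as the paper: lifting a shortest cycle of $\G_0$ into $\G$ with the remaining coordinates held constant to get $\gamma(\G)\le\gamma(\G_0)$, and exhibiting an explicit $4$-cycle across two coordinates in the undirected case with $b\ge 2$. You are in fact slightly more thorough than the paper, which asserts $\gamma(\G)=\min\{\gamma(\G_0),4\}$ without spelling out the triangle-freeness argument you give to justify the lower bound $\gamma(\G)\ge 4$ when $\gamma(\G_0)\ge 4$.
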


\begin{proof}
We consider the cases when $\G$ is directed or undirected separately. 
If $\G$ is undirected, then the girth of $\G$ is the minimum between the girth of $\G_0$ and the length of the minimum $n$-cycle $C_n$ generated by the product. Notice that there are always $4$-cycles in $\G$. In fact, if $v_1w_1$ and $v_2w_2$ are two edges in $\G_0$ then the vertices ${\bf v}_1=(v_1,v_2,v_3,\ldots,v_b)$, ${\bf v}_2=(w_1,v_2,v_3,\ldots,v_b)$, 
${\bf v}_3=(w_1,w_2,v_3,\ldots,v_b)$ and ${\bf v}_4=(v_1,w_2,v_3,\ldots,v_b)$ form the $4$-cycle ${\bf v}_1{\bf v}_2{\bf v}_3{\bf v}_4{\bf v}_1$ in $\G$. In this way, we have that
$$\gamma(\G) = \min\{\gamma(\G_0), \gamma(C_4) \} = \min\{\gamma(\G_0), 4 \},$$  
from which \eqref{des girths} follows.

On the other hand, if $\G_0$ is directed, then $\G$ is also directed.
Since $\G_0$ has cycles by hypothesis, then  $\gamma(\G_0)$ exists, so we can consider 
$v_0 v_1 \cdots v_{\ell}$  with $v_{0}=v_{\ell}$ a directed cycle in $\G_0$ 
of length $\ell= \gamma(\G_0)$. Now, let $v$ be a fixed vertex in $\G_0$ and let us consider the vertices 
$${\bf v}_i = (v_i,v,\ldots v)\in V(\G) \qquad \text{for $i=0,\ldots,\ell$}.$$
Hence, we have that ${\bf v}_0={\bf v}_\ell$ and ${\bf v}_0 {\bf v}_1 \cdots {\bf v}_{\ell-1} {\bf v}_{\ell}$ is a directed closed walk in $\G$ of length $\gamma(\G_0)$. Thus, by minimality of the girth we obtain that
$\gamma(G)\le \gamma(\G_0)$, as asserted.
\end{proof}

We are now in a position to prove the reduction formula for Waring numbers over local rings.

\begin{thm} \label{teo reduction}
Let $p$ be a prime and $a,b,c \in \N$ with $b>1$ such that $c \dagger p^a-1$ and $bc \dagger p^{ab}-1$. 
Let $R_{ab}$ and $R_{a}$ be any finite commutative local rings with identity whose residue fields have sizes $p^{ab}$ and $p^a$, respectively. 
Thus, both Waring numbers $g_{R_{ab}}(\tfrac{p^{ab}-1}{bc})$ and $g_{R_a}(\tfrac{p^{a}-1}{c})$ exist and the following reduction formulas hold for them.
	\begin{enumerate}[$(a)$]
		\item If either $p$ is even or else $p$ is odd and $bc$ is even, then we have
			\begin{equation} \label{red fla}
				g_{R_{ab}}(\tfrac{p^{ab}-1}{bc}) = b g_{R_a}(\tfrac{p^a-1}c).
			\end{equation} 
		\item If $p$ is odd and $bc$ is odd  then we have
			\begin{equation} \label{red fla dir}
			g_{R_{ab}}(\tfrac{p^{ab}-1}{bc}) = 	\begin{cases}
			\hfil bg_{R_a}(\tfrac{p^a-1}c), 	& \qquad \text{if $\gamma(\G(\tfrac{p^a-1}c,p^a)) \le g(\tfrac{p^a-1}c,p^a)$ or $m_a=1$},	\\[1.75mm]
			  b(g_{R_a}(\tfrac{p^a-1}c)-1), 	& \qquad \text{if $\gamma(\G(\tfrac{p^a-1}c,p^a)) > g(\tfrac{p^a-1}c,p^a)$ and $m_a\ge 2$},  
			\end{cases} 
			\end{equation}  
where $m_a$ is the size of the maximal ideal of $R_{a}$. 
	\end{enumerate}	
\end{thm}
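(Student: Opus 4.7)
The overall strategy is to combine Theorem \ref{teo waring Local case} (which reduces Waring numbers over a local ring to Waring numbers over its residue field) with the Cartesian decomposition of the associated generalized Paley graph. Together, these should let us move from $g_{R_{ab}}(\tfrac{p^{ab}-1}{bc})$ to $g(\tfrac{p^{ab}-1}{bc},p^{ab})$, then to $b\cdot g(\tfrac{p^a-1}c,p^a)$, and finally back to $g_{R_a}(\tfrac{p^a-1}c)$.

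\textbf{Step 1 (existence via Theorem \ref{teo waring Local case}).} For each of the two Waring numbers, set $k=\tfrac{q-1}{N}$ with $N=bc$, $q=p^{ab}$ in the first case and $N=c$, $q=p^a$ in the second. Since $k\mid q-1$, we have $k'=(k,q-1)=k$, so the primitive divisor condition $\tfrac{q-1}{k'}\dagger q-1$ is exactly the hypotheses $bc\dagger p^{ab}-1$ and $c\dagger p^a-1$. The coprimality $(k,p)=1$ is automatic, so Theorem \ref{teo waring Local case} applies and both Waring numbers exist.

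\textbf{Step 2 (two-step decomposition of $G_{R_{ab}}(\tfrac{p^{ab}-1}{bc})$).} By Theorem \ref{Local case} and \eqref{GPkq},
\[
G_{R_{ab}}(\tfrac{p^{ab}-1}{bc})\simeq \G(\tfrac{p^{ab}-1}{bc},p^{ab})\otimes \mathring{K}_{m_{ab}},
\qquad
G_{R_a}(\tfrac{p^a-1}c)\simeq \G_0\otimes \mathring{K}_{m_a},
\]
where $\G_0=\G(\tfrac{p^a-1}c,p^a)$. Since $b>1$, $b\mid ab$ and $c\dagger p^a-1$, the Pearce--Praeger Cartesian decomposition (with its directed extension from \cite{PV7}) yields $\G(\tfrac{p^{ab}-1}{bc},p^{ab})\simeq \square^b\G_0$. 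Hence $G_{R_{ab}}(\tfrac{p^{ab}-1}{bc})\simeq (\square^b\G_0)\otimes \mathring{K}_{m_{ab}}$.

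\textbf{Step 3 (part (a): undirected or mixed case).} Under the hypothesis $p=2$ or $bc$ even, Corollary \ref{-1RFq} gives that $G_{R_{ab}}(\tfrac{p^{ab}-1}{bc})$ is undirected, so also $\square^b\G_0$ is undirected and one uses the standard identity $\delta(\square^b\G_0)=b\,\delta(\G_0)$ for undirected connected graphs together with Proposition \ref{lem1}(a) to obtain $g_{R_{ab}}(\tfrac{p^{ab}-1}{bc})=b\,\delta(\G_0)=b\,g(\tfrac{p^a-1}c,p^a)$. Matching this against $g_{R_a}(\tfrac{p^a-1}c)$ via Theorem \ref{teo waring Local case}(a) is immediate whenever $\G_0$ is also undirected (i.e.\ $p=2$ or $c$ even). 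The delicate subcase is $p$ odd, $b$ even, $c$ odd: here $\G_0$ is directed but $\square^b\G_0$ is undirected, and the $+1$ correction from Theorem \ref{teo waring Local case}(b) for $g_{R_a}$ must vanish. This is handled by invoking Lemma \ref{girth prod} to compare $\gamma(\G_0)$ with $g(\tfrac{p^a-1}c,p^a)$, together with Lemma \ref{lem 3gRWp} which bounds the girth by $p$; the conclusion is that the $m_a\ge 2$ girth alternative in Theorem \ref{teo waring Local case}(b) cannot be triggered, so one still has $g_{R_a}(\tfrac{p^a-1}c)=g(\tfrac{p^a-1}c,p^a)$, which yields \eqref{red fla}.

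\textbf{Step 4 (part (b): fully directed case).} When $p$ is odd and $bc$ is odd, both $\G(\tfrac{p^{ab}-1}{bc},p^{ab})$ and $\G_0$ are directed, and so are both Cayley graphs. Apply Theorem \ref{teo waring Local case}(b) to each to express the Waring numbers as the field Waring number, possibly with a $+1$. For the local ring $R_a$, the alternative is governed by comparing $\gamma(\G_0)$ with $g(\tfrac{p^a-1}c,p^a)$ and by whether $m_a=1$; for $R_{ab}$, the analogous comparison involves $\gamma(\square^b\G_0)$ versus $b\,g(\tfrac{p^a-1}c,p^a)$. Lemma \ref{girth prod} gives $\gamma(\square^b\G_0)\le\gamma(\G_0)$, so the big-graph girth is no larger than the small-graph girth while the big-graph diameter is $b$ times the small-graph diameter; this lets the two subcases of \eqref{red fla dir} line up with the two subcases of Theorem \ref{teo waring Local case}(b) for $R_a$. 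The main obstacle will be this very point: tracking how the $+1$ correction propagates through the Cartesian product and through the Kronecker product by $\mathring{K}_m$, and checking that the girth/diameter bookkeeping for $\square^b\G_0$ matches the prescribed values $b\,g_{R_a}(\tfrac{p^a-1}c)$ and $b(g_{R_a}(\tfrac{p^a-1}c)-1)$ in the two regimes.
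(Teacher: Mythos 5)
Your plan follows the paper's route almost exactly: existence and the reduction to field Waring numbers via Theorem \ref{teo waring Local case}, the Pearce--Praeger decomposition $\G(\tfrac{p^{ab}-1}{bc},p^{ab})\simeq\square^b\G_0$, the field-level formula $g(\tfrac{p^{ab}-1}{bc},p^{ab})=b\,g(\tfrac{p^a-1}c,p^a)$ from \cite{PV7}, and then girth/diameter bookkeeping to decide when the $+1$ of Theorem \ref{teo waring Local case}($b$) appears. Two points need attention. First, in Step 3 your treatment of the mixed subcase ($p$ odd, $b$ even, $c$ odd) does not work as stated: Lemma \ref{lem 3gRWp} only gives $\gamma(\G_0)\le p$, and Lemma \ref{girth prod} compares $\gamma(\square^b\G_0)$ with $\gamma(\G_0)$; neither yields $\gamma(\G_0)\le g(\tfrac{p^a-1}c,p^a)$, so you cannot conclude that the girth alternative for $R_a$ ``cannot be triggered.'' (For what it is worth, the paper's own proof of ($a$) simply asserts that all four graphs are undirected when $bc$ is even, i.e.\ it tacitly treats $c$ as even; you have correctly spotted a subtlety the paper glosses over, but your proposed repair is not a proof. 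One would need to show either that this subcase is vacuous under the primitivity hypotheses or that the girth inequality genuinely holds there.)

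Second, in Step 4 you flag the propagation of the $+1$ as ``the main obstacle'' but do not resolve it; the paper does, and the missing ingredient is Lemma \ref{g=d+1}. Concretely: in the subcase $\gamma(\G_0)\le g(\tfrac{p^a-1}c,p^a)$ one gets $\gamma(\square^b\G_0)\le\gamma(\G_0)\le g(\tfrac{p^a-1}c,p^a)<b\,g(\tfrac{p^a-1}c,p^a)=g(\tfrac{p^{ab}-1}{bc},p^{ab})$, so the big ring takes no $+1$ and both sides equal $b\,g_{R_a}$. In the subcase $\gamma(\G_0)>g(\tfrac{p^a-1}c,p^a)$ with $m_a\ge2$, Lemma \ref{g=d+1} forces $\gamma(\G_0)=g(\tfrac{p^a-1}c,p^a)+1$, whence $\gamma(\square^b\G_0)\le g(\tfrac{p^a-1}c,p^a)+1\le b\,g(\tfrac{p^a-1}c,p^a)$ because $b\ge2$; so again $g_{R_{ab}}=b\,g(\tfrac{p^a-1}c,p^a)$, while $g_{R_a}=g(\tfrac{p^a-1}c,p^a)+1$, giving $b(g_{R_a}-1)$. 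Without Lemma \ref{g=d+1} you only know $\gamma(\G_0)\le p$, which is not enough to rule out the $+1$ for $R_{ab}$. Adding these two computations would complete your plan along the paper's lines.
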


\begin{proof}
Put $k_{ab}=\tfrac{p^{ab}-1}{bc}$ and $k_a=\tfrac{p^a-1}c$, for simplicity, and notice that $(k_a,p)=1$ and $(k_{ab},p)=1$.	
Since $k_{a}'=(k_{a},p^{ab}-1)=k_{a}$ and $k_{ab}'=(k_{ab},p^{ab}-1)=k_{ab}$, the hypotheses $c \dagger p^a-1$ and $bc \dagger p^{ab}-1$ imply that 
	$$\tfrac{p^a-1}{k_a'} \dagger p^a-1 \qquad \text{and} \qquad \tfrac{p^{ab}-1}{k_{ab}'} \dagger p^{ab}-1,$$ 
so we are in the conditions of Theorem \ref{teo waring Local case} and in particular $g_{R_{ab}}(\tfrac{p^{ab}-1}{bc})$ and $g_{R_a}(\tfrac{p^{a}-1}{c})$ exist. 
Moreover, we have the Cartesian decomposition 
	$$\G(\tfrac{p^{ab}-1}{bc}, p^{ab}) \simeq \square^b \G(\tfrac{p^{a}-1}{c},p^a).$$
We now distinguish between the directed and the undirected case for these graphs.

\sk 

\noindent ($a$) 
Since either $p$ is even or else $p$ is odd and $bc$ is even, all the graphs $G_{R_a}(k_a)$, $G_{R_{ab}}(k_{ab})$, $\G(k_a,p^a)$ and $\G(k_{ab},p^{ab})$ are undirected. By ($a$) in Theorem \ref{teo waring Local case} and Proposition 2.1 in \cite{PV7} we have that
	\begin{equation} \label{fla red for R}
		g_{R_{ab}}(\tfrac{p^{ab}-1}{bc}) = g(\tfrac{p^{ab}-1}{bc}, p^{ab}) = b\, g(\tfrac{p^{ab}-1}{bc}, p^{ab}) = b\, g_{R_{a}}(\tfrac{p^{a}-1}{c}),
	\end{equation}
and hence \eqref{red fla} holds. 

\noindent ($b$) 
Assume now that both $p$ and $b$ are odd. Thus, all the graphs $G_{R_a}(k_a)$, $G_{R_{ab}}(k_{ab})$, $\G(\tfrac{p^a-1}c,p^a)$ and $\G(\tfrac{p^{ab}-1}{bc},p^{ab})$ are directed, and hence we are in the situation of ($b$) in Theorem \ref{teo waring Local case}.

If $m_a=1$, we are in the finite field case, and hence \eqref{fla red for R} holds by Theorem 2.4 in \cite{PV7}.
So we can assume that $m_a\ge 2$.

If $\gamma(\G(\tfrac{p^a-1}c,p^a))\le g(\tfrac{p^a-1}c,p^a)$ then, since $b>1$, we have that 
$$\gamma(\G(\tfrac{p^{ab}-1}{bc},p^{ab}))\le g(\tfrac{p^a-1}c,p^a)< b g(\tfrac{p^a-1}c,p^a)=g(\tfrac{p^{ab}-1}{bc},p^{ab}).$$
Hence, by ($b$) in Theorem \ref{teo waring Local case} and Theorem 2.4 from \cite{PV7}, 
we obtain that
$$g_{R_{ab}}(\tfrac{p^{ab}-1}{bc})=g(\tfrac{p^{ab}-1}{bc},p^{ab}) =bg(\tfrac{p^{a}-1}{c},p^a)=bg_{R_{a}}(\tfrac{p^{a}-1}{c}),$$
as asserted.

Now, if $\gamma(\G(\tfrac{p^a-1}c,p^a))> g(\tfrac{p^a-1}c,p^a)$, then we have that 
	\begin{equation}\label{eq gRa g+1}
		g_{R_{a}}(\tfrac{p^{a}-1}{c}) = g(\tfrac{p^{a}-1}{c},p^a)+1,
	\end{equation}
by \eqref{gR local dir}.
On the other hand, Lemmas \ref{g=d+1} and \ref{girth prod} imply that 
	$$\gamma(\G(\tfrac{p^{ab}-1}{bc},p^{ab})) \le  \gamma(\G(\tfrac{p^a-1}c,p^a))=g(\tfrac{p^{a}-1}{c},p^a)+1,$$
and thus, since $b>1$, we have 
	$$g(\tfrac{p^{a}-1}{c},p^a)+1\le bg(\tfrac{p^{a}-1}{c},p^a)=g(\tfrac{p^{ab}-1}{bc},p^{ab})$$ 
so that we obtain 
	$$\gamma(\G(\tfrac{p^{ab}-1}{bc},p^{ab}))\le g(\tfrac{p^{ab}-1}{bc},p^{ab}).$$
Finally, ($b$) in Theorem \ref{teo waring Local case} implies that $g_{R_{ab}}(\tfrac{p^{ab}-1}{bc})=g(\tfrac{p^{ab}-1}{bc},p^{ab})$. 
Therefore, by \eqref{eq gRa g+1} we get
	$$g_{R_{ab}}(\tfrac{p^{ab}-1}{bc})= bg(\tfrac{p^{a}-1}{c},p^a)= b(g_{R_{a}}(\tfrac{p^{a}-1}{c})-1)$$
in this case, and the result is proved.
\end{proof}

\section{Explicit formulas on $G_R(k)$ for $R$ local} \label{sec5}
In this and the next section we apply the results from the two previous ones to some particular cases, and we get explicit computations for Waring numbers over local rings. Here we adapt the already known results in the literature for finite fields to finite commutative local rings.

Since the numbers $g(k,q)$ corresponds to diameters of GP-graphs $\G(k,q)$, using Theorem \ref{teo waring Local case}, we can adapt any of the known results for Waring numbers over the finite field $\ff_q$ for Waring numbers over a local ring $(R,\frak m)$ having residue field $R/\frak m \simeq \ff_q$. 
For instance, this is the case for the Kononen's result (\cite{Kon}) and all the explicit values $g(k,q)$ obtained in \cite{PV6}.

\subsection{Kononen's result for local rings}
We first extend Kononen's result on Waring numbers over finite fields to any finite commutative local ring with identity. Recall that an integer $b$ is a \textit{primitive root modulo} $n$ if for every integer $a$ coprime to $n$, there is some integer $k$ such that  $b^k \equiv a \pmod n$. That is, $b$ is a primitive root modulo $n$ if and only if $\Z_n^*$, the multiplicative group of integers modulo $n$, is cyclic and $b$ is a generator of $\Z_n^*$.

\begin{prop} \label{kononen} 
Let $p$ and $r$ be primes such that $p$ is a primitive root modulo $r^t$ for some $t \in \N$. 
Let $R$ be any finite commutative local ring with identity whose residue field $\ff_q$ has size $q=p^{\varphi(r^t)}$, where $\varphi$ denotes the Euler's totient function. Then, we have  
\begin{equation} \label{kon fla}
	g_R \big( \tfrac{p^{\varphi(r^t)}-1}{r^t} \big) = \tfrac 12 (p-1) \varphi(r^t).
\end{equation} 
 If in addition $p$ and $r$ are odd, then
\begin{equation} \label{kon fla2}
	g_R \big( \tfrac{p^{\varphi(r^t)}-1}{2r^t} \big) =
 		\begin{cases}
 			r^{t-1}\lfloor \tfrac{pr}4 -\tfrac{p}{4r}\rfloor, & \qquad \text{ if }	r<p,		\\[1.5mm]
 			r^{t-1}\lfloor \tfrac{pr}4 -\tfrac{r}{4p}\rfloor, & \qquad \text{ if } r\ge p. 
 		\end{cases}
 \end{equation} 
\end{prop}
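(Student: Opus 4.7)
The plan is to apply Theorem \ref{teo waring Local case} to reduce the question of computing $g_R(k)$ to the known computation over the residue field $\ff_q$, and then quote Kononen's theorem \cite{KK} on Waring numbers over finite fields. Set $q = p^{\varphi(r^t)}$ throughout.

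First I would verify the hypotheses of Theorem \ref{teo waring Local case} for both exponents $k \in \{(q-1)/r^t,\, (q-1)/(2r^t)\}$. In each case $k\mid q-1$, so $(k,p)=1$ and $k' = (k,q-1) = k$. The primitive root hypothesis on $p$ modulo $r^t$ is equivalent to $r^t \dagger q-1$; since $r$ is odd in the second formula, $\gcd(2,r^t)=1$ further yields $2r^t \dagger q-1$. Thus the quotients $(q-1)/k'$ equal $r^t$ and $2r^t$ respectively and are primitive divisors of $q-1$, so $g_R(k)$ exists in both cases.

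Next I would determine the directed/undirected split via Corollary \ref{-1RFq}. For $k=(q-1)/r^t$, the condition $k\mid (q-1)/2$ is equivalent to $2\mid r^t$, so $G_R(k)$ is undirected iff $p=2$ or $r=2$; assuming $k>1$ (the degenerate cases being absorbed by the first two clauses of \eqref{gR local}), Theorem \ref{teo waring Local case}(a) combined with Kononen's formula yields $g_R(k)=g(k,q)=\tfrac12(p-1)\varphi(r^t)$. For $k=(q-1)/(2r^t)$ one has $(q-1)/2 = r^t k$, whence $k\mid (q-1)/2$ and the graph is always undirected, so the same argument together with Kononen's second formula yields \eqref{kon fla2}.

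The remaining directed case is $k=(q-1)/r^t$ with $p$ and $r$ both odd. Theorem \ref{teo waring Local case}(b) gives $g_R(k)\in\{g(k,q),\, g(k,q)+1\}$, so the main task is to establish $\gamma(\G(k,q))\le g(k,q)$ in order to exclude the second possibility. For this I would observe that $U_k$ is the unique subgroup of $\ff_q^*$ of order $r^t$ and thus contains the $r$ distinct $r$-th roots of unity $1,\zeta,\ldots,\zeta^{r-1}$; since these sum to zero, their partial sums form a directed $r$-cycle through $0$ in $\G(k,q)$, giving $\gamma(\G(k,q))\le r$. Combined with the characteristic bound $\gamma(\G(k,q))\le p$ from Lemma \ref{lem 3gRWp}, we obtain $\gamma(\G(k,q))\le \min(p,r)$. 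On the other hand Kononen's formula gives $g(k,q)=\tfrac12(p-1)\varphi(r^t) \ge \tfrac12(p-1)(r-1)$, and a short check using that $p\neq r$ are distinct odd primes (so $|p-r|\ge 2$) shows $\min(p,r)\le \tfrac12(p-1)(r-1)$. Thus the first branch of \eqref{gR local dir} applies and $g_R(k)=g(k,q)=\tfrac12(p-1)\varphi(r^t)$, completing \eqref{kon fla}. The main obstacle is the girth bound and its comparison against $g(k,q)$ in the directed case; everything else reduces to verifying the hypotheses of Theorem \ref{teo waring Local case} and invoking Kononen's theorem.
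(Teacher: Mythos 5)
Your proposal is correct and follows the same overall strategy as the paper: verify the hypotheses of Theorem \ref{teo waring Local case} (coprimality, $k'=k$, primitive divisibility from the primitive-root hypothesis), split into undirected and directed cases, and import Kononen's field formulas. The only genuine divergence is in the key step of the directed case ($p$, $r$ both odd), namely establishing $\gamma(\G(k,q))\le g(k,q)$ so that the first branch of \eqref{gR local dir} applies. The paper uses only the characteristic bound $\gamma(\G(k,q))\le p$ from Lemma \ref{lem 3gRWp} and checks that $g(k,q)=\tfrac12(p-1)\varphi(r^t)\ge p$ via the inequality $r^{t-1}(r-1)\ge \tfrac{2p}{p-1}$, which fails exactly for $(t,r)=(1,3)$; that exceptional case is then handled separately by producing a directed $3$-cycle from the cube roots of unity ($1+\beta+\beta^2=0$). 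You instead observe that the $r$-th roots of unity always lie in $U_k$ (the subgroup of order $r^t$) and that their vanishing sum yields a directed $r$-cycle through $0$, giving $\gamma(\G(k,q))\le\min(p,r)$ uniformly; the elementary inequality $\min(p,r)\le\tfrac12(p-1)(r-1)$ for distinct odd primes then closes the argument with no exceptional case. Your girth bound is exactly the paper's ad hoc trick for $r=3$ promoted to a general lemma, and it buys a cleaner, case-free proof of \eqref{kon fla}; the paper's route avoids introducing the roots-of-unity cycle in general but pays for it with the $(t,r)=(1,3)$ digression. (Both you and the paper silently assume $k>1$; in the degenerate instances $(p,r,t)=(2,3,1)$ and $(3,2,1)$ one has $k=1$, where the third clause of \eqref{gR local} does not apply, but this corner case is inherited from the paper's own statement.)
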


\begin{proof}
Under the hypotheses in the statement, in 2010 the authors of \cite{KK} and \cite{Win2} obtained the expressions 
\begin{equation} \label{kon fla3} 
	g \big( \tfrac{p^{\varphi(r^t)}-1}{r^t},p^{\varphi(r^t)} \big) = \tfrac 12 (p-1) \varphi(r^t),
\end{equation} 
and, if in addition $p$ and $r$ are both odd integers, the more explicit ones
\begin{equation} \label{kon fla4} 
g \big( \tfrac{p^{\varphi(r^t)}-1}{2r^t},p^{\varphi(r^t)} \big) =
\begin{cases}
	r^{t-1}\lfloor \tfrac{pr}4 -\tfrac{p}{4r}\rfloor, & \qquad \text{ if }	r<p,		\\[1.5mm]
	r^{t-1}\lfloor \tfrac{pr}4 -\tfrac{r}{4p}\rfloor, & \qquad \text{ if } r\ge p. 
\end{cases}
\end{equation} 
Winterhof and van de Woestijne (\cite{Win2}) proved \eqref{kon fla3} and \eqref{kon fla4} in the case $t=1$ and then Kononen (\cite{KK}) proved the general case using the known result for $t=1$. 

The number $k=\tfrac{p^{\varphi(r^t)}-1}{r^t}$ is coprime with $p$ and $k'=(k,q-1)=k>1$. 
Since $p$ is a primitive root modulo $r^t$, we clearly have that $\frac{q-1}{k'} \dagger q-1$ and, hence,  
we are under the hypotheses of Theorem~\ref{teo waring Local case}. 

Now, $G_R(k)$ is undirected if $p=2$ or if $p$ is odd and $k \mid \frac{q-1}2$, which happens if $r$ is even.
Thus, if $p=2$ or $p$ is odd and $r$ is even, by $(a)$ in Theorem \ref{teo waring Local case} we have that $g_R(k)=
g(k,q)$ (since $k'=k>1$) and \eqref{kon fla} holds in this case. 

On the other hand, if $p$ and $r$ are odd, the graph $G_R(k)$ is directed. 
In this case, since the girth of the graph $\G(k,q)$ is less than or equal to the characteristic of the residue field, we have $\gamma(\G(k,q)) < p$. 
By \eqref{kon fla3} we have that $g(k,q) \ge p$ if and only if 
		$$ r^{t-1}(r-1) \ge \tfrac{2p}{p-1}.$$
This inequality holds for every odd $r$ and every $t$, except for the case $t=1$ and $r=3$.

Now, if $t=1$ and $r=3$, then $k=\frac{p^2-1}3$ and hence $U_{R/\frak m, k} = \{1,\beta,\beta^{2}\}$ where $\frak m$ is the maximal ideal of $R$ and $\beta=\alpha^k$ with $\alpha$ a primitive element of $\ff_{p^2}$ (since $q=p^{\varphi(3)}$). Notice that 
		$$\beta^3-1 = (1+\beta+\beta^{2})(\beta-1)=0,$$
and thus, since $\beta\neq 1$, we have that $1+\beta+\beta^{2}=0$. This implies that there exists a directed $3$-cycle, which begins and ends in $0$. This implies that 
$\gamma(\G(\frac{p^2-1}{3},p^2))\le 3$. Since $p$ is odd and is a primitive root modulo $3$,
we have that $p\equiv 2\pmod{3}$ and $p\ge 5$, and hence by \eqref{kon fla3} we get 
		$$g(\tfrac{p^2-1}{3},p^2)=p-1\ge 4.$$ 
Therefore $\gamma(\G(\frac{p^2-1}{3},p^2))\le g(\frac{p^2-1}{3},p^2)$ in this case, as well. 

In this way, since $\gamma(\G(k,q)) \le g(k,q)$, by ($b$) in Theorem \ref{teo waring Local case} we get that $g_R(k)=g(k,q)$, and hence \eqref{kon fla} is proved. 

For the remaining expression, we proceed similarly as before with for $\tilde k=\tfrac 12 k$. Since $p$ and $r$ are both odd, the graph $G_R(\tilde k)$ is undirected and hence \eqref{kon fla4} and ($a$) of Theorem \ref{teo waring Local case} together imply \eqref{kon fla2}, as we wanted to show.
\end{proof}

\subsection{Explicit results from the reduction formula}
We now adapt the two main results of \cite{PV6} for Waring numbers over finite fields, namely Theorems 4.1 and 6.1, to Waring numbers over finite commutative local rings, from which several particular cases can be deduced.

If $a$ and $b$ are integers, we denote by $ord_b(a)$ the order of $a$ modulo $b$; that is, the least integer $t$ such that $a^t \equiv 1 \pmod b$. 

\begin{thm} \label{prop gral fla b}
Let $p$ be a prime and let $a,b \in \N$ such that $b \mid \frac{p^{ab}-1}{p^a-1}$. 
For any finite commutative local ring with identity $R$ whose residue field $\ff_q$ has size $q=p^{ab}$ we have that 
\begin{equation} \label{gral fla b}
	g_R(\tfrac{p^{ab}-1}{b(p^a-1)}) = g_R(\tfrac 1b(p^{a(b-1)}+\cdots+p^{2a}+p^a+1))= b.
\end{equation}

Furthermore, 
\eqref{gral fla b} holds in the following cases:
  \begin{enumerate}[$(a)$]
	\item If $b=r$ is a prime different from $p$ and $p^a \equiv 1 \pmod r$. \sk 
	
	\item If $b=2r$ with $r$ an odd prime, $p^a$ coprime with $b$ and $p^a \equiv \pm1 \pmod r$. \sk 
	
	\item If $b=r r'$ with $r<r'$ odd primes such that $r \nmid r'-1$ and $p^a \equiv 1 \pmod{rr'}$. \sk 
	
	\item If $b=r_1 r_2 \cdots r_\ell$ with $r_1 < r_2 < \cdots < r_\ell$ primes different from $p$ with $p^a \equiv 1 \pmod{r_1}$ and $(p^a)^{b/r_i} \equiv 1 \pmod{r_i}$ for $i=2, \ldots,\ell$. \sk 
	
	\item If $b=r^t$ with $r$ prime such that $ord_{b}(p^a)=r^h$ for some $0\le h<t$. \sk 
	
	\item If $b = r_1^{t_1}	\cdots r_\ell^{t_\ell}$ with $r_1 < \cdots < r_\ell$ primes different from $p$ where $ord_{r_{i}^{t_i}}(p^a) = r_{i}^{h_i}$ with  $0\le h_i\le t_{i}-1$ for all $i$.
  \end{enumerate}
Conversely, if \eqref{gral fla b} holds with $b$ as in one of the items $(a)$--$(e)$ then the condition for $p^a$ stated in the 
corresponding item holds.
\end{thm}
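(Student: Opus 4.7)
The plan is to reduce the theorem to the already-known finite-field case by invoking Theorem~\ref{teo waring Local case}, then to quote the corresponding field-level statements from \cite{PV6,PV7}, and finally to read off each of the cases $(a)$--$(f)$ as an arithmetic criterion for the underlying divisibility. To begin, set $k = \tfrac{p^{ab}-1}{b(p^a-1)}$ and $q=p^{ab}$; the geometric-series identity $\tfrac{p^{ab}-1}{p^a-1} = p^{a(b-1)}+\cdots+p^a+1$ gives the second equality in \eqref{gral fla b} at no cost, so the content is the equality $g_R(k)=b$.

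To obtain this equality, I would check the hypotheses of Theorem~\ref{teo waring Local case}. The coprimality $(k,p)=1$ is immediate since $k \mid p^{ab}-1$; the equality $k':=(k,q-1)=k$ is automatic; and the primitive-divisor condition $\tfrac{q-1}{k'} = b(p^a-1) \dagger p^{ab}-1$ is exactly Lemma~3.3 of \cite{PV7}. Next, I would verify via Corollary~\ref{-1RFq} that $G_R(k)$ is undirected: if $p=2$ this is automatic, and if $p$ is odd then $p^a-1$ is even so $k=(k,q-1)$ divides $\tfrac{q-1}{2}$. Observing that $k>1$ whenever $b>1$ (since $\tfrac{p^{ab}-1}{p^a-1}=\sum_{i=0}^{b-1}(p^a)^i>b$ in that range), the third case of part $(a)$ of Theorem~\ref{teo waring Local case} yields $g_R(k)=g(k,q)$, and then the corresponding finite-field formula (Theorem~4.1 of \cite{PV6}) gives $g(k,q)=b$, proving \eqref{gral fla b}.

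For the list $(a)$--$(f)$ the task is number-theoretic: supply explicit conditions on $p,a,b$ that guarantee $b \mid \tfrac{p^{ab}-1}{p^a-1}$. This is essentially Theorem~6.1 of \cite{PV6} transported to local rings through the reduction above. Concretely, $(a)$ and $(b)$ follow by reducing $\sum_{i=0}^{b-1}(p^a)^i$ modulo $r$ and, in $(b)$, tracking the extra factor of $2$; $(c)$ and $(d)$ combine the prime cases by the Chinese Remainder Theorem together with the constraint that distinct primes appear in $b$; $(e)$ and $(f)$ are the prime-power analogues, where lifting-the-exponent is used to compare $v_{r}(b)$ with $v_{r}\bigl(\tfrac{p^{ab}-1}{p^a-1}\bigr)$ under the hypothesis $\operatorname{ord}_{r^t}(p^a)=r^h$ with $0 \le h<t$. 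The converse direction for $(a)$--$(e)$ is then the reversal: the existence of $g_R(k)$ forces $b \mid \tfrac{p^{ab}-1}{p^a-1}$, and the same valuation/order computations extract the stated congruence on $p^a$.

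The main obstacle will be the uniform treatment of $(e)$ and $(f)$. The hypothesis $\operatorname{ord}_{r_i^{t_i}}(p^a) = r_i^{h_i}$ with $0\le h_i \le t_i-1$ records that $p^a$ lies in the cyclic $r_i$-Sylow subgroup of $(\Z/r_i^{t_i}\Z)^*$ and that its order is strictly smaller than $r_i^{t_i-1}(r_i-1)$; extracting the \emph{exact} $r_i$-adic valuation of $\tfrac{p^{ab}-1}{p^a-1}$ from this requires careful LTE bookkeeping, particularly in the critical subcase $h_i=t_i-1$. Once this case analysis is dispatched cleanly, all assertions in $(a)$--$(f)$ fall out of the reduction to $g(k,q)=b$ established in the first stage of the argument.
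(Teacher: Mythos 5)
Your proposal is correct and follows essentially the same route as the paper: reduce to the residue field via part $(a)$ of Theorem \ref{teo waring Local case} (with $k'=k>1$), quote Theorem 4.1 of \cite{PV6} for $g(k,q)=b$, and obtain items $(a)$--$(f)$ and the converse from Theorem 6.1 of \cite{PV6}. The only difference is that you spell out the hypothesis checks (primitive divisibility via Lemma 3.3 of \cite{PV7} and undirectedness from $2\mid p^a-1$) that the paper leaves implicit, which is a welcome addition rather than a divergence.
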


\begin{proof}
In Theorem 4.1 in \cite{PV6} we showed that if $p$ is a prime and $a,b$ are positive integers 
such that $b \mid \frac{p^{ab}-1}{p^a-1}$, then $g(\tfrac{p^{ab}-1}{b(p^a-1)}, p^{ab}) = b$. If we take $k=\tfrac{p^{ab}-1}{b(p^a-1)}$, by using ($a$) in Theorem \ref{teo waring Local case} with $k'>1$ we get \eqref{gral fla b}. 
The remaining assertions follow directly from \eqref{gral fla b} and Theorem 6.1 in \cite{PV6}.
\end{proof}

\begin{rem}
($i$) By applying the same argument used in the above two proofs, all the results of the form 
$g(k,q)=b$ obtained in Examples 4.4 and 4.5, Corollaries 6.2--6.5, 6.8--6.10, 6.13, Proposition~6.11 and Examples~6.6, 6.7 and 6.14 in \cite{PV6} translate \textsl{mutatis mutandis} to $g_R(k)=b$ for any finite commutative local ring $(R,\frak m)$ whose residue field $R/\frak m$ has size the $q$. This gives a lot of explicit exact results for Waring numbers over finite rings.

\noindent ($ii$)
For instance, let $p$ be a prime and $a,b \in \N$ such that $p\nmid b$ and assume that $R$ is any finite commutative local ring with identity whose residue field is of size $q=p^{ab}$. 
Then, by Proposition~6.11 in \cite{PV6}, \eqref{gral fla b} holds if $\varphi(rad(b)) \mid a$,
where $rad(b)=p_1 \cdots p_r$ is the radical of an integer $b$ whose decomposition is $b=p_1^{a_1} \cdots p_r^{a_r}$.
\end{rem}

Applying the argument in the previous remark, by Corollaries 6.2--6.5  in \cite{PV6} we have the following general examples.

\begin{exam} \label{coros 6.3-6.5}
Let $p$ be an odd prime and $a\in \N$. If $R_{a}$ denotes any finite commutative local ring with identity whose residue field is of size $q=p^a$, then we have 	
	\begin{equation} \label{g2}
		\begin{aligned} 
			& g_{R_{2a}}(\tfrac{p^a+1}2) = 2,
		\end{aligned}	
	\end{equation}
	\begin{equation} \label{g=3}
		\begin{aligned}
			& g_{R_{3a}}(\tfrac{p^{2a}+p^a+1}3)=3, 		  & & \qquad \text{if \: $p \equiv 1 \!\! \pmod 3$}, \\[1mm]
			& g_{R_{6a}}(\tfrac{p^{4a}+p^{2a}+1}3)=3,      & & \qquad \text{if \: $p \equiv 2\!\! \pmod 3$}, \\[1mm]
		\end{aligned}
	\end{equation}
	\begin{equation} \label{g=5}
		\begin{aligned}
			& g_{R_{5a}}(\tfrac{p^{4a}+p^{3a}+p^{2a}+p^a+1}{5})=5, 			& & \qquad \text{if \: $p \equiv  1 \!\!\pmod 5$}, \\[1mm]
			& g_{R_{10a}}(\tfrac{p^{8a}+p^{6a}+p^{4a}+p^{2a}+1}{5})=5, 		& & \qquad \text{if \: $p \equiv  4 \!\!\pmod 5$}, \\[1mm]
			& g_{R_{20a}}(\tfrac{p^{16a}+p^{12a}+p^{8a}+p^{4a}+1}{5})=5, 	& & \qquad \text{if \: $p \equiv 2,3 \!\!\pmod  5$}, \\[1mm]
		\end{aligned}
	\end{equation}
	\begin{equation} \label{g=7}
		\begin{aligned}
			& g_{R_{7a}}(\tfrac{p^{6a}+p^{5a} + p^{4a} + p^{3a}+p^{2a}+p^a+1}{7})=7, & & \qquad \text{if \: $p \equiv  1 \!\!\pmod 7$}, \\[1mm]
			& g_{R_{14a}}(\tfrac{p^{12a}+p^{10a}+p^{8a} + p^{6a}+p^{4a}+p^{2a}+1}{7})=7, & & \qquad \text{if \: $p \equiv  6 \!\!\pmod 7$}, \\[1mm]
			& g_{R_{21a}}(\tfrac{p^{18a}+p^{15a}+p^{12a} + p^{9a}+p^{6a}+p^{3a}+1}{7})=7, & & \qquad \text{if \: $p \equiv  2 \!\!\pmod 7$}, \\[1mm]
			& g_{R_{42a}}(\tfrac{p^{36a}+p^{30a}+p^{24a} + p^{18}+p^{12a}+p^{6a}+1}{7})=7, & & \qquad \text{if \: $p \equiv  3,4,5 \!\!\pmod 7$}.
		\end{aligned}
	\end{equation}
\end{exam}


\section{Explicit values for $g_R(k)$ with $R$ local} \label{sec6}
In this last section we apply the results from the two previous ones, especially Theorem~\ref{teo waring Local case}, to get explicit Waring numbers over finite commutative local rings with identity. First, we adapt some known results for Waring numbers over finite fields to Waring numbers over finite commutative local rings $R$ to get 
$$2 \le g_R(k) \le 3,$$ 
and then consider the case when $(R,\frak m)$ is a local ring with prime residue field.

\subsection{Small values of $g_R(k)$ for $R$ local} 	
We first show that, under standard arithmetic conditions, if $g(k,q)=2$ then $g_R(k)=2$ or $3$ for any finite commutative local ring $R$ with residue field $\ff_q$.
\begin{thm} \label{gR23}
Let $(R,\frak m)$ be a finite commutative local ring with identity with residue field 
$\ff_{q}$ of prime characteristic $p$. 
Let $k \in \N$ with $(k,p)=1$ and put $k'=(k,q-1)$ such that $\frac{q-1}{k'} \dagger q-1$. 
If $g(k,q)=2$, then $g_R(k)=2$ if $R$ is a field and for $R$ not a field we have 
\begin{equation} \label{gRk23}
	g_{R}(k) =
		\begin{cases}
			2, & \qquad \text{if $q$ is even or else $q$ is odd and $k' \mid \tfrac{q-1}{2}$}, \\[1mm]
			3, & \qquad \text{if $q$ is odd and $k' \nmid \tfrac{q-1}{2}$.}
		\end{cases}
\end{equation}
\end{thm}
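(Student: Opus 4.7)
The plan is to apply Theorem \ref{teo waring Local case} directly, since its hypotheses $(k,p)=1$ and $\frac{q-1}{k'} \dagger q-1$ are assumed. First I would record a preliminary observation: the assumption $g(k,q)=2$ forces $k'>1$. Indeed, if $k'=1$ then $U_k = \mathbb{F}_q^*$ and $\G(k,q)=K_q$, which has diameter $1$ and thus $g(k,q)=1$, contradicting the hypothesis. This is important because the small exceptional cases in \eqref{gR local} happen precisely when $k'=1$, and ruling them out lets us stay in the ``main'' third branch.

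When $R$ is a field, $|\mathfrak m|=1$ and either branch of Theorem \ref{teo waring Local case} yields $g_R(k)=g(k,q)=2$ (in the undirected branch one uses $k'>1$; in the directed branch one uses $|\mathfrak m|=1$). Suppose now that $R$ is not a field, so $|\mathfrak m|\ge 2$. By the characterization \eqref{nodirigido}, $G_R(k)$ is undirected exactly when $q$ is even or when $q$ is odd with $k'\mid \tfrac{q-1}{2}$. In either of these undirected cases, since $k'>1$, the third branch of Theorem \ref{teo waring Local case}(a) gives $g_R(k)=g(k,q)=2$ as desired.

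The remaining case is $q$ odd with $k'\nmid \tfrac{q-1}{2}$, so by \eqref{nodirigido} the graphs $G_R(k)$ and $\G(k,q)$ are directed. Theorem \ref{teo waring Local case}(b) then gives $g_R(k)=g(k,q)=2$ or $g_R(k)=g(k,q)+1=3$, according to whether $\gamma(\G(k,q))\le 2$ or $\gamma(\G(k,q))>2$. The main (small) obstacle is to rule out $\gamma(\G(k,q))=2$.

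For this, I would argue as follows. A directed $2$-cycle in the Cayley graph $\G(k,q)=Cay(\ff_q,U_k)$ would require elements $u,-u\in U_k$, i.e.\ some $u\in U_k$ with $-u\in U_k$; since $U_k$ is a subgroup of $\ff_q^*$ containing $1$, multiplying $-u$ by $u^{-1}\in U_k$ would give $-1\in U_k$, contradicting the fact that $G_R(k)$ is directed. Hence $\gamma(\G(k,q))\ge 3 > 2 = g(k,q)$, and the second branch of Theorem \ref{teo waring Local case}(b), applicable since $|\mathfrak m|\ge 2$, delivers $g_R(k)=g(k,q)+1=3$. This closes the last case and proves \eqref{gRk23}.
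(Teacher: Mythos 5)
Your proposal is correct and follows essentially the same route as the paper: apply Theorem \ref{teo waring Local case} via the dichotomy \eqref{nodirigido}, and in the directed case conclude $g_R(k)=g(k,q)+1=3$ because $\gamma(\G(k,q))\ge 3>2=g(k,q)$. The only cosmetic difference is that you justify $\gamma(\G(k,q))\ge 3$ directly from the subgroup structure of $U_k$ (a directed $2$-cycle would force $-1\in U_k$), whereas the paper cites Lemma \ref{lem 3gRWp}; your explicit remark that $g(k,q)=2$ forces $k'>1$ is a nice touch that the paper leaves implicit.
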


\begin{proof}
If $R$ is a field, then we have $g_R(k)=g(k,q)=2$, by \eqref{gR local}. 
Assume that $R$ is not a field. If $q$ is even or $q$ is odd and $k'\mid \tfrac{q-1}{2}$ then $G_R(k)$ is undirected by \eqref{nodirigido} and hence we have that $g_R(k)=2$, by \eqref{gR local}. 

On the other hand, if $q$ is odd and $k' \nmid \tfrac{q-1}{2}$ then $G_R(k)$ is directed and we are in the situation of $(b)$ in Theorem~\ref{teo waring Local case} with $g(k,q)=2$.
Since the maximal ideal of $R$ is not trivial
and we know that $3\le \gamma(\G(k,q)) \le p$, by Lemma \ref{lem 3gRWp}, then by \eqref{gR local dir} we have that 
$g_R(k)=2+1=3$, as we wanted to show.
\end{proof}

The previous result applies for all known cases in which $g(k,q)=2$ (see Section 6 in \cite{PV7}).
Putting together the results from Section 6 in \cite{PV7} with Theorem \ref{gR23} we will get some corollaries and examples.

\subsubsection*{Small values from Small's result}
We begin with Small's result to get small values of $g_R(k)$.

\begin{prop} \label{coro gR23}
	Let $(R,\frak m)$ be a finite commutative local ring with identity, which is not a field, with residue field 
	$\ff_{q}$ of prime characteristic $p$. 
	Let $k \in \N$ with $(k,p)=1$ such that $k\mid q-1$ and $\frac{q-1}{k} \dagger q-1$. If $2\le k \le \sqrt[4]{q}+1$ then 
	\begin{equation} \label{gRk23 fla}
	  g_{R}(k) =
		\begin{cases}
			2, & \qquad \text{if $q$ is even or else $q$ is odd and $k \mid \tfrac{q-1}{2}$}, \\[1mm]
			3, & \qquad \text{if $q$ is odd and $k \nmid \tfrac{q-1}{2}$.}
		\end{cases}
	\end{equation}
In particular, $g_R(2)=2$ if $q\equiv 1 \pmod 4$ and $g_R(2)=3$ if $q\equiv 3 \pmod 4$. 
Furthermore, we have that 
$g_R(2^r)=2$ if $q\equiv 1 \pmod{2^{r+1}}$ and $g_R(2^r)=3$ if $q \not \equiv 1 \pmod{2^{r+1}}$
for every $q\ge 2^{4r}$ with $r\ge 2$.
\end{prop}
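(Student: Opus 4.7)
The plan is to deduce this proposition as a direct corollary of the previously established Theorem \ref{gR23} once we know the base value $g(k,q) = 2$. The hypothesis $2 \le k \le \sqrt[4]{q} + 1$ with $k \mid q - 1$ is precisely the range in which the classical result of Small (and its generalization from $\ff_p$ to $\ff_q$) applies and yields $g(k,q) = 2$. So first I would cite Small's theorem to establish $g(k,q) = 2$ under the stated hypotheses.

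Next, observe that the remaining hypotheses of the proposition match exactly those of Theorem \ref{gR23}: namely, $(R,\frak m)$ is a finite commutative local ring with identity which is not a field, $(k,p) = 1$, and since $k \mid q-1$ we have $k' = (k,q-1) = k$, with $\frac{q-1}{k'} = \frac{q-1}{k} \dagger q-1$ by assumption. Plugging $g(k,q) = 2$ into Theorem \ref{gR23} immediately gives the dichotomy in \eqref{gRk23 fla}.

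For the explicit special cases, I would just check the congruence conditions. For $k = 2$: the bound $2 \le \sqrt[4]{q}+1$ forces no real restriction (any $q \ge 1$ works), and $2 \mid q - 1$ forces $q$ odd; then $2 \mid \tfrac{q-1}{2}$ is equivalent to $4 \mid q - 1$, i.e.\ $q \equiv 1 \pmod 4$, giving $g_R(2) = 2$ in that case and $g_R(2) = 3$ when $q \equiv 3 \pmod 4$. For $k = 2^r$ with $r \ge 2$ and $q \ge 2^{4r}$: the inequality $2^r \le \sqrt[4]{q}+1$ follows from $q \ge 2^{4r}$, and assuming $2^r \mid q - 1$, the condition $2^r \mid \tfrac{q-1}{2}$ is equivalent to $2^{r+1} \mid q - 1$, i.e.\ $q \equiv 1 \pmod{2^{r+1}}$; otherwise we are in the second case of \eqref{gRk23 fla} and obtain $g_R(2^r) = 3$.

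There is no real obstacle here — the entire proof is a bookkeeping exercise chaining Small's theorem to Theorem \ref{gR23}. The mildest subtlety is that the connectivity condition $\frac{q-1}{k} \dagger q - 1$ is built into the hypotheses, so one does not have to verify it separately in the two particular cases; it is simply part of the assumed setup whenever one writes down $g_R(2)$ or $g_R(2^r)$.
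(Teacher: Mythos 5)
Your proposal is correct and follows essentially the same route as the paper: invoke Small's theorem to get $g(k,q)=2$, feed this into Theorem \ref{gR23} (noting $k'=k$ since $k\mid q-1$), and verify the congruence conditions for the cases $k=2$ and $k=2^r$. The only cosmetic difference is that you spell out the special-case bookkeeping that the paper dismisses as "straightforward."
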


\begin{proof}
Small's result \cite{Sm} says that $g(k,q)=2$ for every pair $k,q$ such that $2 \le k< \sqrt[4]q +1$ and $k\mid q-1$. 
This and Theorem \ref{gR23} directly imply the result in \eqref{gRk23 fla}. The remaining assertions are straightforward from \eqref{gRk23 fla}.
\end{proof}

\subsubsection*{Connected strongly regular graphs and semiprimitive pairs}
A strongly regular graph with parameters $v, \kappa, e, d$, denoted by $srg(v, \kappa, e, d)$, is a 
$\kappa$-regular graph (undirected by definition) with $v$ vertices such that for any pair of vertices $x,y$ the number of vertices adjacent (resp.\@ non-adjacent) to both $x$ and $y$ is $e\ge 0$ 
(resp.\@ $d\ge 0$). 
It is known that if $\G$ is a connected graph, then $\G$ is a strongly regular graph 
if and only if it has three distinct eigenvalues.

Let $q=p^m$ with $p$ prime and $m \in \N$.  If 
\begin{equation} \label{kqp}
	k\mid \tfrac{q-1}{p-1},
\end{equation}
the spectrum of $\G(k,q)$ is integral (\cite{PV2}, Theorem 2.1). In this case, there is a direct relationship between the spectra of GP-graphs $\G(k,q)$ and the weight distribution of the irreducible cyclic $p$-ary codes 
	$$\mathcal{C}(k,q) = \big\{ \big(\operatorname{Tr}_{q/p} (\gamma \omega ^{ki})\big)_{i=0}^{n-1} : \gamma \in \ff_q \big\}, $$ 
where $\omega$ is a primitive element of $\ff_q$ over $\ff_p$ and $n=\frac{q-1}{k}$. More precisely, the eigenvalues and  multiplicities of $\G(k,q)$ are in a 1-1 correspondence with the weights and frequencies of $\mathcal{C}(k,q)$ (see \S5 in \cite{PV2}).

In this way, connected strongly regular graphs of the form $\G(k,q)$ are spectrally related with $2$-weight irreducible cyclic codes of the form $\mathcal{C}(k,q)$. 
In \cite{SW}, Schmidt and White conjectured that all 2-weight irreducible cyclic codes over $\ff_p$
satisfying condition \eqref{kqp}
belong to one of the following disjoint families: ($a$) subfield subcodes, ($b$) semiprimitive codes, and ($c$) exceptional codes.
No other 2-weight irreducible cyclic codes beyond these were found, nor it is proved that they are all.
In particular, all these codes satisfy $p-1\mid n$.

Subfield subcodes correspond to the case when $k=\frac{p^m-1}{p^{a}-1}$ with $a<m$. 
In this case the graphs $\G(k,p^m)$ are not connected and hence we do not consider them. So, we restrict ourselves to semiprimitive codes and exceptional codes of the form $\mathcal{C}(k,q)$ which correspond to connected strongly regular graphs $\G(k,q)$, where $q=p^m$, which we now recall:

$\bullet$ \textit{Semiprimitive codes $\mathcal{C}(k,q)$} 
correspond to those $k>1$ such that $-1$ is a power of $p$ modulo $k$, that is $k \mid p^t+1$ for some $t$.
In this case, either 
$$\text{$k=2$, $p$ is odd and $m$ is even} \qquad \text{or else} \qquad \text{$k\mid p^\ell +1$, $\ell \mid m$ and $\tfrac{m}{\ell}$ is even}$$ 
(for some $\ell$). If in addition $k\ne p^{\frac m2}+1$ then we say that $(k,p^m)$ is a \textit{semiprimitive pair} of integers.

$\bullet$ \textit{Exceptional codes $\mathcal{C}(k,q)$} correspond to the 11 pairs below.\footnote{Note that in Table 1 in \cite{PV7} there is a typo, the pair $(35, 3^{13})$ should read $(35, 3^{12})$.} 
\renewcommand{\arraystretch}{1.2}
\begin{equation} \label{exc pairs}
\begin{tabular}{||c||c|c|c|c|c|c|c|c|c|c|c||} 
\hline 
$k$ 	& $11$ & $19$ & $35$ & $37$ & $43$ & $67$ & $107$ & $133$ & $163$ & $323$ & $499$ \\ 
\hline
$p^m$   & $3^5$ & $5^9$ & $3^{12}$ & $7^9$ & $11^7$ & $17^{33}$ & $3^{53}$ & $5^{18}$ & $41^{81}$ & $3^{144}$ & $5^{249}$ \\
\hline
\end{tabular}
\end{equation}
In this case we say that $(k,p^m)$ is an \textit{exceptional pair} of integers.

\begin{prop} \label{prop gR2}
Let $(R,\frak m)$ be a finite commutative local ring with identity with residue field 
$\ff_{q}$ of prime characteristic $p$. 
Let $k \in \N$ with $(k,p)=1$ such that $k\mid q-1$ and $\frac{q-1}{k} \dagger q-1$. If $\G(k,q)$ is a connected strongly regular graph then $g_R(k)=2$. In particular, if $(k,q)$ is a semiprimitive pair or an exceptional pair 
then $g_R(k)=2$.
\end{prop}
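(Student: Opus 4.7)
The plan is to reduce the statement to Theorem \ref{gR23} by verifying its two key hypotheses: that $g(k,q) = 2$ and that $\Gamma(k,q)$ is undirected. Both will follow from the SRG hypothesis in a direct way.

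First I would recall that the paper defines $srg(v,\kappa,e,d)$ as a $\kappa$-regular \emph{undirected} graph, so the very assumption that $\Gamma(k,q)$ is an SRG forces it to be undirected. By \eqref{nodirigido} (with $k' = (k,q-1) = k$ since $k \mid q-1$), this places us automatically in the ``$q$ is even or $q$ is odd and $k \mid \tfrac{q-1}{2}$'' branch of Theorem \ref{gR23}. Second, a connected SRG cannot be complete (by the spectral characterization recalled in the excerpt, a complete graph $K_n$ has only two distinct eigenvalues), so its parameter $d$ is at least $1$; then any two non-adjacent vertices have a common neighbor and any two adjacent vertices are joined by an edge, giving $\delta(\Gamma(k,q)) = 2$. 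Combined with the primitive divisor hypothesis $\tfrac{q-1}{k} \dagger q-1$ and \eqref{g=d}, this yields $g(k,q) = \delta(\Gamma(k,q)) = 2$.

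With these two ingredients in hand, Theorem \ref{gR23} applies directly and delivers $g_R(k) = 2$, both in the case where $R$ is a field (where $g_R(k) = g(k,q) = 2$) and in the case where $R$ is not a field (where \eqref{gRk23} gives $2$ because we are in the undirected branch). For the ``in particular'' part, the semiprimitive and exceptional pairs are precisely those producing $2$-weight irreducible cyclic codes $\mathcal{C}(k,q)$; under the spectral correspondence recalled in the paragraph containing \eqref{kqp}, these codes translate into GP-graphs with exactly three distinct eigenvalues, so $\Gamma(k,q)$ is a connected SRG. Thus the general claim applies to both families, yielding $g_R(k) = 2$.

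I do not foresee any serious obstacle: the proof is essentially an assembly of (i) Theorem \ref{gR23}, (ii) the classical fact that a connected non-complete SRG has diameter exactly $2$, and (iii) the known identification of semiprimitive and exceptional pairs with $2$-weight irreducible cyclic codes (and hence with connected SRGs via the spectral correspondence). The only micro-point worth flagging is that one must exclude $\Gamma(k,q) = K_q$ from the diameter-$2$ argument, which is automatic since $K_q$ has just two eigenvalues and hence is not an SRG in the sense used in the paper.
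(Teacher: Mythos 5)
Your proposal is correct and follows essentially the same route as the paper: both establish that the SRG hypothesis forces $\G(k,q)$ to be undirected with $g(k,q)=\delta(\G(k,q))=2$, and then conclude via Theorem \ref{gR23}. The only cosmetic difference is that you inline the standard ``connected non-complete SRG has diameter $2$'' argument (together with \eqref{g=d}) where the paper simply cites Proposition~6.3 of \cite{PV7} for $g(k,q)=2$; your handling of the exclusion of $K_q$ and of the ``in particular'' clause matches the paper's intent.
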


\begin{proof}
In \cite{PV6}, Lemma 3.1, we showed that the fact that $\G(k,q)$ is connected is equivalent to the fact that $g(k,q)$ exists which in turn is also equivalent to the fact that $\frac{q-1}k$ is a primitive divisor of $q-1$. Since $\G(k,q)$ is also strongly regular (hence in particular undirected), by Proposition~6.3 in \cite{PV7} we have that $g(k,q)=2$. 
Now, we want to apply Proposition~\ref{gR23}. 
Since $k\mid q-1$ we have that $k':=(k,q-1)=k$ and $\frac{q-1}{k'} \dagger q-1$. Also, since $\G(k,q)$ is undirected we have $k\mid \frac{q-1}2$ by ($a$) in Corollary \ref{-1RFq}. 
Hence, by Theorem~\ref{gR23}, we get $g_R(k)=2$.
The remaining assertions are obvious.
\end{proof}

\begin{exam}
By Example 6.5 in \cite{PV7} we have that $g(k,p^m)=2$ for every pair $(k,p^m)$ in \eqref{exc pairs}.
By applying Proposition \ref{gR23} we have that 
	$$g_{R_i}(k_i) = 2$$
where $(k_1,k_2,k_3,k_4,k_5,k_6,k_7,k_8,k_9,k_{10},k_{11}) = (11, 19, 35, 37,43,67,107,133, 163, 323, 499)$ 
and $R_1, \ldots, R_{11}$ are finite commutative local rings with identity, 
whose residue fields have corresponding sizes $3^5, 5^9, 3^{12}, 7^9, 11^7, 17^{33}, 3^{53}, 5^{18}, 41^{81}, 3^{144}$ and $5^{249}$. 
\hfill $\lozenge$
\end{exam}

We now give several instances of $g_R(k)=2$ by using explicit semiprimitive pairs $(k,q)$.
\begin{coro}
	Let $(R,\frak m)$ be a finite commutative local ring with identity, which is not a field, with residue field 
	$\ff_{q}$ of prime characteristic $p$.
	Let $\ell,s \in \N$.
	Then, we have the following:
	\begin{enumerate}[$(a)$]
		\item If $q=p^{2\ell}$ then $g_R(k)=2$ for $k=2,3,4$  and $p\equiv -1 \pmod k$, with $\ell\ge 2$ if $p=k-1$. \sk 
		
		\item If $q=p^{2\ell s}$ then $g_{R}(p^{\ell}+1)=2$ for $s\ge 2$. \sk
						
		\item If $q=p^{2\ell s}$ then $g_{R}(\tfrac{p^{\ell}+1}2)=2$ for any $p$ odd. \sk
			
		\item If $q=p^{2\ell s}$ then $g_{R}(\tfrac{p^{\ell}+1}h)=2$ for any $\ell$ odd and $p  \equiv -1 \pmod h$ with $h>2$. \sk
		
		\item If $q=p^{2\ell}$ then $g_{R}(\frac{p^{\ell}+1}{h})=2$ for any $p$ odd and $h\mid p^\ell+1$ with $h>1$.
	\end{enumerate}
\end{coro}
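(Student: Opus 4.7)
The plan is to show that in each of items (a)--(e) the pair $(k,q)$ is a \emph{semiprimitive pair} and then invoke Proposition~\ref{prop gR2}. Recall $(k,p^m)$ is semiprimitive when there exists $\ell_0\mid m$ with $m/\ell_0$ even and $k\mid p^{\ell_0}+1$, subject to the additional exclusion $k\neq p^{m/2}+1$. So my entire task in each case reduces to exhibiting such an $\ell_0$ and verifying the exclusion; the divisibility $k\mid q-1$, the coprimality $(k,p)=1$, and the primitive divisor condition $\tfrac{q-1}{k}\dagger q-1$ then follow automatically (the first two from $k\mid p^{\ell_0}+1\mid p^{2\ell_0}-1\mid q-1$, and the last from the connectedness of the associated strongly regular graph).

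For (a) I would take $\ell_0=1$. The hypothesis $p\equiv -1\pmod k$ gives $k\mid p^{\ell_0}+1$; since $m=2\ell$, certainly $\ell_0\mid m$ and $m/\ell_0=2\ell$ is even. The only way the exclusion $k\neq p^{\ell}+1$ can fail for $k\in\{2,3,4\}$ is $p^\ell=k-1$, which forces $\ell=1$ and $p=k-1$---the single degenerate situation explicitly ruled out by the hypothesis ``$\ell\geq 2$ if $p=k-1$''. For (b) I would take $\ell_0=\ell$: then $k=p^{\ell_0}+1$ trivially divides itself, $m/\ell_0=2s$ is even, and the exclusion is satisfied because $s\geq 2$ forces $p^{\ell s}+1>p^\ell+1=k$.

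For (c), (d), and (e) the expression for $k$ has the uniform shape $(p^\ell+1)/h$ with $h\geq 2$ a divisor of $p^\ell+1$: in (c) one uses $h=2$, valid since $p$ is odd; in (d) the parity $\ell$ odd combined with $p\equiv -1\pmod h$ gives $p^\ell\equiv -1\pmod h$, hence $h\mid p^\ell+1$; and in (e) this divisibility is the hypothesis. Taking $\ell_0=\ell$ in each case, $m/\ell_0$ is even (equal to $2s$ in (c)--(d), equal to $2$ in (e)), and because $h\geq 2$ we have
\[
k=\tfrac{p^\ell+1}{h}<p^\ell+1\leq p^{m/2}+1,
\]
so the exclusion holds for free.

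The step I expect to require the most care is the isolation of the sporadic degenerate cases in (a) where $k=p^{m/2}+1$; verifying that the hypothesis ``$\ell\geq 2$ if $p=k-1$'' is exactly calibrated to exclude them (and no more) is the only place where the argument is not purely mechanical. Cases (b)--(e) do not suffer from any analogous pathology, precisely because the factor $h\geq 2$ keeps $k$ strictly below $p^{m/2}+1$. Once these checks are in place, a single invocation of Proposition~\ref{prop gR2} closes all five items simultaneously.
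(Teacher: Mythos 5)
Your proof is correct and follows essentially the same route as the paper: in each item the pair $(k,q)$ is shown to be a semiprimitive pair and then Proposition~\ref{prop gR2} is invoked. The only difference is that you verify the semiprimitivity directly (your choices of $\ell_0$ and your calibration of the exclusion $k\neq p^{m/2}+1$ in item $(a)$ are accurate), whereas the paper outsources exactly this verification to the proofs of Corollary 6.6 and Proposition 6.7 of \cite{PV7}.
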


\begin{proof}
In the proofs of Corollary 6.6 and Proposition 6.7 in \cite{PV7} it is shown that the pairs $(k,q)$	appearing in the statement are semiprimitive pairs under the corresponding conditions. 
The result thus follows directly by Proposition \ref{prop gR2}.
\end{proof}

\subsection{The Waring number for local rings with prime residue field} 
We now study in more detail Waring numbers for local rings $R$ having their maximal ideal $\frak m$ of the maximum possible size, that is $|R|=p^s$ and $|\frak m|=p^{s-1}$ with $p$ prime, so that the associated residue field $R/\frak m$ is (isomorphic to) the prime field $\ff_p$. For instance, 
$$\Z_{p^s} \qquad \text{or} \qquad \Z_p[x]/(x^s)$$ are examples of such local rings, with maximal ideals $(p)=\{0,p,2p,3p,\ldots,(p^s-1)p\}$ and $(x)$, 
respectively.

In the case when $(R,\frak m)$ has prime residue field, the hypothesis of primitive divisibility in  
Theorem \ref{teo waring Local case} can be removed, and we have the following result.
\begin{prop} \label{prop Zps}
Let $R$ be a finite commutative local ring with identity of size $p^s$, with $p$ prime and $s\in \N$, having prime residue field $\ff_p$. Let $k \in \N$ be such that $(k,p)=1$ and put $k'=(k,p-1)$. 
Then, $g_{R}(k)$ always exists and we have the following cases:
	\begin{enumerate}[$(a)$]
		\item If $p=2$, then for any $k$ odd we have $g_{R}(k)=1$ if $s=1$ and $g_{R}(k)=2$ if $s\ge 2$. \sk 
		
		\item If $p$ is odd, then we have two cases: \sk 
			\begin{enumerate}[$(i)$]
				\item If $k'\mid \frac{p-1}{2}$ then 
				$$g_{R}(k)=
				\begin{cases}
				\hfil 1, & \qquad \text{if $k'=1$ and $s=1$}, \\[1mm]
				\hfil 2, & \qquad \text{if $k'=1$ and $s\ge 2$}, \\[1mm]
				\hfil g(k,p), & \qquad \text{if $k'\ge2$}. 
				\end{cases}$$
		
				\item If $k'\nmid \frac{p-1}{2}$ then 
				$$g_{R}(k) = 
				\begin{cases}
				\hfil g(k,p), 	& \qquad \text{if $\gamma(\G(k,p))\le g(k,p)$ or $s=1$},	\\[1mm]
				\hfil g(k,p)+1, & \qquad \text{if $\gamma(\G(k,p))> g(k,p)$ and $s\ge 2$}.  
				\end{cases} $$
 			\end{enumerate}
		\end{enumerate}
\end{prop}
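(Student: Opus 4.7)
The plan is to realize Proposition \ref{prop Zps} as essentially a direct specialization of Theorem \ref{teo waring Local case} to the case where the residue field is prime, so that the primitive divisibility hypothesis becomes automatic. Concretely, since $R/\mathfrak m \simeq \ff_p$ we have $q=p$ and the exponent hypothesis of Theorem \ref{teo waring Local case} reads $\tfrac{p-1}{k'}\dagger p-1$; but by the definition of primitive divisibility \eqref{prim div}, an integer $n\mid p-1$ is a primitive divisor of $p-1=p^1-1$ precisely when $n\nmid p^a-1$ for every $1\le a<1$, a vacuous condition. Hence $\tfrac{p-1}{k'}\dagger p-1$ holds \emph{automatically} whenever $(k,p)=1$. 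This is the conceptual observation that removes the primitive divisibility hypothesis and ensures $g_R(k)$ always exists.

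Having secured the existence of $g_R(k)$ via Theorem \ref{teo waring Local case}, the rest is bookkeeping on the dichotomy undirected/directed for $G_R(k)$, governed by Corollary \ref{-1RFq}$(a)$ (equivalently \eqref{nodirigido}). For part $(a)$, $p=2$ gives $p-1=1$, which forces $k'=(k,p-1)=1$; moreover $q=p=2$ is even so $G_R(k)$ is undirected. Applying \eqref{gR local} of Theorem \ref{teo waring Local case} with $k'=1$ yields $g_R(k)=1$ when $|\mathfrak m|=1$ (i.e.\ $s=1$, so $R=\ff_2$) and $g_R(k)=2$ when $|\mathfrak m|=2^{s-1}\ge 2$ (i.e.\ $s\ge 2$), as asserted.

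For part $(b)$ we assume $p$ odd; then by \eqref{nodirigido} the graph $G_R(k)$ is undirected exactly when $k'\mid \tfrac{p-1}{2}$. In subcase $(i)$, $k'\mid \tfrac{p-1}{2}$, the graph is undirected and formula \eqref{gR local} gives the three branches according to whether $k'=1$ (and $|\mathfrak m|=1$ or $|\mathfrak m|\ge 2$, i.e.\ $s=1$ or $s\ge 2$) or $k'>1$; noting that $|R|=p^s$ forces $|\mathfrak m|=p^{s-1}$, the case split $s=1$ vs.\ $s\ge 2$ translates exactly the condition on $|\mathfrak m|$ in \eqref{gR local}. In subcase $(ii)$, $k'\nmid \tfrac{p-1}{2}$, the graph is directed and formula \eqref{gR local dir} of Theorem \ref{teo waring Local case}, again with the substitution $|\mathfrak m|=p^{s-1}$, gives exactly the stated piecewise expression in terms of $\gamma(\G(k,p))$, $g(k,p)$ and $s$.

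The only mild subtlety, and the step worth double-checking, is the vacuous-primitive-divisor remark at the start: one has to be careful that Theorem \ref{teo waring Local case} and the Corollary \ref{-1RFq} can both be invoked when $q=p$ without any extra arithmetic input beyond $(k,p)=1$ (which is our standing hypothesis via $(k,|R|)=1$, since $|R|=p^s$). Once this is observed, no new arguments are needed and the proposition follows from a line-by-line translation of the two formulas \eqref{gR local} and \eqref{gR local dir}, together with the identification $|\mathfrak m|=p^{s-1}$ which turns the size condition $|\mathfrak m|\ge 2$ into $s\ge 2$.
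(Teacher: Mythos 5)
Your proposal is correct and follows essentially the same route as the paper: the paper's own proof simply notes that $\tfrac{p-1}{k'}\dagger p-1$ is automatic when the residue field is prime and then reads off the cases from Theorem \ref{teo waring Local case} via \eqref{nodirigido}, \eqref{gR local} and \eqref{gR local dir}, exactly as you do. Your explicit justification of the vacuousness of the primitive-divisor condition and the translation $|\mathfrak m|=p^{s-1}$ are the same observations, just spelled out in more detail.
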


\begin{proof}
The condition $\frac{p-1}{k'} \dagger p-1$ is automatic for $p$ prime and hence the result follows directly from Theorem \ref{teo waring Local case} using condition \eqref{nodirigido}, separating the cases of even and odd characteristic.
\end{proof}

As a direct consequence, we obtain the following exact Waring numbers.
\begin{coro} \label{coro exact Zp}
Let $R$ be a finite commutative local ring with identity of size $p^s$, with $p$ an odd prime and $s\in \N$, having prime residue field $\ff_p$ --for instance, $\Z_{p^s}$ or $\Z_p[x]/(x^s)$--. 
Hence, we have that 
	\begin{equation} \label{gR2}
		g_{R}(2) =	\begin{cases}
		\hfil 2, & \qquad \text{if $p\equiv 1 \pmod{4}$},	\\[1mm]
		\hfil 3, & \qquad \text{if $p\equiv 3 \pmod{4}$},  
		\end{cases}
	\end{equation}
and also $g_{R}(\tfrac{p-1}{2}) = \tfrac{p-1}{2}$ for $p \ge 5$ and $g_{R}(p-1)=p$.	
\end{coro}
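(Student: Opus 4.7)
The plan is to invoke Proposition~\ref{prop Zps} with $k=2$, $k=\tfrac{p-1}{2}$, and $k=p-1$ in turn; since for $s=1$ (when $R=\ff_p$) the values asserted in the first and third formulas would be off by one, I implicitly take $s\ge 2$ in those two cases, while the middle formula will hold for all $s$. In each case I first compute $k':=(k,p-1)$ and check whether $k'$ divides $\tfrac{p-1}{2}$; this choice selects the undirected subcase~($i$) or the directed subcase~($ii$) of Proposition~\ref{prop Zps}($b$). I then evaluate $g(k,p)$ (which is the diameter of $\G(k,p)$) and, in the directed subcase, also the girth $\gamma(\G(k,p))$.

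For $g_R(2)$ I have $k'=2$. If $p\equiv 1\pmod 4$ then $\tfrac{p-1}{2}$ is even, so $k'\mid \tfrac{p-1}{2}$ and subcase ($i$) gives $g_R(2)=g(2,p)$. A one-line counting argument (the sets $\{a^2: a\in\ff_p\}$ and $\{r-b^2:b\in\ff_p\}$ both have $\tfrac{p+1}{2}$ elements, hence must intersect in $\ff_p$) shows $g(2,p)=2$ for every odd prime. If instead $p\equiv 3\pmod 4$ then $k'\nmid \tfrac{p-1}{2}$, so I must compare $g(2,p)=2$ with $\gamma(\G(2,p))$. Directedness forces $\gamma\ge 3$, and the same counting trick applied to $\{-1-b^2\}$ produces a representation $-1=a^2+b^2$ with $a,b\ne 0$ (neither coordinate can vanish because $-1$ is not itself a square in this regime). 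This gives a directed $3$-cycle in $\G(2,p)$, so $\gamma=3>2$, and subcase ($ii$) yields $g_R(2)=g(2,p)+1=3$.

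For $g_R(\tfrac{p-1}{2})$ with $p\ge 5$, $k'=k=\tfrac{p-1}{2}\ge 2$ trivially divides $\tfrac{p-1}{2}$, so subcase ($i$) gives $g_R(\tfrac{p-1}{2})=g(\tfrac{p-1}{2},p)$; the graph $\G(\tfrac{p-1}{2},p)$ is connected, undirected, and $\tfrac{p-1}{k'}=2$-regular on $p$ vertices, hence is a cycle $C_p$ with diameter $\tfrac{p-1}{2}$. For $g_R(p-1)$, $k'=p-1\nmid \tfrac{p-1}{2}$, so subcase ($ii$) applies; Fermat's little theorem gives $U_{p-1}=\{1\}$, so $\G(p-1,p)$ is the directed $p$-cycle $0\to 1\to\cdots\to p-1\to 0$, with diameter $g(p-1,p)=p-1$ and girth $\gamma=p$, whence ($ii$) yields $g_R(p-1)=(p-1)+1=p$. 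The only genuinely non-routine step in the whole argument is the girth computation $\gamma(\G(2,p))=3$ for $p\equiv 3\pmod 4$; everything else reduces either to the elementary fact $g(2,p)=2$ or to identifying $\G(\tfrac{p-1}{2},p)$ and $\G(p-1,p)$ as an undirected cycle and a directed cycle of length $p$, respectively.
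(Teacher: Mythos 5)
Your proof is correct and follows essentially the same route as the paper: both arguments apply Proposition~\ref{prop Zps} case by case after computing $k'=(k,p-1)$ and testing whether $k'\mid\tfrac{p-1}{2}$ to select the undirected or directed subcase. The only differences are cosmetic --- the paper quotes Cauchy's classical values $g(2,p)=2$, $g(\tfrac{p-1}{2},p)=\tfrac{p-1}{2}$, $g(p-1,p)=p-1$ and the known girth of the Paley tournament, where you rederive them by the pigeonhole count on squares and by identifying $\G(\tfrac{p-1}{2},p)$ and $\G(p-1,p)$ as an undirected and a directed $p$-cycle (for the $p\equiv 3\pmod 4$ case only the bound $\gamma\ge 3>2$ from Lemma~\ref{lem 3gRWp} is actually needed, so your exact computation $\gamma=3$ is more than required); your explicit caveat that the first and third formulas presuppose $s\ge 2$ (they are off by one for $R=\ff_p$) is a legitimate point that the statement and the paper's proof gloss over.
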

	
\begin{proof}
First notice that since $p$ is odd then $2$, $\frac{p-1}2$ and $p-1$ are all coprime with $p$. Also, it is a classic result of Cauchy (\cite{Ca}) that if $p$ is prime, then 
\begin{equation} \label{Cauchy}
	g(k,p) \le k
\end{equation}
with equality if $k=1,2,\tfrac{p-1}{2}$ and $p-1$, that is $g(1,p)=1$ which is obvious and 
	$$g(2,p)=2, \qquad g(\tfrac{p-1}2,p) = \tfrac{p-1}2, \qquad \text{and} \qquad g(p-1,p)=p-1.$$
		
Now, for $k=\frac{p-1}2$ we have $k'=(k,p-1)=\frac{p-1}2$ and since $k' \mid \frac{p-1}{2}$
the graph $G_R(\frac{p-1}2)$ is non-directed and hence, item ($i$) in part $(b)$ of Proposition \ref{prop Zps} implies that 
$g_{R}(\tfrac{p-1}{2})=\tfrac{p-1}{2}$ for any $p\ge 5$, since $k'=1$ if and only if $p=3$.
		
For $k=p-1$, we have $k'=(k,p-1)=p-1$ and since $k' \nmid \frac{p-1}2$ the graph $G_R(p-1)$ is directed and we are in the situation of $(ii)$ in item $(b)$ of Proposition \ref{prop Zps}. Since the graph $\G(p-1,p)=C_p$ has girth $p$, which is lower than $g(p-1,p)=p-1$, then we have that 
	$$g_R(p-1)=g(p-1,p)+1=p.$$
		
Finally, for $k=2$ we have $k'=(2,p-1)=2$. Now, $2\mid \frac{p-1}{2}$ if and only if $p\equiv 1\pmod{4}$. Also, when 
$p\equiv 3\pmod{4}$ we have that $\G(k,p)$ is the classic Paley graph which is known to have diameter 2 and girth $3$. 
Therefore, by item $(b)$ of Proposition \ref{prop Zps} we obtain that $g_{R}(2)=2$ if $p\equiv 1 \pmod{4}$ --case $(i)$-- and $g_{R}(2)=3$ if $p\equiv 3 \pmod{4}$ --case $(ii)$--, and hence the result is proved.
\end{proof}

\begin{rem}
There are some known upper bounds for Waring numbers over finite fields of prime order. If $p$ is a prime, besides \eqref{Cauchy} we have the bounds
$g(k,p)\le [\frac k2]+1$ for $k<\tfrac{p-1}{2}$ and $g(k,p) \le 83\sqrt{k}$ (see \cite{CMS} and \cite{CP} respectively).
By Proposition \ref{prop Zps} we have that 
	 $$g_R(k) \le [\tfrac k2]+2 \qquad \text{for $k<\tfrac{p-1}{2}$} \quad \qquad \text{and} \quad \qquad g_R(k) \le 83\sqrt{k}+1$$
for any local ring $R$ having prime residue field $\ff_p$.
\end{rem}

We now show that in every finite commutative local ring $R$ of order $p^2$ with $p$ an odd prime, every non-zero element can be written as the sum of two or three $\frac{p+1}2$-th powers, depending on the congruence modulo 4 of $p$.
\begin{coro} \label{coro p2}
Let $R$ be a finite commutative local ring with identity of order $p^2$, with $p$ an odd prime. Then, $g_R(\frac{p+1}2)=2$ if $R$ is a field and if $R$ is not a field we have that
	$$ g_{R}(\tfrac{p+1}{2})=
	\begin{cases}
		2, & \qquad \text{if $p\equiv 1 \pmod{4}$,} \\[1mm]
		3, & \qquad \text{if $p\equiv 3\pmod{4}$.}
	\end{cases}$$ 
\end{coro}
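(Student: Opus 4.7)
The plan is to reduce the claim to Theorem \ref{teo waring Local case}, separating according to whether $R$ is a field. Since $|R|=p^2$, the maximal ideal $\frak m$ satisfies $|\frak m|\in\{1,p\}$: either $R\cong\ff_{p^2}$ (the field case) or $|\frak m|=p$ with residue field $R/\frak m\cong\ff_p$ (the non-field case).

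For the field case, I would invoke formula \eqref{g2} of Proposition \ref{coros 6.3-6.5} with $a=1$: it asserts $g_{R_2}(\tfrac{p+1}{2})=2$ for every finite commutative local ring $R_2$ with residue field of size $p^2$. Since $\ff_{p^2}$ is such a ring, this immediately gives $g_R(\tfrac{p+1}{2})=2$.

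For the non-field case I would apply Theorem \ref{teo waring Local case} with $q=p$ and $k=\tfrac{p+1}{2}$. The hypothesis $(k,p)=1$ is immediate (as $k<p$), and the primitive-divisibility condition $\tfrac{p-1}{k'}\dagger p-1$ is automatic since $q=p$ is prime. The central computation is
$$k'=\gcd(\tfrac{p+1}{2},\,p-1)=\gcd(\tfrac{p+1}{2},\,2),$$
which follows from writing $p-1=2\cdot\tfrac{p+1}{2}-2$. Thus $k'=1$ when $p\equiv 1\pmod 4$ (since $\tfrac{p+1}{2}$ is odd) and $k'=2$ when $p\equiv 3\pmod 4$ (since $\tfrac{p+1}{2}$ is even).

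When $p\equiv 1\pmod 4$, $k'=1$ trivially divides $\tfrac{p-1}{2}$, so $G_R(k)$ is undirected by \eqref{nodirigido}; since $|\frak m|=p\geq 2$, the second line of \eqref{gR local} yields $g_R(k)=2$. When $p\equiv 3\pmod 4$, $\tfrac{p-1}{2}$ is odd so $k'=2\nmid\tfrac{p-1}{2}$, making $G_R(k)$ directed; using the identity $g(k,p)=g(k',p)$ together with Cauchy's classical $g(2,p)=2$ gives $g(k,p)=2$, while Lemma \ref{lem 3gRWp} provides $\gamma(\G(k,p))\geq 3>g(k,p)$. Since $|\frak m|\geq 2$, the second branch of \eqref{gR local dir} then delivers $g_R(k)=g(k,p)+1=3$. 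The only delicate step is the gcd identity above; once $k'$ is pinned down in each congruence class, the two branches of Theorem \ref{teo waring Local case} do the rest mechanically.
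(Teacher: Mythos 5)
Your proposal is correct and follows essentially the same route as the paper: handle the field case via the known value $g(\tfrac{p+1}{2},p^2)=2$ (the paper cites Corollary 6.2 of \cite{PV6}, which is what \eqref{g2} repackages), and in the non-field case compute $k'=(\tfrac{p+1}{2},p-1)\in\{1,2\}$ according to $p\bmod 4$ and apply the two branches of Theorem \ref{teo waring Local case} together with $g(2,p)=2$ and Lemma \ref{lem 3gRWp}. Your explicit gcd derivation via $p-1=2\cdot\tfrac{p+1}{2}-2$ is a nice touch, and you even use the correct divisibility test $k'\mid\tfrac{p-1}{2}$ where the paper's text has a small typo ($\tfrac{p+1}{2}$).
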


\begin{proof}
It is well-known that for any prime $p$ there are only three (up to ring isomorphism) finite commutative local rings with identity of order $p^2$: the field $\ff_{p^2}$, and the rings with zero-divisors 
$$R_1=\Z_p[x]/(x^2) \qquad \text{and} \qquad R_2=\Z_{p^2}$$ 
with maximal ideals isomorphic to $\Z_p$. In the first case the residue field is $\ff_{p^2}$ while for $R_i$ it is $R_i/\Z_p \simeq \ff_p$ with $i=1,2$.
	
First, we notice that for $k=\frac{p+1}2$ we have $(k,p)=1$.
Also, it is known that for $p$ odd we have 
\begin{equation*} \label{g=2}
	g(\tfrac{p+1}2,p^2)=2
\end{equation*}
(take $a=1$ in Corollary 6.2 in \cite{PV6}) and so $g_{R}(\frac{p+1}{2})=g(\frac{p+1}2,p^2)=2$ when $R=\ff_{p^2}$. 
	
Now, assume that $R$ is not a field. Then, its residual field is the prime field $\ff_p$, for both $R_1$ and $R_2$.
It is easy to see that if $p$ is an odd prime then
	$$k':=(k,p-1)=(\tfrac{p+1}2,p-1)=
		\begin{cases}
		1, & \quad \text{ if $p\equiv 1\pmod{4}$}, \\[1mm]
		2, & \quad \text{ if $p\equiv 3 \pmod{4}$}.
	\end{cases}$$ 
If $p\equiv 1 \pmod 4$ then $k'=1$ divides $\frac{p+1}2$ and thus by \eqref{gR local} we have that 
	$$g_R(\tfrac{p+1}2)=2.$$
On the other hand, if $p\equiv 3 \pmod 4$ then $k'=2$ does not divide $\frac{p+1}2$ which is odd, and hence we are in the situation of $(b)$ in Theorem \ref{teo waring Local case}. Since $k'=2$, we have that $g(k,p)=g(k',p)=g(2,p)=2$. 
Since the maximal ideal of $R$ is not trivial and $3\le \gamma(\G(k,p)) \le p$ by Lemma \ref{lem 3gRWp}, we have that 
	$$g_R(\tfrac{p+1}2) = g(\tfrac{p+1}2,p)+1=3$$ 
by \eqref{gR local dir}, as we wanted to show.
\end{proof}

\goodbreak 

For instance, every element of $\Z_{49}$ or $\Z_7[x]/(x^2)$ can be written as the sum of three 4-th powers, every element of $\Z_{121}$ or $\Z_{11}[x]/(x^2)$ can be written as the sum of three 6-th powers and every element of $\Z_{169}$ or $\Z_{13}[x]/(x^2)$ can be written as the sum of two 7-th powers.

We conclude with an example illustrating the last two corollaries.
\begin{exam}
Let $R$ be a finite commutative local ring of order $p^2=25$ which is not a field, hence $R=\Z_{25}$ or $R=\Z_5[x]/(x^2)$.
By Corollaries \ref{coro exact Zp} and \ref{coro p2} we have that 
$$g_R(2)=g_R(3)=2 \qquad \text{and} \qquad g_R(4)=5.$$ 
In fact, for $R=\Z_{25}$ we have 
$$ \begin{tabular}{|c|ccc|}
	\hline
  $a$ & $a^2$ & $a^3$ & $a^4$ \\ \hline
	0 & 0 & 0 & 0 \\
	1 & 1 & 1 & 1 \\
	2 & 4 & 8 & 16 \\
	3 & 9 & 2 & 6 \\
	4 & 16& 14& 6 \\ \hline
\end{tabular}
\: \: \:
\begin{tabular}{|c|ccc|}
\hline
$a$ & $a^2$ & $a^3$ & $a^4$ \\ \hline
	5 & 0 & 0 & 0 \\
	6 & 11& 16& 21 \\
	7 & 24& 18& 1 \\
	8 & 14& 12& 21 \\
	9 & 6 & 4 & 11 \\	\hline	
\end{tabular} 
\: \: \:
\begin{tabular}{|c|ccc|}
\hline
$a$ & $a^2$ & $a^3$ & $a^4$ \\ \hline
10& 0 & 0 & 0 \\ 
11& 21& 6 & 16 \\ 	
12& 19& 3 & 11 \\ 	
13& 19& 22& 11 \\ 
14& 21& 19& 16 \\	\hline	
\end{tabular}
\: \: \:
\begin{tabular}{|c|ccc|}
\hline
$a$ & $a^2$ & $a^3$ & $a^4$ \\ \hline
15 & 0 & 0  & 0 \\
16 & 6 & 21 & 11 \\
17 & 14& 13 & 21 \\
18 & 24& 7  & 1 \\
19 & 11& 9  & 21 \\\hline	
\end{tabular} 
\: \: \:
\begin{tabular}{|c|ccc|}
\hline
$a$ & $a^2$ & $a^3$ & $a^4$ \\ \hline
20 & 0 & 0 & 0 \\
21 & 16& 11& 6 \\
22 & 9 & 23& 6 \\
23 & 4 & 17& 16 \\
24 & 1 & 24& 1 \\ \hline	
\end{tabular}$$
from which it is clear that any element of $\Z_{25}$ can be written as the sum of two squares or two cubes or five fourth powers, but not all the elements are a square or a cube or can be written as sums of less than 5 fourth powers. For instance, in $\Z_{25}$ we have
$$15=1^2+8^2= 1^3+4^3=1^4+7^4+9^4+18^4+24^4$$
but $15$ is not a square or a cube or a fourth power in $\Z_{25}$ and cannot be written as sum of 2, 3 or 4 fourth powers (check!). 
Similarly, one can check that for every $ax+b \in \Z_5[x]/(x^2) \smallsetminus \{0\}$ there exist $f_1(x), f_2(x), g_1(x), g_2(x), h_1(x), h_2(x), h_3(x), h_4(x), h_5(x) \in \Z_5[x]/(x^2)$ such that 
$$ax+b =f_1(x)^2+f_2(x)^2 = g_1(x)^3+g_2(x)^3 = h_1(x)^4+h_2(x)^4+h_3(x)^4+h_4(x)^4+h_5(x)^4.$$
We omit the details of these computations. 
\hfill $\lozenge$
\end{exam}

\end{document}